\documentclass{amsart}

\usepackage{amsthm}
\usepackage{graphicx}
\usepackage{stmaryrd} 
\usepackage{tikz}
\usepackage{tikz-cd}
\usetikzlibrary{arrows}
\usepackage{ bbold }
\usepackage{verbatim}
\usepackage{amssymb,amsfonts}
\usepackage[all,arc]{xy}
\usepackage{enumerate}
\usepackage{mathrsfs, mathabx}
\usepackage{xcolor}[2007/01/21]
\usepackage{hyperref}
\usepackage{soul}
\usepackage{esint} 
\usepackage[all]{xy}
\usepackage[margin=1.0in]{geometry}
\usepackage[
   open,
   openlevel=2,
   atend
 ]{bookmark}[2011/12/02]
\usepackage{graphics}

\bookmarksetup{color=blue}

\setlength{\parindent}{0pt} 

\theoremstyle{definition} 
\newtheorem{thm}{Theorem}[section]

\newtheorem{prop}[thm]{Proposition}
\newtheorem{lem}[thm]{Lemma}
\newtheorem{conj}[thm]{Conjecture}

\newtheorem{setting}[thm]{Properties}

\theoremstyle{definition}
\newtheorem{defn}[thm]{Definition}

\newtheorem{exmp}[thm]{Example}

\newtheorem{notn}[thm]{Notation}

\newtheorem{rmk}[thm]{Remark}

\theoremstyle{definition}

\theoremstyle{definition}

\theoremstyle{remark}

\newcommand{\im}{\mathrm{im}}

\newcommand{\GL}{\mathrm{GL}}

\newcommand{\Aut}{\mathrm{Aut}}

\newcommand{\bZ}{\mathbb{Z}}

\newcommand{\bR}{\mathbb{R}}

\newcommand{\bF}{\mathbb{F}}
\newcommand{\bE}{\mathbb{E}}

\newcommand{\Zp}{\mathbb{Z}_p}
\newcommand{\bb}{\mathbb}

\newcommand{\mr}{\mathrm}
\newcommand{\mf}{\mathfrak}
\newcommand{\ms}{\mathscr}

\newcommand{\sm}{\smallsetminus}
\newcommand{\sub}{\subset}

\newcommand{\sups}{\supset}

\newcommand{\Mat}{\mathrm{Mat}}
\newcommand{\Prob}{\mathrm{Prob}}

\newcommand{\Sur}{\mathrm{Sur}}

\newcommand{\M}{\mr{M}}

\newcommand{\rr}{\mf{r}}
\newcommand{\m}{\mf{m}}

\newcommand{\llb}{\llbracket}
\newcommand{\rrb}{\rrbracket} 
 

\newcommand{\Mod}{\textbf{Mod}}


\newcommand{\ka}{\kappa}
\newcommand{\eps}{\epsilon}
\newcommand{\ep}{\epsilon}

\newcommand{\sg}{\sigma}
\newcommand{\dt}{\delta}

\newcommand{\Hom}{\mathrm{Hom}}
\newcommand{\Ext}{\mathrm{Ext}}

\newcommand{\ol}{\overline}

\newcommand{\ra}{\rightarrow}

\newcommand{\tra}{\twoheadrightarrow}

\newcommand{\lt}{\left}
\newcommand{\rt}{\right}

\newcommand{\ot}{\otimes}

\newcommand{\cok}{\mathrm{cok}}
\newcommand{\Ha}{\mathrm{Haar}}

\newcommand{\diag}{\mathrm{diag}}

\newcommand{\be}{\begin{enumerate}}
\newcommand{\ee}{\end{enumerate}}

\newcommand{\bi}{\begin{itemize}}
\newcommand{\ei}{\end{itemize}}

\newcommand{\bbm}{\begin{bmatrix}}
\newcommand{\ebm}{\end{bmatrix}}

\numberwithin{equation}{section}

\newcommand{\abs}[1]{\lvert#1\rvert}

\usepackage{mathtools}

\DeclarePairedDelimiter{\set}{\{}{\}}

\DeclarePairedDelimiter{\parens}{\lparen}{\rparen}




\newcommand\subeq{\subseteq}

\newcommand{\ls}{\leqslant}

\newcommand\bbar{\overline} 
\newcommand\hhat{\widehat} 
\newcommand\onto{\twoheadrightarrow}

\newcommand{\map}[1][]{\overset{#1}{\to}}

\newcommand{\Fq}{{\mathbb{F}_q}}
\newcommand{\Fp}{{\mathbb{F}_p}}

\newcommand{\Athat}{\hhat{A[t]}}

\DeclareMathOperator{\Tor}{Tor}

\newcommand{\fa}{{\mf a}}
\newcommand{\fb}{{\mf b}}

\newcommand{\ppmod}{\hspace{-1mm} \pmod}

\begin{document}

\title[The cokernel of a random integral matrix with concentrated residue]{The cokernel of a polynomial push-forward of a random integral matrix with concentrated residue}
\date{\today}
\author{Gilyoung Cheong and Yifeng Huang}
\address{G. Cheong -- Department of Mathematics, University of California--Irvine, 340 Rowland Hall, Irvine, California 92697, the United States of America \newline
Y. Huang -- Department of Mathematics, University of British Columbia, 1984 Mathematics Road, Vancouver, BC Canada V6T 1Z2}
\email{gilyounc@uci.edu, huangyf@math.ubc.ca}

\begin{abstract}
We prove new statistical results about the distribution of the cokernel of a random integral matrix with a concentrated residue. Given a prime $p$ and a positive integer $n$, consider a random $n \times n$ matrix $X_n$ over the ring $\bZ_p$ of $p$-adic integers whose entries are independent. Previously, Wood showed that regardless of the distribution of $X_n$, as long as each entry of $X_n$ is not too concentrated on a single residue modulo $p$, the distribution of the cokernel $\cok(X_n)$ of $X_n$, up to isomorphism, weakly converges to the Cohen--Lenstra distribution, as $n \ra \infty$. In this paper, we consider the case when $X_n$ has a concentrated residue $A_n$ so that $X_n = A_n + pB_n$, where $B_n$ is a random $n \times n$ matrix over $\bZ_p$. We show that for every fixed $n$ and a non-constant monic polynomial $P(t) \in \bZ_p[t]$, we can explicitly compute the distribution of $\cok(P(X_n))$ when $B_n$ is a Haar-random matrix. Using this, we also show that for specific choices of $A_n$ a much wider class of random matrices $B_n$ gives the same distribution of $\cok(P(X_n))$. For the Haar-random $B_n$, we deduce our result from an interesting equidistribution result for matrices over $\bZ_p[t]/(P(t))$, which we prove by establishing a version of the Weierstrass preparation theorem for the noncommutative ring $\M_n(\bZ_p)$ of $n \times n$ matrices over $\bZ_p$.
\end{abstract}

\maketitle

\section{Introduction}

\hspace{3mm} Fix a prime $p$ and consider the distribution of the cokernel $\cok(X)$ of a random $n \times n$ matrix $X$ over the ring $\bZ_p$ of $p$-adic integers, where $n \in \bZ_{\geq 1}$. We consider $X$ with $n^2$ independent entries $(X_{ij})_{1 \leq i, j \leq n}$. Writing $\M_n(R)$ to mean the set of $n \times n$ matrices over a ring $R$, we can identify $\M_{n}(\bZ_p) = \bZ_{p}^{n^2}$, and the probability measure on $\M_{n}(\bZ_p)$ is given by the product measure of the probability measures on $n^2$ copies of $\bZ_p$.

\hspace{3mm} Each independent entry $X_{ij}$ of a random matrix $X$ can be written as
\begin{equation}\label{digits}
X_{ij} = X_{i,j,0} + X_{i,j,1} p + X_{i,j,2} p^2 + \cdots
\end{equation}

whose $p$-adic digits $X_{i,j,0}, X_{i,j,1}, X_{i,j,2}, \dots$ are randomly chosen from $\{0,1,2, \dots, p-1\}$, which we may often identify as $\bF_p$, the finite field of $p$ elements.  The most natural example is when each $X_{i,j,l}$ is distributed uniformly at random, which is equivalent to saying that $X_{ij}$ is given by the Haar measure on $\bZ_p$. In \cite{FW}, Friedman and Washington computed the distribution of $\cok(X)$ of a random matrix $X \in \M_n(\bZ_p)$ whose $n^2$ independent entries $(X_{ij})_{1 \leq i, j \leq n}$ are Haar-random in $\bZ_p$. More specifically, \cite[Proposition 1]{FW} says
\begin{equation}\label{FW}
\underset{X \in \M_n(\bZ_p)^{\Ha}}{\Prob}(\cok(X) \simeq G) = \frac{1}{|\Aut(G)|}\prod_{i=1}^{n}(1 - p^{i})\prod_{j=n-r_p(G)+1}^{n}(1 - p^{-j}),
\end{equation}

as long as $n \geq r_p(G) := \dim_{\bF_p}(G/pG)$ (which otherwise gives $0$ for the probability), where $\Aut(G)$ is the automorphism group of $G$.

\begin{rmk} We shall always assume that $\M_n(\bZ_p)$ has the Borel $\sg$-algebra or the discrete $\sg$-algebra. We have used the notation $\M_n(\bZ_p)^{\Ha}$ above to indicate that each independent entry $X_{ij}$ of a random matrix $X \in \M_n(\bZ_p)^{\Ha}$ is Haar-random, which also assumes that we are using the Borel $\sg$-algebra.
\end{rmk}

\hspace{3mm} In \cite{Woo19}, Wood showed that as long as the first digit $X_{i,j,0}$ of each independent random variable $X_{ij}$ is not too concentrated on a single value in \eqref{digits}, when $n \ra \infty$, the distribution of the cokernel in \eqref{FW} is insensitive to which measure we choose on $\M_n(\bZ_p)$. More specifically, \cite[Theorem 1.2]{Woo19} says:

\begin{thm}[Wood]\label{Woo} Let $0 < \ep < 1$ be a real number, and fix a finite abelian $p$-group $G$. For each $n \in \bZ_{\geq 1}$, suppose that $\M_n(\bZ_p) = \bZ_{p}^{n^2}$ is equipped with a probability measure, where each random $X \in \M_n(\bZ_p)$ has $n^2$ independent entries, each $X_{ij}$ of which satisfies
\[\max_{a \in \bF_p}\lt(\underset{X_{ij} \in \bZ_p}{\Prob}(X_{i,j,0} = a)\rt) \leq 1 - \ep\]
in terms of the notation \eqref{digits}. Then
\[\lim_{n \ra \infty}\underset{X \in \M_n(\bZ_p)}{\Prob}(\cok(X) \simeq G) = \frac{1}{\abs{\Aut(G)}}\prod_{i=1}^{\infty}(1 - p^{-i}).\]
\end{thm}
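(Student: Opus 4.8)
The plan is to use the method of moments for random finite abelian $p$-groups. Recall that the Cohen--Lenstra distribution is the unique probability measure on isomorphism classes of finite abelian $p$-groups all of whose surjection moments equal $1$, and that it is moment-determined in the following strong sense: if a sequence of random finite abelian $p$-groups $C_n$ satisfies $\mathbb{E}[\#\Sur(C_n, H)] \to 1$ for every finite abelian $p$-group $H$, then $C_n$ converges in distribution to the Cohen--Lenstra distribution, and in particular $\Prob(C_n \simeq G) \to |\Aut(G)|^{-1}\prod_{i \ge 1}(1 - p^{-i})$ for each $G$ (Heath-Brown, Fouvry--Kl\"uners, and in the form needed here Wood and Sawin--Wood; the growth condition for determinacy holds trivially since the limiting moments are all $1$). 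So it suffices to prove, for every finite abelian $p$-group $H$,
\[\mathbb{E}\big[\#\Sur(\cok(X_n), H)\big] \xrightarrow[n\to\infty]{} 1.\]

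For the moment computation, write $\cok(X_n) = \bZ_p^n / X_n\bZ_p^n$. A surjection $\cok(X_n) \twoheadrightarrow H$ is the same datum as a surjection $\phi \colon \bZ_p^n \twoheadrightarrow H$ killing every column $C_j$ of $X_n$, and since the $n$ columns are independent,
\[\mathbb{E}\big[\#\Sur(\cok(X_n), H)\big] = \sum_{\phi \colon \bZ_p^n \twoheadrightarrow H} \ \prod_{j=1}^n \Prob\big(C_j \in \ker\phi\big).\]
I would then expand each factor by Fourier analysis on $H$:
\[\Prob\big(C_j \in \ker\phi\big) = \frac{1}{|H|}\sum_{\chi \in \widehat{H}} \ \prod_{i=1}^n \mathbb{E}\big[\chi(\phi(e_i) X_{ij})\big],\]
the trivial character contributing $|H|^{-1}$. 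Summing those trivial-character terms over all $\phi$ gives the main term $\#\Sur(\bZ_p^n, H)/|H|^n$, which tends to $1$ (reduce modulo $p$ and count surjections $\bF_p^n \to H/pH$). Everything therefore reduces to showing that the total contribution of the nontrivial characters vanishes as $n \to \infty$.

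The key local estimate is this: there is $\delta = \delta(\ep, p, H) > 0$ such that $|\mathbb{E}[\psi(X_{ij})]| \le 1 - \delta$ for all $i,j$ and every nontrivial continuous character $\psi$ of $\bZ_p$ factoring through $\bZ_p/p^e\bZ_p$, where $p^e = \exp(H)$. This is exactly where the hypothesis enters: such a $\psi$ descends to a nontrivial character of the cyclic group $\bZ/p^e$, whose kernel lies inside $p\bZ/p^e$; if $|\mathbb{E}[\psi(X_{ij})]|$ were $1$ then the law of $X_{ij} \bmod p^e$ would be supported on a single coset of $\ker\psi$, hence on a single residue of $X_{i,j,0}$ modulo $p$, contradicting $\max_a \Prob(X_{i,j,0} = a) \le 1 - \ep$. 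A compactness argument over the finitely many nontrivial characters of $\bZ/p^e$ and over the closed set of admissible laws then yields a single $\delta$. Applying this with $\psi$ equal to the composite of $\chi \in \widehat{H}$ with multiplication by $\phi(e_i)$ — which has the required bounded depth because $\chi$ has order dividing $p^e$ — gives, for each $\chi \ne 1$,
\[\Big|\prod_{i=1}^n \mathbb{E}\big[\chi(\phi(e_i) X_{ij})\big]\Big| \le (1 - \delta)^{N(\phi, \chi)}, \qquad N(\phi, \chi) := \#\{\, i : \phi(e_i) \notin \ker\chi \,\}.\]

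It remains to sum these bounds over $\phi$ and show the error tends to $0$, and this combinatorial bookkeeping is where I expect the main difficulty. The mechanism is that $N(\phi, \chi)$ is small only when all but a few of the $\phi(e_i)$ lie in the proper subgroup $\ker\chi \subsetneq H$, and such $\phi$ are exponentially rare: the number of homomorphisms $\bZ_p^n \to H$ with more than $n - m$ of the $\phi(e_i)$ inside a fixed proper subgroup is at most $\binom{n}{m}|H|^m(|H|/p)^{\,n-m}$, a vanishing fraction of $|H|^n$ once $m = o(n)$. One therefore splits the sum over $\phi$ into a "generic" part, on which every $N(\phi,\chi)$ is $\ge cn$, so that $\prod_j \Prob(C_j \in \ker\phi)$ differs from $|H|^{-n}$ by at most $|H|^{-n}\big((1 + |H|(1-\delta)^{cn})^n - 1\big)$ and, there being at most $|H|^n$ such $\phi$, the accumulated error $\to 0$; and a "degenerate" part. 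For the degenerate $\phi$ the trivial bound $\prod_j \Prob(C_j \in \ker\phi) \le 1$ is too weak, and one must instead use that if most $\phi(e_i)$ fall in a proper subgroup $H' \le H$, the equation $\phi(C_j) = 0$ decouples into a constraint in $H/H'$ on the entries of the few exceptional columns and a constraint in $H'$ on the rest, bringing in the analogous bound for the smaller group $H'$ and closing an induction on $|H|$. Making this dichotomy quantitative uniformly in $n$, and verifying that the degenerate strata contribute $o(1)$ after summation, is the crux of the argument; with it, $\mathbb{E}[\#\Sur(\cok(X_n), H)] \to 1$, and the theorem follows from the first paragraph.
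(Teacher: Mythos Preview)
This theorem is not proved in the paper: it is quoted as Wood's result \cite[Theorem 1.2]{Woo19} and serves purely as background, motivating the paper's study of the complementary regime in which each entry \emph{is} concentrated on a single residue modulo $p$. There is therefore no proof in the paper to compare your attempt against.

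For what it is worth, your outline follows the moment-method strategy that underlies \cite{Woo19}: reduce to showing $\mathbb{E}[\#\Sur(\cok(X_n),H)]\to 1$ for every finite abelian $p$-group $H$, then for each surjection $\phi$ estimate $\Prob(\phi X=0)$ and sum. Your Fourier expansion and the uniform bound $|\mathbb{E}[\psi(X_{ij})]|\le 1-\delta$ are a clean way to encode the $\epsilon$-hypothesis; Wood's own argument organizes the same information somewhat differently (through the combinatorics of which subgroups of $H$ the images $\phi(e_i)$ fall into rather than through characters explicitly), but the substance is the same. You correctly flag that the real work is the quantitative handling of the degenerate $\phi$, and you leave that as a plan rather than a proof. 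As a sketch the approach is sound; it stops exactly at the hard part.
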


\hspace{3mm} Theorem \ref{Woo} is extremely surprising in comparison to \eqref{FW} because each independent entry $X_{ij}$ is now allowed to be given the Haar measure by setting $\ep = 1 - 1/p$ or a probability measure far from the Haar measure such as the $(0,1)$-Bernoulli measure, where $X_{ij}$ takes the value of $0$ or $1$ with probability $1/2$ each, by setting $\ep = 1/2$. This is called a \textbf{universality result} because as $n \ra \infty$, multiple ways to choose measures on $X_{ij}$ do not change the result. This naturally brings the question about how much relaxation we can allow for each independent entry $X_{ij}$. 

\begin{rmk}
The right-hand side of the conclusion of Theorem \ref{Woo} defines a discrete probability distribution on the set of isomorphism classes of finite abelian $p$-groups called the \textbf{Cohen--Lentra distribution}. See \cite[\S 1]{Woo19} or \cite[\S 1]{CH} for its number-theoretic origination.
\end{rmk}

\hspace{3mm} In this paper, we investigate the complementary situation where each independent entry $X_{ij}$ of a random matrix $X$ is concentrated on a single residue modulo $p$ (i.e., $X_{i,j,0}$ is constant). This may look pathological at first. In \cite[p.384]{Woo19}, Wood notes that ``some condition that the matrix entries are not too concentrated, like [$\max_{r \in \bF_p}({\Prob}_{X_{ij} \in \bZ_p}(X_{i,j,0} = r)) \leq 1 - \ep$], is certainly necessary, since if the matrices had even two rows whose values were all $r \pmod{p}$, then [$\cok(X)$] could never be the trivial group.'' Indeed, there needs to be some condition to be imposed in order to avoid a trivial conclusion. That is, to satisfy $\cok(X) \simeq G$ for a finite abelian $p$-group $G$, we must have $\cok(\bar{X}) \simeq G/pG$, where $\bar{X}$ is the residue of $X$ modulo $p$.

\hspace{3mm} In fact, although they did not explicitly mention this, Friedman and Washington \cite[p.235]{FW} showed that there is an interesting behavior for a matrix $X \in \M_n(\bZ_p)$ with constant $X_{i,j,0}$ and uniform $X_{i,j,1}, X_{i,j,2}, \dots$. We state their result as follows:

\begin{thm}[Friedman and Washington]\label{FW2} Fix a finite abelian $p$-group $G$ and $n \in \bZ_{\geq 1}$. For any $A_n \in \M_n(\bF_p)$ such that $\cok(A_n) \simeq G/pG$, we have the following conditional probability:
\[\underset{X \in \M_n(\bZ_p)^{\Ha}}{\Prob}(\cok(X) \simeq G \mid X \equiv A_n \ppmod{p}) = \frac{p^{r_p(G)^2}\prod_{i=1}^{r}(1 - p^{-i})^2}{|\Aut(G)|},\]
where $r_p(G) := \dim_{\bF_p}(G/pG)$.
\end{thm}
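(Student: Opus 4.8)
The plan is to reduce, via a base change over $\bZ_p$, to the distribution of the cokernel of a \emph{smaller} Haar-random matrix, and then to quote \eqref{FW}. Set $r := r_p(G)$; since $\cok(A_n) \simeq G/pG \simeq \bF_p^{r}$, the matrix $A_n$ has $\bF_p$-rank $n-r$. First I would use row and column reduction over $\bF_p$ to find $\bar U, \bar V \in \GL_n(\bF_p)$ with $\bar U A_n \bar V = E$, where $E := \diag(I_{n-r}, 0) \in \M_n(\bZ_p)$ carries an $(n-r)\times(n-r)$ identity block, and lift $\bar U, \bar V$ to $U, V \in \GL_n(\bZ_p)$. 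The conditional law of $X$ given $X \equiv A_n \ppmod{p}$ is that of $\tilde A_n + pB$ with $\tilde A_n$ a fixed lift of $A_n$ and $B \in \M_n(\bZ_p)^{\Ha}$; because $B \mapsto UBV$ is a continuous automorphism of the compact group $(\M_n(\bZ_p),+)$ it preserves Haar measure, so combining this with translation invariance one checks that $UXV$ has, conditionally, the distribution of $E + pB'$ with $B' \in \M_n(\bZ_p)^{\Ha}$. Since $\cok(UXV) \simeq \cok(X)$, it suffices to compute $\Prob(\cok(E + pB') \simeq G)$ for $B'$ Haar-random.

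Second, I would split $B'$ into blocks conforming to $E$, with $B'_{11} \in \M_{n-r}(\bZ_p)$ and $B'_{22} \in \M_r(\bZ_p)$ the diagonal blocks and $B'_{12}, B'_{21}$ the off-diagonal ones. The $(1,1)$-block $I_{n-r} + pB'_{11}$ of $E+pB'$ reduces to the identity modulo $p$, hence lies in $\GL_{n-r}(\bZ_p)$, so a Schur complement computation produces block uni-triangular matrices $L, R \in \GL_n(\bZ_p)$ with
\[
L\,(E + pB')\,R = \begin{pmatrix} I_{n-r}+pB'_{11} & 0 \\ 0 & pC\end{pmatrix}, \qquad C := B'_{22} - pB'_{21}(I_{n-r}+pB'_{11})^{-1}B'_{12} \in \M_r(\bZ_p).
\]
Thus $\cok(E+pB') \simeq \cok(I_{n-r}+pB'_{11}) \oplus \cok(pC) \simeq \cok(pC)$. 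Conditioning on $(B'_{11},B'_{12},B'_{21})$ and invoking translation invariance of Haar measure on $\M_r(\bZ_p)$ once more shows $C$ is Haar-random on $\M_r(\bZ_p)$. The problem is now to compute $\Prob(\cok(pC) \simeq G)$ for $C \in \M_r(\bZ_p)^{\Ha}$.

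Third, passing to Smith normal form over $\bZ_p$ gives $\cok(pC) \simeq G$ if and only if $\cok(C) \simeq G^-$, where, writing $G \simeq \bigoplus_{i=1}^{r}\bZ/p^{\lambda_i}$ with $\lambda_1 \geq \cdots \geq \lambda_r \geq 1$ (exactly $r$ parts, since $r_p(G) = r$), we set $G^- := \bigoplus_{i=1}^{r}\bZ/p^{\lambda_i - 1}$. Applying \eqref{FW} with $r$ in place of $n$ expresses $\Prob(\cok(C) \simeq G^-)$ in terms of $|\Aut(G^-)|$, and one then substitutes the identity
\[
|\Aut(G)| = p^{r^{2}}\Bigl(\textstyle\prod_{j=1}^{r - r_p(G^-)}(1 - p^{-j})\Bigr)|\Aut(G^-)|,
\]
which follows because the classical product formula for $|\Aut(\cdot)|$ of a finite abelian $p$-group in terms of the conjugate partition, applied to $\lambda$ and to $(\lambda_1-1,\dots,\lambda_r-1)$, differs by exactly the terms attached to the first column of the Young diagram of $\lambda$ and to the parts of $\lambda$ equal to $1$. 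The product of factors $(1-p^{-j})$ for $1 \leq j \leq r - r_p(G^-)$ coming from this identity merges with the product for $r - r_p(G^-) < j \leq r$ appearing in \eqref{FW} to give $\prod_{j=1}^{r}(1-p^{-j})$, which, multiplied by the $\prod_{i=1}^{r}(1-p^{-i})$ already present in \eqref{FW}, yields the asserted value $p^{r^{2}}\prod_{i=1}^{r}(1-p^{-i})^{2}/|\Aut(G)|$ (recall $r = r_p(G)$).

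I do not expect a single deep obstacle: the conceptual heart is the Schur-complement reduction $\cok(X) \simeq \cok(pC)$ with $C$ Haar-random of size $r_p(G)$, which is short once the residue has been normalized to $E$. The step needing the most care is the bookkeeping in the last paragraph — verifying that the conjugate partition, the invariant $r_p(\cdot)$, and the automorphism count transform compatibly under $\lambda \mapsto (\lambda_1-1,\dots,\lambda_r-1)$, and that the two families of $(1-p^{-j})$-factors produced by \eqref{FW} and by the $\Aut$ identity telescope exactly as claimed.
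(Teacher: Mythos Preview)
Your argument is correct. It is, in essence, the classical Friedman--Washington argument over a DVR: normalize the residue via $\GL_n(\bF_p)$, use a Schur complement to peel off the invertible block, and reduce to an $r\times r$ Haar matrix problem, finishing with \eqref{FW} and the partition identity relating $|\Aut(G)|$ to $|\Aut(G^-)|$. Every step checks out, including the telescoping of the two $(1-p^{-j})$--products.

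The paper, however, does not argue this way. It obtains Theorem~\ref{FW2} as the special case $R=\bZ_p$, $\fa=p\bZ_p$ of Theorem~\ref{thm:fw_fiberwise}, which is in turn assembled from two ingredients valid over \emph{any} complete Noetherian local ring with finite residue field: a closed formula for $\Prob(\cok(X)\simeq_R G)$ in terms of the Betti numbers $\beta_0^R(G),\beta_1^R(G)$ (Theorem~\ref{thm:fw_general}), and the fact that the conditional probability is constant across residue classes with isomorphic cokernel (Lemma~\ref{lem:equidistribution_residue}, proved via the row--column equivalence Lemma~\ref{lem:row_col_equiv}). Your Schur--complement reduction and the identity for $|\Aut(G)|/|\Aut(G^-)|$ both lean on the Smith normal form and are thus DVR--specific; the paper explicitly notes this limitation and replaces Smith normal form by Lemma~\ref{lem:sur_homogeneity} and minimal resolutions precisely so that the argument extends to $R=\bZ_p[t]/(P(t))$ when $\bar P$ is not square-free. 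So your route is shorter and more elementary for $\bZ_p$, while the paper's route is what survives the passage to the non--DVR rings needed for Theorems~\ref{main} and~\ref{Ha'}.
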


\hspace{3mm} Theorem \ref{FW2} was first generalized by the authors in \cite{CH}, by the first author and Kaplan in \cite[Theorem 1.6]{CK}, and then by the first author, Liang, and Strand \cite[Theorem 1.3]{CLS}, all of which are special cases of the following conjecture from \cite[Conjecture 1.8]{CK}. For any commutative ring $R$, given any $R$-modules $G$ and $H$, we write $G \simeq_R H$ to mean that $G$ and $H$ are isomorphic as $R$-modules. We write $G \simeq H$ to mean $G \simeq_{\bZ} H$. We denote by $\Aut_R(G)$ the group of $R$-linear automorphisms of $G$, with which we note that $\Aut_{\bZ}(G) = \Aut(G)$. 

\begin{conj}[Cheong and Kaplan]\label{CK} Let $P(t) \in \bZ_p[t]$ be a non-constant monic square-free polynomial so that we may write $P(t) = P_1(t) \cdots P_l(t)$, where $P_j(t) \in \bZ_p[t]$ are monic polynomials whose reductions $\bar{P}_j(t)$ modulo $p$ are distinct and irreducible in $\bF_p[t]$. For any finite-sized $\bZ_{p}[t]/(P(t))$-module $G$ and $A_n \in \M_n(\bF_p)$ such that $\cok(\bar{P}(A_n)) \simeq_{\bF_p[t]} G/pG$, we must have
\[\underset{X \in \M_n(\bZ_p)^{\Ha}}{\Prob}(\cok(P(X)) \simeq_{\bZ_p[t]} G \mid X \equiv A_n \ppmod{p}) = \frac{1}{|\Aut_{\bZ_p[t]}(G)|}\prod_{j=1}^{l} p^{r_{q_j}(G)^2}\prod_{i=1}^{r_{q_j}(G)}(1 - q_j^{-i})^2,\]
where we wrote $q_j := p^{\deg(P_j)}$ with $\bF_{q_j} := \bF_p[t]/(\bar{P}_j(t))$, the finite field of $q_j$ elements, and $r_{q_j}(G) := \dim_{\bF_{q_j}}(G/pG \otimes_{\bF_p[t]} \bF_{q_j})$.
\end{conj}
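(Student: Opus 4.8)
\emph{Proof sketch.} The plan is to translate the assertion into a statement about the cokernel of a single matrix over the ring $S := \bZ_p[t]/(P(t))$, to obtain the needed distributional input from a noncommutative version of the Weierstrass preparation theorem, and then to read off the answer from the Friedman--Washington count over each local factor of $S$.

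First I would change viewpoint. The $\bZ_p[t]$-module $\bZ_p^n$ on which $t$ acts as $X$ is $\cok_{\bZ_p[t]}(tI_n - X)$, so applying $- \ot_{\bZ_p[t]} S$ yields a natural $S$-linear isomorphism $\cok(P(X)) \simeq_S \cok_S(\theta I_n - X)$, where $\theta \in S$ denotes the image of $t$ and $X$ is regarded in $\M_n(S)$ through $\bZ_p \hra S$. Since $P$ is monic and square-free with the $\bar P_j$ distinct and irreducible, the $P_j$ are pairwise coprime in $\bZ_p[t]$, so the Chinese Remainder Theorem gives $S \cong \prod_{j=1}^l S_j$ with $S_j := \bZ_p[t]/(P_j(t))$ a complete discrete valuation ring, unramified over $\bZ_p$, with uniformizer $p$ and residue field $\bF_{q_j}$. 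This decomposes $\cok_S(\theta I_n - X) \simeq \bigoplus_j \cok_{S_j}(\theta_j I_n - X)$ (with $\theta_j$ the image of $t$ in $S_j$), matches $|\Aut_{\bZ_p[t]}(G)| = \prod_j |\Aut_{S_j}(G_j)|$ under the corresponding decomposition $G = \bigoplus_j G_j$ (automorphisms of a module over the semilocal ring $S$ respect its primary decomposition), and turns the hypothesis $\cok(\bar P(A_n)) \simeq_{\bF_p[t]} G/pG$ into $\cok_{\bF_{q_j}}(\bar\theta_j I_n - A_n) \simeq_{\bF_{q_j}} G_j/pG_j$ for each $j$, of $\bF_{q_j}$-dimension $r_{q_j}(G)$. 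Hence it suffices to determine the joint law of $\bigl(\cok_{S_j}(\theta_j I_n - X)\bigr)_{j=1}^l$ as $X = A_n + pB_n$ with $B_n$ Haar-random in $\M_n(\bZ_p)$, which is exactly the conditional ensemble in the statement.

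The heart of the matter is an equidistribution theorem: although $\theta I_n - X$ ranges only over the thin coset $(\theta I_n - A_n) + p\,\M_n(\bZ_p)$ inside the (generally far larger) residue class $(\theta I_n - A_n) + p\,\M_n(S)$, for the purpose of its $S$-module cokernel it behaves exactly as a Haar-random matrix over $\M_n(S)$ subject to that residue; in particular the $l$ coordinates become independent. I would prove this by first establishing a Weierstrass preparation (and division) theorem for the noncommutative ring $\M_n(\bZ_p)$ --- roughly: an $\M_n(\bZ_p)$-valued power series whose reduction modulo $p$ is ``distinguished'' in the appropriate matrix sense factors as a unit of $\GL_n$ times a genuine polynomial matrix of the predicted degree. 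Applied to the data built from $tI_n - X$ and $P$, this lets one replace $\theta I_n - X$, up to $\GL_n(S)$ on both sides, by a distinguished polynomial matrix whose dependence on $X = A_n + pB_n$ is transparent; one then computes the resulting push-forward measure and checks that, already after reducing modulo a sufficiently high power of $p$, it is the uniform measure on the target.

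Granting the equidistribution, each coordinate $\cok_{S_j}(\theta_j I_n - X)$ is distributed as the cokernel of a Haar-random matrix over the discrete valuation ring $S_j$ conditioned to have residue $\bar\theta_j I_n - A_n$, whose $\bF_{q_j}$-cokernel is $G_j/pG_j$. The discrete-valuation-ring analogue of Theorem \ref{FW2} --- Theorem \ref{FW2} with $\bF_{q_j}$ in place of $\bF_p$, proved by the same method --- evaluates this conditional probability; multiplying over $j$ (legitimate by the independence just obtained) and bookkeeping the automorphism factors via $|\Aut_{\bZ_p[t]}(G)| = \prod_j|\Aut_{S_j}(G_j)|$ then produces the asserted identity. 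I expect the equidistribution step to be the main obstacle: pinning down the correct form of the noncommutative Weierstrass preparation theorem over $\M_n(\bZ_p)$ and extracting from it the exact push-forward measure (in particular the independence across the factors $S_j$) is where the real work lies, whereas the reductions of the first step and the Friedman--Washington count over each $S_j$ are then routine.
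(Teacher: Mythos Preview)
Your proposal is correct and follows essentially the same route as the paper: Lee's linearization $\cok(P(X))\simeq_S\cok_S(\theta I_n-X)$, Chinese Remainder decomposition of $S$, an equidistribution theorem showing the thin coset $(\theta I_n-A_n)+p\M_n(\bZ_p)$ behaves like the full residue class $(\theta I_n-A_n)+p\M_n(S)$ for cokernel purposes, a noncommutative Weierstrass preparation over $\M_n(\bZ_p)$ as the engine for that equidistribution, and finally the Friedman--Washington count over each DVR factor $S_j$. The paper in fact proves the more general Theorem~\ref{main} (arbitrary monic $P$, not necessarily square-free modulo $p$), so in place of your last step it develops a version of the Friedman--Washington formula valid over any complete Noetherian local ring with finite residue field (Theorem~\ref{thm:fw_general} and Theorem~\ref{thm:fw_fiberwise}), using minimal resolutions and Betti numbers rather than the Smith normal form; in the square-free case this collapses back to exactly the DVR computation you invoke.
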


\hspace{3mm} Note that $\cok(P(X)) = \bZ_p^n/P(X)\bZ_p^n$ has a $\bZ_p[t]/(P(t))$-module structure, whose action of $t$ is given by the left multiplication by $X$, so assuming that $G$ is a module over $\bZ_p[t]/(P(t))$ in Conjecture \ref{CK} is inevitable. A special case of our main theorem is the following:

\begin{thm} Conjecture \ref{CK} is true.
\end{thm}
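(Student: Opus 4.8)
The plan is to reduce the square-free polynomial case to a product over the local factors $P_j(t)$, and then prove each local statement by a Weierstrass-type argument that replaces $P_j(X)$ by a genuine change of coordinates on $\M_n(\bZ_p)$. First I would observe that, by the Chinese remainder theorem, $\bZ_p[t]/(P(t)) \cong \prod_j \bZ_p[t]/(P_j(t))$, and correspondingly any finite-sized module $G$ over $\bZ_p[t]/(P(t))$ decomposes as $G \cong \bigoplus_j G_j$ with $G_j$ a module over $\bZ_p[t]/(P_j(t))$; moreover $\cok(P(X)) = \bigoplus_j \cok(P_j(X))$ as $\bZ_p[t]$-modules, and the events $\{\cok(P_j(X)) \simeq_{\bZ_p[t]} G_j\}$ are governed by disjoint sets of $p$-adic digit data in a sense I would need to make precise. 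If these events are (asymptotically, or exactly after suitable bookkeeping) independent, the right-hand side of Conjecture \ref{CK}, which is already a product over $j$, factors accordingly, so it suffices to treat a single monic $P(t)$ whose reduction $\bar P(t)$ is irreducible in $\bF_p[t]$. In that case $q := p^{\deg P}$, $\bZ_p[t]/(P(t))$ is the ring of integers in an unramified extension $K/\bQ_p$ of degree $\deg P$, and $\bF_q = \bF_p[t]/(\bar P(t))$ is its residue field; the claim becomes that, conditioned on $X \equiv A_n \pmod p$ with $\cok(\bar P(A_n)) \simeq_{\bF_q} G/pG$, the $\bZ_p[t]/(P(t))$-module $\cok(P(X))$ for Haar-random $X$ is distributed like the cokernel of a Haar-random $\Mat_{r}(\mathcal O_K)$-matrix with $r = r_q(G)$, which is exactly the Friedman--Washington count \eqref{FW} over $\mathcal O_K$.

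The heart of the argument is the noncommutative Weierstrass preparation theorem for $\M_n(\bZ_p)$ announced in the abstract. Concretely, I would show that for a non-constant monic $P(t) \in \bZ_p[t]$ of degree $d$, the matrix $P(X)$ can be written as $P(X) = (X^d + \text{lower order in } X)$ and that, up to a unit factor in $\M_n(\bZ_p)$ (or an invertible change of variable on the digits of $X$), the map $X \mapsto P(X)$ pushes the Haar measure on the relevant subspace of $\M_n(\bZ_p)$ to the Haar measure on $\M_n(\bZ_p[t]/(P(t)))$ restricted to the matrices with the prescribed residue — this is the ``equidistribution result for matrices over $\bZ_p[t]/(P(t))$'' referenced in the abstract. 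Granting that equidistribution, the problem is transported to computing $\Prob(\cok(Y) \simeq_{\mathcal O_K} G)$ for $Y$ a Haar-random $\M_n(\mathcal O_K)$-matrix with $Y \equiv \bar A_n \pmod{\mathfrak p}$ where $\bar A_n$ is the image of $A_n$, and here one applies the known conditional Friedman--Washington formula (Theorem \ref{FW2}, in its form over the Dedekind ring $\mathcal O_K$ with residue field $\bF_q$): the conditional probability equals $\frac{q^{r^2}\prod_{i=1}^r(1-q^{-i})^2}{|\Aut_{\mathcal O_K}(G)|}$, which matches the single-factor version of the conjectured formula. Reassembling over $j$ via the independence established in the first step yields the full conjecture.

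The main obstacle, I expect, is the Weierstrass preparation / equidistribution step: one must make sense of ``$X^d$ dominates $P(X)$'' in the \emph{noncommutative} ring $\M_n(\bZ_p)$, where division with remainder and the usual Weierstrass factorization into a unit times a distinguished polynomial require care — $X$ and the coefficients of $P$ need not commute in the relevant sense, units of $\M_n(\bZ_p)$ are $\GL_n(\bZ_p)$ rather than $1+p\M_n(\bZ_p)$, and one must track how the measure transforms under a polynomial (not linear) change of variables. I anticipate the right formulation is to fix the residue $X \equiv A_n \pmod p$, write $X = A_n + pB$ with $B$ Haar-random, and show that $B \mapsto P(A_n + pB)$ (suitably normalized by the invertible part of $\bar P'(A_n)$ or a companion-matrix conjugation) is measure-preserving onto its image, which is an affine-linear Haar-distributed family; verifying surjectivity and the Jacobian-unimodularity of this map is where the real work lies. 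A secondary technical point is checking that the module structure — the $t$-action by left multiplication by $X$ — is correctly carried across the equidistribution, so that the isomorphism type is tracked as a $\bZ_p[t]/(P(t))$-module and not merely as an abelian group.
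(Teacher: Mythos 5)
Your outline correctly identifies three of the ingredients the paper actually uses --- a CRT reduction, a Friedman--Washington formula over a DVR, and a noncommutative Weierstrass preparation theorem --- but there are two substantive gaps that make the argument as stated fall apart.

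First, the order of operations is wrong. You want to perform the CRT factorization \emph{before} the equidistribution step, arguing that the events $\{\cok(P_j(X))\simeq G_j\}$ are ``governed by disjoint $p$-adic digit data.'' They are not: every $P_j(X)$ is a polynomial in the \emph{same} random matrix $X$, so the joint distribution of the $\cok(P_j(X))$ does not factor on the nose. The mechanism that makes the CRT reduction legitimate is Lee's linearization $\cok(P(X))\simeq_R\cok_R(X-\bar t I_n)$ with $R=\bZ_p[t]/(P(t))$, together with the equidistribution result (Theorem \ref{equi}) which says the distribution of $\cok_R(X-\bar t I_n)$ for Haar $X\in \M_n(\bZ_p)_{A_n}$ agrees with that of $\cok_R(Z)$ for Haar $Z\in \M_n(R)$ with the same residue. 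Only \emph{after} this step does the Haar measure on $\M_n(R)=\prod_j\M_n(R_j)$ literally factor as a product, giving the independence you invoke. Your ``independence established in the first step'' is therefore not established: it is the output of the hardest part of the proof, applied to $P$ itself, not something you can assume going in.

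Second, the description of the Weierstrass/equidistribution step does not match the actual mechanism, and the version you describe is unlikely to be true. You propose to show that $B\mapsto P(A_n+pB)$ is ``measure-preserving onto its image'' via Jacobian-unimodularity of a polynomial map. There is no reason for the Jacobian of a genuine polynomial map $\M_n(\bZ_p)\to\M_n(\bZ_p)$ to have unit determinant everywhere, and the paper does not attempt this. Instead, the relevant map after linearization is the \emph{affine} map $X\mapsto X-\bar tI_n$, which is trivially measure-preserving; the nontrivial content (Theorems \ref{thm:weierstrass}, \ref{thm:finweierstrass}, culminating in Lemma \ref{lem:equidist_map}) is a Weierstrass division statement in $\hhat{A[t]}$ for $A=\M_n(\bZ_p)$: for any higher-order-in-$t$ perturbation $M(t)$ there are unique $U(t)\in I_n+p\hhat{A[t]}$ and $X'\equiv X\ppmod p$ with $(X+I_nt+pt^2M(t))U(t)=X'+I_nt$. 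This produces a measure-preserving \emph{bijection} $\Phi:\M_n(\bZ_p)_{A_n}\to\M_n(\bZ_p)_{A_n}$ sending $X\mapsto X'$, which is how the ``extra'' $\bar t^i$-coefficients get absorbed. Your proposal names the right tool but misidentifies what it does, so the key step remains unproven. A more minor point: the paper actually establishes the general Theorem \ref{main} for arbitrary monic $P$ and deduces Conjecture \ref{CK} because in the square-free case the Hom/Ext$^1$ balance condition holds automatically (\cite[Lemma 2.2]{CY}); your plan, if it worked, would give a direct proof in the square-free case but would still need the generalized Friedman--Washington count over the DVR $\cO_{K_j}$ with fixed residue, which is itself a theorem requiring proof (a special case of Theorem \ref{thm:fw_fiberwise}).
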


\hspace{3mm} Our main theorem is more general than the above statement. Namely, we are able to compute the probability in the conclusion of Conjecture \ref{CK} for any monic $P(t) \in \bZ_p[t]$ without any square-free condition on its reduction $\bar{P}(t) \in \bF_p[t]$ modulo $p$. We fix a non-constant monic $P(t) \in \bZ_p[t]$ and consider the unique factorization
\begin{equation}\label{fac}
\bar{P}(t) = \bar{P}_1(t)^{m_1} \bar{P}_2(t)^{m_2} \cdots \bar{P}_l(t)^{m_l},
\end{equation}
where $\bar{P}_1(t), \bar{P}_2(t), \dots, \bar{P}_l(t)$ are distinct monic irreducible polynomials in $\bF_p[t]$ and $m_1, m_2, \dots, m_l \in \bZ_{\geq 1}$. We shall also write $d_j := \deg(\bar P_j(t))$. Given an $\bF_p[t]/(P(t))$-module $M$, we write
\[u_j(M) := \dim_{\bF_{p^{d_j}}}\lt(\bar{P}_j(t)^{m_j - 1} M_j \rt),\]

where $M_j := M  \ot_{\bF_p[t]/(\bar{P}(t))} \bF_p[t]/(\bar{P}_j(t)^{m_j})$.

\hspace{3mm} We are now ready to state one of our main theorems:
\begin{thm}\label{main} Let $n \in \bZ_{\geq 1}$. Fix a finite-sized $\bZ_p[t]/(P(t))$-module $G$ and $A_n \in \M_n(\bF_p)$ such that $\cok(\bar P(A_n)) \simeq_{\bF_p[t]} G/pG$. If $G$ satisfies
\[|\Hom_{\bZ_{p}[t]}(G, \bF_{p^{d_{j}}})| = |\Ext_{\bZ_{p}[t]/(P(t))}^{1}(G, \bF_{p^{d_{j}}})|\] 

for $1 \leq j \leq l$, then
\[\underset{X \in \M_n(\bZ_p)^{\Ha}}{\Prob}(\cok(P(X)) \simeq_{\bZ_p[t]} G \mid X \equiv A_n \ppmod{p}) = \frac{|\Aut_{\bZ_p[t]}(G/pG)|\prod_{j=1}^{l}\prod_{i=1}^{u_j(G/pG)}(1 - p^{-id_j})}{|\Aut_{\bZ_p[t]}(G)|}.\]

Otherwise, the probability is $0$.
\end{thm}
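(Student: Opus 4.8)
The plan is to recast the problem in terms of cokernels of matrices over $R := \bZ_p[t]/(P(t))$, compute the $\Hom$-moments of the distribution of $\cok(P(X_n))$, and identify that distribution from its (bounded) moments.

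First, conditioning on $X_n \equiv A_n \ppmod{p}$ amounts to writing $X_n = A_n + pB_n$ for a fixed lift $A_n \in \M_n(\bZ_p)$ with $B_n$ Haar-random over $\M_n(\bZ_p)$. Since $\cok_{\bZ_p[t]}(tI - X_n)$ is $\bZ_p^n$ with $t$ acting by $X_n$, tensoring over $\bZ_p[t]$ with $R$ gives $\cok(P(X_n)) \simeq_R \cok_R(\bar tI - X_n)$, the cokernel of $\bar tI - X_n \in \M_n(R)$, where $\bar t$ is the image of $t$ in $R$ and $X_n \in \M_n(\bZ_p) \subseteq \M_n(R)$. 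By Hensel's lemma and \eqref{fac}, $R \simeq \prod_{j=1}^l R_j$ with $R_j := \bZ_p[t]/(P_j(t))$ local, $P_j$ a monic lift of $\bar P_j^{m_j}$, and residue field $\bF_{p^{d_j}} = R_j/\m_j$; everything below decomposes over $j$.

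Next, I compute $\bE[\#\Sur_R(\cok(P(X_n)),H)]$ for finite $R$-modules $H$. An $R$-linear surjection $\cok_R(\bar tI - X_n) \to H$ is the same as a $\bZ_p$-linear surjection $\phi \colon \bZ_p^n \to H$ with $\phi X_n = t_H \phi$, where $t_H$ is multiplication by $t$ on $H$; substituting $X_n = A_n + pB_n$ turns this into $p\,\phi B_n = t_H \phi - \phi A_n$, which has a solution $B_n$ exactly when the reduction $\bar\phi \colon \bF_p^n \to H/pH$ is $\bF_p[t]$-linear --- equivalently, factors through $\cok(\bar P(A_n)) \simeq_{\bF_p[t]} G/pG$ --- and, when it does, holds with conditional probability $|H[p]|^n/|H|^n$, since $B_n \mapsto \phi B_n$ is a surjective homomorphism onto $H^n$. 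Counting the $\#\Sur_{\bF_p[t]}(G/pG, H/pH)$ admissible reductions $\bar\phi$ and their $(|H|/|H[p]|)^n$ lifts $\phi$, the powers of $|H[p]|$ cancel, leaving
\[
\bE[\#\Sur_R(\cok(P(X_n)),H)] = \#\Sur_{R/pR}(G/pG,\, H/pH)
\]
for every finite $R$-module $H$. The right-hand side depends only on $H/pH$ and is bounded uniformly in $H$, so by a moment-determinacy theorem for distributions on finite $R$-modules it determines the law of $\cok(P(X_n))$.

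It then remains to exhibit a random $R$-module with those moments and read off the formula. The natural candidate is $\cok_R(Z)$ for a random matrix $Z$ over $R$ that reduces modulo $p$ to a fixed presentation matrix of $N := G/pG$ and is otherwise Haar-random; the same surjection count shows it has the same moments, hence the same law. Computing its distribution is a Friedman--Washington-type problem over the semilocal ring $R$, and this is the main obstacle. Over a discrete valuation ring like $\bZ_p$ one uses Smith normal form, but $R_j$ need not be a principal ideal ring, carries zero-divisors when $m_j \geq 2$, and $\det(\bar tI - X_n) = \chi_{X_n}(\bar t)$ need not be a non-zero-divisor in $R_j$; the substitute is a version of the Weierstrass preparation theorem for the noncommutative ring $\M_n(\bZ_p)$, which factors $\bar tI - X_n$ over a suitable completion as a unit times a matrix whose cokernel is counted by a $\GL_{u_j}(\bF_{p^{d_j}})$-computation --- and this is where the factors $\prod_{i=1}^{u_j(N)}(1 - p^{-id_j})$ come from. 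Running this count over each $R_j$ shows that the distribution is supported exactly on the $G'$ with $G'/pG' \simeq_R N$ for which every $G' \otimes_R R_j$ has equal minimal numbers of generators and relations over $R_j$ --- and since $|\Hom_{\bZ_p[t]}(G',\bF_{p^{d_j}})|$ and $|\Ext^1_{R}(G',\bF_{p^{d_j}})|$ equal $p^{d_j}$ raised to those two numbers (using $\bF_{p^{d_j}} = R_j/\m_j$), this is precisely the hypothesis of the theorem --- and on its support the distribution equals $G' \mapsto |\Aut_R(N)|\prod_{j=1}^{l}\prod_{i=1}^{u_j(N)}(1-p^{-id_j})\,/\,|\Aut_R(G')|$, which gives the stated value at $G' = G$ and $0$ when the hypothesis fails. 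One must also verify the module-theoretic identity that the displayed constant equals $\big(\sum_{G'}|\Aut_R(G')|^{-1}\big)^{-1}$ over the admissible $G'$, so that the formula is a probability distribution; this, together with setting up and applying the matrix Weierstrass preparation theorem, is where the bulk of the work lies.
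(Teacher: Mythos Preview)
Your moment computation is correct and is a genuinely different reduction from the paper's. Writing $N=G/pG$, you show that for $X_n$ Haar-random in $\M_n(\bZ_p)_{A_n}$ the $H$-th surjection moment of $\cok(P(X_n))$ equals $\#\Sur_{R/pR}(N,H/pH)$, and the same count works verbatim for $\cok_R(Z)$ with $Z$ Haar-random in $\M_n(R)$ with residue $A_n-\bar tI_n$. Since these moments are uniformly bounded (they vanish unless $H/pH$ is a quotient of the fixed module $N$), Sawin--Wood moment determinacy applies. The paper instead establishes the equality of the two distributions pointwise, for each fixed $G$, via an equidistribution result (their Theorem~\ref{equi}); \emph{that} is where the noncommutative Weierstrass preparation theorem is used, to build a measure-preserving bijection on $\M_n(\bZ_p)_{A_n}$ carrying $X-\bar tI_n$ to something right-$\GL_n(R)$-equivalent to a general $Z$. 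Your moment argument bypasses this construction entirely. What your route buys is brevity for this reduction; what the paper's route buys is an equality of probabilities for each $G$ separately, which sidesteps the measure-theoretic caveat below.

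Where your outline goes wrong is the last paragraph. Having reduced to computing $\Prob(\cok_R(Z)\simeq_R G\mid Z\equiv J_n\bmod p)$, you say the tool is again Weierstrass preparation for $\M_n(\bZ_p)$, factoring $\bar tI-X_n$. It is not: at this stage $Z$ is already a matrix over $R$ with no distinguished $t$-expansion, and the paper handles this step (their Theorem~\ref{Ha'}) by purely commutative-algebraic counting over each local factor $R_j$. The substitutes for Smith normal form are: (i) a Friedman--Washington formula over any complete Noetherian local ring with finite residue field, expressed through the Betti numbers $b_0=\beta_0^{R_j}(G_j)$ and $b_1=\beta_1^{R_j}(G_j)$ (Theorem~\ref{thm:fw_general}), proved by counting factorizations $R_j^{n}\twoheadrightarrow M\hookrightarrow R_j^{n}$ using Nakayama and orbit--stabilizer; (ii) the fact that over a local ring any two matrices with isomorphic cokernel are row-column equivalent (Lemma~\ref{lem:row_col_equiv}), which makes the conditional probability independent of the chosen residue class; and (iii) the inequality $b_1\geq b_0$ for finite-length modules over $R_j$ (Lemma~\ref{lem:ext_geq_hom}), so that the support condition $b_0=b_1$ is exactly the $\Hom/\Ext$ equality. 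The factors $\prod_{i=1}^{u_j(N)}(1-p^{-id_j})$ then drop out of Theorem~\ref{thm:fw_fiberwise} via $b_0-b_1'=u_j(N)$ (Lemma~\ref{lem:betti_dvr_quotient}). In short: your moment step \emph{replaces} Weierstrass, and what remains is the Betti-number computation, not another application of Weierstrass.

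One genuine loose end: to invoke moment determinacy for $\cok_R(Z)$ you need it to be almost surely finite, since an infinite finitely generated $R$-module can still surject onto $H$, so the moments you computed may exceed those of the restriction of the law to finite modules. The paper's equidistribution route avoids this issue because it matches probabilities for each finite $G$ individually. In your approach you can close the gap by either checking directly that $\det(Z)$ is a non-zero-divisor in $R$ off a Haar-null set, or by verifying the combinatorial identity you flag at the end (that the displayed formula sums to $1$ over admissible $G'$), after which the $H=0$ moment forces $\Prob(\cok_R(Z)\text{ finite})=1$ and the matching goes through.
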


\hspace{3mm} In Theorem \ref{main}, we note that having $\cok(P(A_n)) \simeq_{\bF_p[t]} G/pG$ guarantees that there exists $g \in \GL_n(\bF_p)$ such that
\[A_n = g\begin{bmatrix}
J & \ast \\
0 & J'
\end{bmatrix}g^{-1}\]
in $\M_n(\bF_p)$, where $J \in \M_{n-r}(\bF_p)$ and $J' \in \M_r(\bF_p)$ with $r  = r_p(G)$ such that every eigenvalue of $J$ in $\ol{\bF_p}$ is not a root of $P(t)$, while every eigenvalue of $J'$ in $\ol{\bF_p}$ is a root of $P(t)$. Moreover, we have
\[\cok(P(A_n)) \simeq \cok\lt( P\lt(g\begin{bmatrix}
J & \ast \\
0 & J'
\end{bmatrix}g^{-1}\rt)\rt) = \cok\lt( gP\lt(\begin{bmatrix}
J & \ast \\
0 & J'
\end{bmatrix}\rt)g^{-1}\rt),\]
and for any lift $\tilde{g} \in \GL_n(\bZ_p)$ of $g$, the conjugation by $\tilde{g}$ preserves the Haar measure on $\M_n(\bZ_p)$. Thus, Theorem \ref{main} is equally strong, even if we assume that 
\begin{equation}\label{specialA}
A_n = \begin{bmatrix}
J & \ast \\
0 & J'
\end{bmatrix}
\end{equation}
with $J$ and $J'$ as above. (Most importantly, we recall that every eigenvalue of $J \in \M_{n-r}(\bF_p)$ is not a root of $P(t)$ and $r = r_p(G)$.) For this specific form of $A_n$, Theorem \ref{main} holds in a more general setting, which can be seen as a universality result:

\begin{thm}\label{main2} Let $n \in \bZ_{\geq 1}$. Fix a finite-sized $\bZ_p[t]/(P(t))$-module $G$ and $A_n \in \M_n(\bF_p)$ such that $\cok(\bar{P}(A_n)) \simeq_{\bF_p[t]} G/pG$. Suppose that $A_n$ is of the form \eqref{specialA}, and consider any probability measure on $\M_n(\bZ_p)$ such that all entries of $X$ are independent and the entries in the bottom-right $r \times r$ submatrix of $X$ follow the Haar measure. If $G$ satisfies
\[|\Hom_{\bZ_{p}[t]}(G, \bF_{p^{d_{j}}})| = |\Ext_{\bZ_{p}[t]/(P(t))}^{1}(G, \bF_{p^{d_{j}}})|\] 

for $1 \leq j \leq l$, then
\[\underset{X \in \M_n(\bZ_p)}{\Prob}(\cok(P(X)) \simeq_{\bZ_p[t]} G \mid X \equiv A_n \ppmod{p}) = \frac{|\Aut_{\bZ_p[t]}(G/pG)|\prod_{j=1}^{l}\prod_{i=1}^{u_j(G/pG)}(1 - p^{-id_j})}{|\Aut_{\bZ_p[t]}(G)|}.\]

Otherwise, the probability is $0$.
\end{thm}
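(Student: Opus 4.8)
The plan is to deduce Theorem \ref{main2} from the Haar case, Theorem \ref{main}: I will show that when $A_n$ has the block form \eqref{specialA}, the $\bZ_p[t]$-module isomorphism type of $\cok(P(X))$ depends on $X$ only through the bottom-right $r \times r$ block of a suitable $\GL_n(\bZ_p)$-conjugate of $X$, and that this block is distributed exactly as a Haar-random matrix in $\M_r(\bZ_p)$ with residue $J'$. Granting this, Theorem \ref{main} applied with $(n, A_n, G)$ replaced by $(r, J', G)$ produces the asserted formula.

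\emph{Step 1: block-triangularization over $\bZ_p$.} Write $X = \begin{bmatrix} X_{11} & X_{12} \\ pX_{21} & X_{22} \end{bmatrix}$, where $\bar X_{11} = J$, $\bar X_{22} = J'$, and the bottom-left block is divisible by $p$ because the corresponding block of $A_n$ vanishes. Since every eigenvalue of $J$ in $\ol{\bF_p}$ avoids the roots of $\bar P$ while every eigenvalue of $J'$ is one of them, $\bar X_{11}$ and $\bar X_{22}$ have disjoint spectra, so the Sylvester operator $W \mapsto W \bar X_{11} - \bar X_{22} W$ on $\M_{r \times (n-r)}(\bF_p)$ is invertible. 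Hence the quadratic equation $W X_{11} - X_{22} W - p W X_{12} W = -X_{21}$ has a unique solution $W = W(X) \in \M_{r \times (n-r)}(\bZ_p)$, obtained by Hensel iteration, each step solving a linear equation with invertible leading operator. Conjugating $X$ by $g = \begin{bmatrix} I & 0 \\ pW & I \end{bmatrix} \in \GL_n(\bZ_p)$ produces a block upper-triangular matrix whose bottom-right block is $X_{22}' := X_{22} + p W X_{12}$, with $\bar X_{22}' = J'$ and top-left block still reducing to $J$. As $P(gXg^{-1}) = g\,P(X)\,g^{-1}$, there is a $\bZ_p[t]$-linear isomorphism $\cok(P(X)) \simeq_{\bZ_p[t]} \cok(P(gXg^{-1}))$; since a polynomial of a block upper-triangular matrix is block upper-triangular with the expected diagonal blocks and $P$ of the top-left block is invertible over $\bZ_p$ (its reduction $\bar P(J)$ is invertible), computing the image of $P(gXg^{-1})$ gives $\cok(P(gXg^{-1})) \simeq_{\bZ_p[t]} \cok(P(X_{22}'))$ with $t$ acting by $X_{22}'$, hence $\cok(P(X)) \simeq_{\bZ_p[t]} \cok(P(X_{22}'))$. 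The same triangular structure over $\bF_p$ shows $\cok(\bar P(A_n)) \simeq_{\bF_p[t]} \cok(\bar P(J'))$, so the standing hypothesis becomes $\cok(\bar P(J')) \simeq_{\bF_p[t]} G/pG$.

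\emph{Step 2: the distribution of $X_{22}'$.} Condition on $X_{11}, X_{12}, X_{21}$; by independence of the entries of $X$ together with the Haar hypothesis on the bottom-right block, $X_{22}$ is then Haar on $C := \{Y \in \M_r(\bZ_p) : \bar Y = J'\}$, and $X_{22}' = \Phi(X_{22})$ where $\Phi(Y) = Y + pW(\ldots,Y)X_{12}$. I claim $\Phi \colon C \to C$ is a Haar-preserving bijection, whence $X_{22}'$ is Haar on $C$ conditionally, hence unconditionally, and the conditional law of $\cok(P(X))$ given $X \equiv A_n \pmod p$ equals the law of $\cok(P(Y))$ for $Y$ Haar-random in $\M_r(\bZ_p)$ subject to $Y \equiv J' \pmod p$. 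For bijectivity, substituting $X_{22} = X_{22}' - pWX_{12}$ into the defining equation collapses it to the identity $X_{21} = X_{22}' W - W X_{11}$, which recovers $W$, and then $X_{22}$, uniquely from $X_{22}'$ using once more that the Sylvester operator for $(\bar X_{22}', \bar X_{11})$ is invertible; this gives a two-sided inverse of $\Phi$. Differentiating the defining equation shows that $\Phi$ is $\bZ_p$-analytic with Jacobian congruent to the identity modulo $p$, hence of unit determinant everywhere; being in addition a bijection, $\Phi$ preserves Haar measure on $C$ by the $p$-adic change-of-variables formula.

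\emph{Step 3: conclusion.} Now apply Theorem \ref{main} with $(n, A_n, G)$ replaced by $(r, J', G)$: the hypothesis $\cok(\bar P(J')) \simeq_{\bF_p[t]} G/pG$ was verified in Step 1; the condition $|\Hom_{\bZ_p[t]}(G, \bF_{p^{d_j}})| = |\Ext^1_{\bZ_p[t]/(P(t))}(G, \bF_{p^{d_j}})|$ for $1 \le j \le l$ is literally the same, since it only involves $G$; and $u_j(G/pG)$, $|\Aut_{\bZ_p[t]}(G/pG)|$, $|\Aut_{\bZ_p[t]}(G)|$ are intrinsic to $G$ and $G/pG$. Hence the conditional probability equals $\frac{|\Aut_{\bZ_p[t]}(G/pG)|\prod_{j=1}^{l}\prod_{i=1}^{u_j(G/pG)}(1 - p^{-id_j})}{|\Aut_{\bZ_p[t]}(G)|}$ when that cohomological condition holds and $0$ otherwise, which is exactly the claim. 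I expect the main obstacle to be Step 2 — establishing that the conjugation-induced change of coordinates on the bottom-right block is measure-preserving, which rests on the unique solvability of the quadratic equation, the Sylvester-invertibility input used in both directions, and the unit-Jacobian computation; Steps 1 and 3 are essentially organizational once Theorem \ref{main} is in hand.
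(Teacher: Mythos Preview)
Your argument is correct and takes a genuinely different route from the paper's. The paper proves Theorem \ref{main2} by the moment method: it reduces modulo $p^k$, uses Theorem \ref{main} to show that the Haar $H$-moments $M_H$ are independent of $n$ and well-behaved in the sense of Sawin--Wood, and then proves that for any measure satisfying the hypotheses the $H$-moment equals $M_H$ by analyzing, for each surjection $F\in \Sur_R(R^n,H)$, the event $F(X-\bar t I_n)=0$ and observing that the relevant equations are determined by the bottom-right $r\times r$ block. Your argument instead performs a single block-triangularizing conjugation of $X$ and reduces the whole question to the $r\times r$ Haar case, then invokes Theorem \ref{main} once with $(n,A_n)$ replaced by $(r,J')$.

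What each approach buys: your route is shorter and more conceptual---it explains \emph{why} only the bottom-right block matters, via the Sylvester equation and the spectral disjointness of $J$ and $J'$---and it avoids the Sawin--Wood machinery entirely. The paper's moment argument is more indirect but is organized around techniques (moment matching in a diamond category) that transfer readily to other random-matrix models. One small point worth tightening in your Step~2: rather than invoking a $p$-adic change-of-variables formula, it is cleaner (and matches the spirit of the paper's Lemma \ref{lem:equidist_map}) to note that the identity $X_{22}'W - WX_{11} = X_{21}$ shows $W \bmod p^{k-1}$ depends only on $X_{22}' \bmod p^{k-1}$, so both $\Phi$ and its inverse descend to bijections of the finite sets $\{Y \bmod p^k : \bar Y = J'\}$ for every $k$, whence $\Phi$ preserves Haar measure.
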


\begin{rmk} When $P(t)$ is square-free modulo $p$ (i.e., $m_1 = m_2 = \cdots = m_l = 1$ in \eqref{fac}), the condition 
\[|\Hom_{\bZ_{p}[t]}(G, \bF_{p^{d_{j}}})| = |\Ext_{\bZ_{p}[t]/(P(t))}^{1}(G, \bF_{p^{d_{j}}})|,\] 
is always satisfied for all $1 \leq j \leq l$ by \cite[Lemma 2.2]{CY}. This is why in Conjecture \ref{CK} such conditions were not visible. The following proposition explains more about what happens in general:
\end{rmk}

\begin{prop}\label{prep} Let $n \in \bZ_{\geq 1}$. Fix a finite-sized module $G$ over $\bZ_{p}[t]/(P(t))$ and $A_n \in \M_n(\bF_p)$ such that $\cok(P(A_n)) \simeq_{\bF_p[t]} G/pG$. Then the following are equivalent:
\be
	\item There exists $X \in \M_n(\bZ_p)$ such that $\cok(P(X)) \simeq_{\bZ_{p}[t]} G$ and $X \equiv A_n \ppmod{p}$.
	\item We have $|\Hom_{\bZ_{p}[t]}(G, \bF_{p^{d_{j}}})| = |\Ext_{\bZ_{p}[t]/(P(t))}^{1}(G, \bF_{p^{d_{j}}})|$ for $1 \leq j \leq l$.
\ee
\end{prop}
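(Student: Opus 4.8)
The plan is to prove the two implications $(1)\Rightarrow(2)$ and $(2)\Rightarrow(1)$ by quite different routes: $(1)\Rightarrow(2)$ by a short homological computation over $S:=\bZ_p[t]/(P(t))$, and $(2)\Rightarrow(1)$ by reading the conclusion off from Theorem \ref{main}.

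For $(1)\Rightarrow(2)$, I would start from a matrix $X\in\M_n(\bZ_p)$ with $X\equiv A_n\ppmod{p}$ and $\cok(P(X))\simeq_{\bZ_p[t]}G$. Since $G$ is finite, $P(X)$ is injective on $\bZ_p^n$, equivalently invertible over $\bQ_p$. Writing $M_X$ for $\bZ_p^n$ with $t$ acting as $X$, I would use the standard presentation
\[0\to\bZ_p[t]^n\xrightarrow{tI-X}\bZ_p[t]^n\to M_X\to0,\]
which is exact because $\det(tI-X)=\chi_X(t)\ne0$, and tensor it over $\bZ_p[t]$ with $S$. The right-hand term becomes $M_X\otimes_{\bZ_p[t]}S=M_X/P(X)M_X=\cok(P(X))\simeq G$, and the crucial step is that the tensored complex stays left-exact, i.e.\ that $tI-X$ is injective on $S^n$: since $S$ is $\bZ_p$-torsion free this reduces to invertibility of $tI-X$ over $\bQ_p[t]/(P(t))$, which holds iff $\gcd_{\bQ_p}(\chi_X,P)=1$ --- exactly the condition that $P(X)$ be invertible over $\bQ_p$, which we have. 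Hence $0\to S^n\xrightarrow{tI-X}S^n\to G\to0$ is an $S$-free resolution of $G$ of length $\le1$. Now I would fix $j$, set $F_j:=\bF_{p^{d_j}}=\bF_p[t]/(\bar P_j(t))$ with the $S$-module structure coming from $S\twoheadrightarrow\bF_p[t]/(\bar P_j(t))$, and note $\Hom_{\bZ_p[t]}(G,F_j)=\Hom_S(G,F_j)$ and $\Ext^1_{\bZ_p[t]/(P(t))}(G,F_j)=\Ext^1_S(G,F_j)$. Applying $\Hom_S(-,F_j)$ to the resolution and using $\Ext^1_S(S^n,-)=0$ gives an exact sequence
\[0\to\Hom_S(G,F_j)\to F_j^n\xrightarrow{\theta_j I-A_n^T}F_j^n\to\Ext^1_S(G,F_j)\to0,\]
where $\theta_j\in F_j$ is the image of $t$. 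The middle map is multiplication by a single square matrix over the field $F_j$, so its kernel and cokernel have equal $F_j$-dimension, whence $|\Hom_{\bZ_p[t]}(G,\bF_{p^{d_j}})|=|\Ext^1_{\bZ_p[t]/(P(t))}(G,\bF_{p^{d_j}})|$. This is $(2)$.

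For $(2)\Rightarrow(1)$, I would invoke Theorem \ref{main}, whose hypotheses are precisely the standing assumption $\cok(\bar P(A_n))\simeq_{\bF_p[t]}G/pG$ together with $(2)$. It evaluates
\[\underset{X\in\M_n(\bZ_p)^{\Ha}}{\Prob}(\cok(P(X))\simeq_{\bZ_p[t]}G\mid X\equiv A_n\ppmod{p})=\frac{|\Aut_{\bZ_p[t]}(G/pG)|\prod_{j=1}^{l}\prod_{i=1}^{u_j(G/pG)}(1-p^{-id_j})}{|\Aut_{\bZ_p[t]}(G)|},\]
and every factor on the right is strictly positive (the automorphism groups are finite and nonzero, each $1-p^{-id_j}$ lies in $(0,1)$, and empty products equal $1$). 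A measurable event of positive probability cannot be empty, so there exists $X$ with $X\equiv A_n\ppmod{p}$ and $\cok(P(X))\simeq_{\bZ_p[t]}G$, which is $(1)$.

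The argument is short once Theorem \ref{main} is available; the one step that will require a moment's care is, within $(1)\Rightarrow(2)$, the injectivity of $tI-X$ after base change to $S$, since this is exactly where the finiteness of $G$ enters and it is what pins the $S$-resolution of $G$ to length $1$, making $\Hom_S(G,\bF_{p^{d_j}})$ and $\Ext^1_S(G,\bF_{p^{d_j}})$ the kernel and cokernel of one and the same square matrix. The main obstacle, if one wanted a proof of $(2)\Rightarrow(1)$ that does not cite Theorem \ref{main}, would be to construct $X$ by hand: condition $(2)$ forces the minimal $S$-presentation of $G$ to be square at each maximal ideal, and one must realize such a presentation as $\cok(P(X))$ for a suitable $X\equiv A_n\ppmod{p}$, which is exactly the lifting accomplished by the noncommutative Weierstrass preparation theorem behind Theorem \ref{main}.
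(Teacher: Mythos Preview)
Your proof is correct. For $(2)\Rightarrow(1)$ you and the paper argue identically: both deduce existence from the strict positivity of the conditional probability, you by citing Theorem~\ref{main} directly, the paper by citing its linearized constituents Theorems~\ref{Ha'} and~\ref{equi}. Since the paper's proof of Theorem~\ref{main} nowhere invokes Proposition~\ref{prep}, there is no circularity in your appeal.

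For $(1)\Rightarrow(2)$ your route is genuinely different and more self-contained. The paper passes (via Lee's linearization) to a square matrix $Z \in \M_n(S)$ with $\cok_S(Z) \simeq_S G$ and then combines two separate Betti-number inequalities: Lemma~\ref{lem:betti_bound} (the ``only if'' direction of Theorem~\ref{thm:fw_general}) extracts $\beta_0^S(G) \geq \beta_1^S(G)$ from the mere existence of a square presentation, while Lemma~\ref{lem:ext_geq_hom} supplies the reverse inequality via a structural argument about $S$ that locates a field quotient through an irreducible factor of $P$. You instead observe that, because $G$ is finite and $S^n$ is $\bZ_p$-torsion-free, the map $tI - X$ is actually \emph{injective} on $S^n$, so you obtain a length-one free $S$-resolution $0 \to S^n \to S^n \to G \to 0$ outright; rank--nullity for the induced square matrix over each residue field $\bF_{p^{d_j}}$ then yields $|\Hom| = |\Ext^1|$ in one stroke, bypassing Lemma~\ref{lem:ext_geq_hom} entirely. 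Your argument is shorter and uses less machinery; the paper's has the advantage of isolating Lemma~\ref{lem:ext_geq_hom} as a structural fact about $\bZ_p[t]/(P(t))$ of independent interest.
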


\hspace{3mm} Theorem \ref{main} implies the Haar measure case of the following theorem of the first author and Yu, whose special case (with Haar measure, assuming $\bar{P}(t) \in \bF_p[t]$ is square-free) was first proved by Lee \cite{LeeA}:

\begin{thm}[Cheong--Yu]\label{CY} Let $0 < \ep < 1$ be a real number, and fix a finite-sized module $G$ over $\bZ_{p}[t]/(P(t))$. For each $n \in \bZ_{\geq 1}$, suppose that $\M_n(\bZ_p) = \bZ_{p}^{n^2}$ is equipped with a probability measure, where each random $X \in \M_n(\bZ_p)$ has $n^2$ independent entries, each $X_{ij}$ of which satisfies
\[\max_{a \in \bF_p}\lt(\underset{X_{ij} \in \bZ_p}{\Prob}(X_{i,j,0} = a)\rt) \leq 1 - \ep,\]
in terms of the notation \eqref{digits}. If $G$ satisfies
\[|\Hom_{\bZ_{p}[t]}(G, \bF_{p^{d_{j}}})| = |\Ext_{\bZ_{p}[t]/(P(t))}^{1}(G, \bF_{p^{d_{j}}})|\] 

for $1 \leq j \leq l$, then

\[\lim_{n \ra \infty}\underset{X \in \M_{n}(\bZ_{p})}{\Prob}(\cok(P(X)) \simeq_{\bZ_{p}[t]} G)
= \dfrac{1}{|\Aut_{\bZ_{p}[t]}(G)|}\displaystyle\prod_{j=1}^{l}\displaystyle\prod_{i=1}^{\infty}\lt(1 - p^{-id_{j}}\rt).\]
Otherwise the limit is $0$.
\end{thm}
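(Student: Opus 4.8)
The plan is to prove the Haar-measure case directly from Theorem \ref{main} by conditioning on the reduction of $X$ modulo $p$, and then to obtain the general non-concentrated case by the moment method in the spirit of \cite{Woo19}; the first part is the one that uses the new input of this paper.

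First, for the Haar case, observe that if $X$ is Haar on $\M_n(\bZ_p)$ then $\bar{X}$ is uniform on $\M_n(\bF_p)$ and, conditionally on $\bar{X} = A_n$, the matrix $B_n$ defined by $X = A_n + pB_n$ is again Haar; hence the conditional distribution of $\cok(P(X))$ given $X \equiv A_n \ppmod{p}$ is precisely the one computed in Theorem \ref{main}. If the hypothesis $|\Hom_{\bZ_p[t]}(G, \bF_{p^{d_j}})| = |\Ext^1_{\bZ_p[t]/(P(t))}(G, \bF_{p^{d_j}})|$ fails for some $j$, then Proposition \ref{prep} gives $\Prob(\cok(P(X)) \simeq_{\bZ_p[t]} G) = 0$ for every $n$ and every admissible product measure, so the limit is $0$. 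Otherwise, put $M := G/pG$; since $\cok(P(X))/p\,\cok(P(X)) \simeq_{\bF_p[t]} \cok(\bar{P}(\bar{X}))$, the conditional probability $\Prob(\cok(P(X)) \simeq_{\bZ_p[t]} G \mid X \equiv A_n \ppmod{p})$ vanishes unless $\cok(\bar{P}(A_n)) \simeq_{\bF_p[t]} M$, in which case Theorem \ref{main} evaluates it to the constant
\[c_G := \frac{|\Aut_{\bZ_p[t]}(M)|\,\prod_{j=1}^{l}\prod_{i=1}^{u_j(M)}(1 - p^{-id_j})}{|\Aut_{\bZ_p[t]}(G)|}.\]
Averaging over the uniform $\bar{X}$ collapses the total-probability formula to
\[\Prob_{X \in \M_n(\bZ_p)^{\Ha}}(\cok(P(X)) \simeq_{\bZ_p[t]} G) = c_G \cdot \Prob_{\bar{X} \in \M_n(\bF_p)}\bigl(\cok(\bar{P}(\bar{X})) \simeq_{\bF_p[t]} M\bigr).\]

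It then remains to identify the limit of the finite-field factor. Since $\bF_p[t]/(\bar{P}(t))$ is Artinian, this can be obtained from the (now elementary) finite-field analogue of the moment argument below, or quoted from the literature on cokernels of random matrices over finite rings; either way,
\[\lim_{n \to \infty}\Prob_{\bar{X} \in \M_n(\bF_p)}\bigl(\cok(\bar{P}(\bar{X})) \simeq_{\bF_p[t]} M\bigr) = \frac{1}{|\Aut_{\bF_p[t]}(M)|}\prod_{j=1}^{l}\prod_{i=u_j(M)+1}^{\infty}(1 - p^{-id_j}).\]
As $|\Aut_{\bF_p[t]}(M)| = |\Aut_{\bZ_p[t]}(M)|$, multiplying by $c_G$ and telescoping $\prod_{i=1}^{u_j(M)}(1 - p^{-id_j})\prod_{i=u_j(M)+1}^{\infty}(1 - p^{-id_j}) = \prod_{i=1}^{\infty}(1 - p^{-id_j})$ gives exactly $|\Aut_{\bZ_p[t]}(G)|^{-1}\prod_{j=1}^{l}\prod_{i=1}^{\infty}(1 - p^{-id_j})$, which is the Haar case.

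For an arbitrary non-concentrated product measure, conditioning on $\bar{X}$ is of no use: the conditional law of $B_n$ need not be Haar, and $\Prob(\bar{X} = A_n)$ can even vanish for the residues that matter. Instead I would run the moment method. For a finite $\bZ_p[t]/(P(t))$-module $H$, a $\bZ_p[t]$-linear surjection $\cok(P(X)) \onto H$ is the same as a surjective $\bZ_p$-linear map $\phi \colon \bZ_p^n \to H$ with $\phi(X e_i) = t \cdot \phi(e_i)$ for all $i$ (the condition $P(X)\bZ_p^n \subseteq \ker\phi$ holds automatically because $P(t)$ annihilates $H$), so
\[\bE\bigl[\#\Sur_{\bZ_p[t]}(\cok(P(X)), H)\bigr] = \sum_{\phi} \prod_{i=1}^{n} \Prob\bigl(\phi(X e_i) = t \cdot \phi(e_i)\bigr),\]
the product running over the independent columns of $X$ and the sum over the $\bZ_p$-linear surjections $\phi$. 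Wood-type equidistribution estimates \cite{Woo19} for the reductions of $\phi$ modulo powers of $p$ — which is where the bound $\max_a \Prob(X_{i,j,0} = a) \le 1 - \ep$ enters — show that each such moment converges, as $n \to \infty$, to a limit that does not depend on the measure; taking the Haar measure as one admissible choice then identifies this universal limit with the $H$-moment of the distribution found above. Since the relevant moments grow slowly enough to pin down a unique probability measure (by the moment-uniqueness theorem of Sawin and Wood), the general limiting distribution coincides with the Haar-case one, which is the asserted formula. The main obstacle is this moment-universality step: controlling $\Prob(\phi(X e_i) = h)$ uniformly in $\phi$, in $h \in H$, and in the measure while carrying along the $t$-equivariance constraint — in effect, redoing Wood's surjection count over the ring $\bZ_p[t]/(P(t))$ in place of $\bZ_p$.
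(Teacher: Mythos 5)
This theorem is not proved in the paper you are reading: it is quoted as Theorem~\ref{CY} and attributed to the companion work \cite{CY} of Cheong--Yu, with the paper merely remarking (without proof) that Theorem~\ref{main} implies its Haar-measure case. Your proposal is therefore not a reconstruction of an argument in this paper, but an independent attempt to prove the quoted result, and it has two real gaps.

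The first gap is the finite-field limit. Your Haar-case reduction via conditioning on $\bar X$ is correct: Haar on $\M_n(\bZ_p)$ does give uniform $\bar X$ on $\M_n(\bF_p)$ with Haar $B_n$ conditionally, so Theorem~\ref{main} together with the total-probability identity collapses the question to evaluating $\lim_{n}\Prob_{\bar X \in \M_n(\bF_p)}(\cok(\bar P(\bar X)) \simeq_{\bF_p[t]} G/pG)$. But the formula you then write down,
\[\frac{1}{|\Aut_{\bF_p[t]}(M)|}\prod_{j=1}^{l}\prod_{i=u_j(M)+1}^{\infty}(1 - p^{-id_j}),\]
is neither proved nor cited to a specific source. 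It checks out in the case $\bar P(t)=t$ (it reproduces the classical corank distribution of a uniform $\bF_p$-matrix), but for non-square-free $\bar P$ one is looking at the push-forward of the Fulman--Stong conjugacy-class distribution under reduction modulo $\bar P_j^{m_j}$, and the formula requires a genuine computation (or a version of Theorem~\ref{main} over $\bF_p$ in place of $\bZ_p$ plus the $\bF_p$-matrix conjugacy-class asymptotics). Asserting it ``can be obtained from the finite-field analogue of the moment argument below, or quoted from the literature'' does not close this step, particularly since the moment argument you invoke is itself the second gap. Note also that Lee's linearization puts you in the ring $\bF_p[t]/(\bar P(t))$, but $\bar X - \bar t I_n$ is emphatically not Haar in $\M_n(\bF_p[t]/(\bar P(t)))$, so one cannot simply invoke Theorem~\ref{thm:fw_general} over that ring; this is exactly why the paper developed Theorem~\ref{equi} over $\bZ_p$, and the $\bF_p$ analogue of that equidistribution is trivial (no $p$-depth to exploit), which is what makes the $\bF_p$ limit genuinely different in character.

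The second gap you acknowledge yourself: the entire non-Haar case is reduced to a moment-universality estimate that you describe but do not carry out. The setup
\[\bE\bigl[\#\Sur_{\bZ_p[t]}(\cok(P(X)), H)\bigr] = \sum_{\phi \in \Sur_{\bZ_p}(\bZ_p^n,H)} \prod_{i=1}^{n} \Prob\bigl(\phi(X e_i) = t \cdot \phi(e_i)\bigr)\]
is correct, and the stratification of surjections by how ``spread out'' $\phi$ is, together with Wood's Fourier/entropy estimates, is indeed how \cite{CY} closes this step. But calling this ``the main obstacle'' and stopping there leaves the universality half of the theorem unproved. For the record, this universality argument is the actual content of \cite{CY}, not of the present paper; the present paper's new contributions (Theorems~\ref{main}, \ref{main2}, \ref{equi}, the noncommutative Weierstrass preparation) are logically downstream of having a version of Theorem~\ref{CY} available, and the paper's own universality result (Theorem~\ref{main2}) is in the concentrated-residue regime, which is orthogonal to the non-concentrated regime of Theorem~\ref{CY}.
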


\begin{rmk} It turns out that random matrices $X$ with concentrated residue $A_n$ gives many constraints on the entries, and essentially, Theorem \ref{main2} is the best possible result one may hope for their universality. For example, consider the case $P(t) = t$ and $A_n = \diag(1,1, \dots, 1, 0)$, the $n \times n$ diagonal entries with $(0,1)$-diagonal entries with one $0$ entry. If we consider $X = A_n + pB$ with $B \in \M_n(\bZ_p)$, then for any odd $p$, if the $(n,n)$-entry of $B$ never takes $0$, then the conclusion of Theorem \ref{main2} does not hold. (More examples and counterexamples can be made from the arguments used in the proof of Theorem \ref{main2}, which is at the end of this paper.)
\end{rmk}

\subsection{Relevance to past and future works} The first special case of Theorem \ref{main} with $P(t) = t$ was shown by Friedman and Washington, as stated in Theorem \ref{FW2}. When $P(t)$ is square-free modulo $p$, Theorem \ref{main} was partially proven by the authors \cite[Lemma 5.2]{CH}, the first author and Kaplan \cite[Theorem 1.6]{CK} for $d_1, \dots, d_l \leq 2$, and the first author, Liang, and Strand \cite[Theorem 1.3]{CLS} for $l = 1$. Assuming that $P(t)$ is square-free modulo $p$ makes the problem more accessible because then the ring $\bZ_p[t]/(P(t))$ is a finite product of DVRs, and one of our contributions is to get around this difficulty for a general monic polynomial $P(t) \in \bZ_p[t]$, where the ring $\bZ_p[t]/(P(t))$ is much more complicated.

\hspace{3mm} The first universality result for random integral matrices appears in Wood's breakthrough \cite[Theorem 1.3]{Woo17} for symmetric $\bZ_p$-matrices, which generalizes its Haar measure version proven by Clancy, Kaplan, Leake, Payne, and Wood \cite[Theorem 2, summing over all the parings]{CKLPW}. Ever since, her techniques have been used to extend many results about about Haar-random $\bZ_p$-matrices to random $\bZ_p$-matrices each of whose independent entry is not too concentrated on a single residue modulo $p$ (i.e., $X_{i,j,0}$ in \eqref{digits} is not too concentrated on a single value). For example, universality results from \cite{CY}, \cite{NV}, \cite{Woo17}, and \cite{Woo19} generalize Haar measusre results from \cite{LeeA}, \cite{Van}, \cite{CKLPW}, and \cite{FW}, respectively.

\hspace{3mm} Several authors \cite{FW, CH, CK, CLS} have studied properties of random $X \in \M_n(\bZ_p)$ when $X_{i,j,0}$ is constant, but all the other $p$-adic digits $X_{i,j.1}, X_{i,j,2}$, and so on in \eqref{digits} are given the uniform distribution. Theorem \ref{main2} provides the first universality result with $X_{i,j,0}$ being constant as it allows us to choose any distributions for all the other $p$-adic digits, as long as $A_n$ has a specific form in \eqref{specialA} and the bottom-right $r_p(G) \times r_p(G)$ submatrix of $X$ follows the Haar measure. This seems to be the best universality result that we may hope for in this concentrated residue setting.

\hspace{3mm} Our work opens up numerous questions about the behavior of random integral matrices with fixed residue. To begin with, we may ask about analogues of Theorems \ref{main} and \ref{main2} for different random matrix models such as symmetric matrices or skew-symmetric matrices. We may ask about the concentrated residue version for \cite{NV}, which deals with the cokernel of product of $\bZ_p$-matrices. We may ask about the concentrated residue version for \cite{LeeB}, which deals with the cokernel of Hermitian matrices over a quadratic extension of $\bZ_p$.

\subsection{Methodology and brief outline of the paper} The majority of the work is going into proving Theorem \ref{main}. We go through a series of reductions from \S \ref{HaStart} to \S \ref{HaEnd} for this. We shall see that behind this, there is an interesting equidistibution result (Theorem \ref{equi}) for matrices over $\bZ_p[t]/(P(t))$, which we eventually prove by establishing a noncommutative version of the Weierstrass preparation theorem for the matrix ring $\M_n(\bZ_p)$ (Theorems \ref{thm:weierstrass} and \ref{thm:finweierstrass}). Then to prove Theorem \ref{main2}, we use the strategy to compute the moments (discussed in \S \ref{red}) of the distribution of $\cok(P(X))$ to determine the distribution. One of the major difficulties in our work in comparison to previous works is that each moment of our distribution cannot be explicitly written. We deal with this difficulty by using Theorem \ref{main}, to observe (in \S \ref{ind}) to get a candidate for the moment $M_H$ only depending on a fixed module $H$ over a suitable ring.

\section{Proof of Theorem \ref{main} from an equidistribution result} \label{HaStart}

\hspace{3mm} From this section to \S \ref{HaEnd}, we prove Proposition \ref{prep} and Theorem \ref{main}. Given any $A_n \in \M_n(\bF_p)$, we shall write
\[\M_n(\bZ_p)_{A_n} := \{X \in \M_n(\bZ_p) : X \equiv A_n \ppmod{p}\}\]
so that 
\[\underset{X \in \M_n(\bZ_p)}{\Prob}(\cok(P(X)) \simeq_{\bZ_p[t]} G \mid X \equiv A_n \ppmod{p}) = \underset{X \in \M_n(\bZ_p)_{A_n}}{\Prob}(\cok(P(X)) \simeq_{\bZ_p[t]} G).\]
That is, we consider $\M_n(\bZ_p)_{A_n}$ as the sample space instead of mentioning conditional probabilities for the statement of Theorem \ref{main}. The \textbf{Haar measure} on $\M_n(\bZ_p)_{A_n}$ is defined to be the probability measure induced by the Haar measure of $\M_n(\bZ_p)$.

\begin{rmk} In this section, all probability measures we deal with are the Haar measures. For example, we assume $\M_n(\bZ_p)_{A_n} = \M_n(\bZ_p)_{A_n}^{\Ha}$. We shall keep this assumption till \S \ref{HaEnd}. Starting from Section \ref{red}, we shall drop this assumption.
\end{rmk}

\subsection{Linearization and equidistribution} For any $X \in \M_n(\bZ_p)$, we note that
\begin{equation}\label{Lee}
\cok(P(X)) \simeq_{R} \cok_{R}(X - \bar{t}I_n) := \frac{R^n}{((X - \bar{t}I_n)R^n)},
\end{equation}
where 
\bi
	\item $I_n$ is the $n \times n$ identity matrix,
	\item $R := \bZ_p[t]/(P(t))$, and
	\item $\bar{t} \in R$ is the image of $t$.
\ei	
We call this isomorphism \textbf{Lee's linearization trick}, first used in \cite{LeeA}. The isomorphism linearizes our problem by shifting the difficulty of taking the polynomial push-forward $P(X)$ of $X$ into dealing with a more complicated ring $R$ instead of $\bZ_p$. This will be used not only for proving Theorem \ref{main} but also for proving Theorem \ref{main2} by using the version of \eqref{Lee} with
\bi
	\item $X \in \M_n(\bZ/p^k\bZ)$ for a given $k \in \bZ_{\geq 1}$,
	\item $P(t) \in (\bZ/p^k\bZ)[t]$ monic, and
	\item $R = (\bZ/p^k\bZ)[t]/(P(t))$
\ei
instead.

\hspace{3mm} The following is the linearized version of Proposition \ref{prep}.

\begin{prop}\label{prep'} Let $n \in \bZ_{\geq 1}$. Fix a finite size module $G$ over $R$ and $J_n \in \M_n(R/pR)$ such that $\cok(J_n) \simeq_{\bF_p[t]} G/pG$. Then the following are equivalent:
\be
	\item There exists $Z \in \M_n(R)$ such that $\cok(Z) \simeq_{R} G$ and $Z \equiv J_n \ppmod{p}$.
	\item We have $|\Hom_{\bZ_{p}[t]}(G, \bF_{p^{d_{j}}})| = |\Ext_{R}^{1}(G, \bF_{p^{d_{j}}})|$ for $1 \leq j \leq l$.
\ee
\end{prop}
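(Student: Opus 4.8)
The plan is to pass to the local case, where condition (2) will turn out to be equivalent to $G$ admitting a square presentation matrix over $R$, and then to lift a given residual presentation $J_n$ by a Smith/Weierstrass-type normal form over $R/pR$. First I would use that $\bZ_p$ is complete to lift the factorization \eqref{fac} by Hensel's lemma to a comaximal factorization $P(t) = Q_1(t)\cdots Q_l(t)$ with each $Q_j$ monic and $\bar Q_j = \bar P_j(t)^{m_j}$, giving $R \simeq \prod_{j=1}^l R_j$ with $R_j := \bZ_p[t]/(Q_j)$; each $R_j$ is a complete local ring with residue field $F_j := \bF_{p^{d_j}}$, and being a hypersurface quotient of the regular ring $\bZ_p[t]$ it is Cohen--Macaulay of dimension $1$. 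Writing $G = \prod_j G_j$ and $J_n = \prod_j J_n^{(j)}$ for the induced decompositions, I would observe $\Hom_{\bZ_p[t]}(G, F_j) = \Hom_{R_j}(G_j, F_j)$ and $\Ext^1_R(G, F_j) = \Ext^1_{R_j}(G_j, F_j)$, and then compute both from a minimal free resolution of $G_j$ over $R_j$: as the differentials of a minimal resolution have entries in $\m_j$, applying $\Hom_{R_j}(-,F_j)$ produces zero maps, so $\Hom_{R_j}(G_j,F_j) \simeq F_j^{b_0}$ and $\Ext^1_{R_j}(G_j,F_j) \simeq F_j^{b_1}$, where $b_0 = \mu_{R_j}(G_j)$ is the minimal number of generators and $b_1 = \mu_{R_j}(\Omega_{R_j}G_j)$ is the minimal number of relations, $\Omega_{R_j}G_j := \ker(R_j^{b_0}\twoheadrightarrow G_j)$ for a minimal cover. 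Thus condition (2) says precisely that $b_0 = b_1$ for every $j$, equivalently that each $G_j$ has a presentation $R_j^{a_j} \xrightarrow{\phi_j} R_j^{a_j} \to G_j \to 0$ by a square matrix with $a_j := \mu_{R_j}(G_j)$.

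For $(1)\Rightarrow(2)$, from $Z \in \M_n(R)$ presenting $G$ with $Z \equiv J_n \pmod p$ I would look at the factor $Z^{(j)} \in \M_n(R_j)$: the Fitting ideal identity $(\det Z^{(j)}) = \mathrm{Fitt}_0(G_j)$ together with $\Supp(G_j)\subseteq\{\m_j\}$ (as $G_j$ is finite) shows $\det Z^{(j)}$ lies in no minimal prime of $R_j$, and since $R_j$ is Cohen--Macaulay it has no embedded primes, so $\det Z^{(j)}$ is a nonzerodivisor and $Z^{(j)}$ is injective; then $0 \to R_j^n \xrightarrow{Z^{(j)}} R_j^n \to G_j \to 0$ is a free resolution, so $\mathrm{pd}_{R_j}G_j \le 1$, and localizing a minimal free resolution $0 \to R_j^{b_1} \to R_j^{b_0} \to G_j \to 0$ at a minimal prime of $R_j$ (where the finite-length $G_j$ vanishes, using $\dim R_j = 1$) forces $b_0 = b_1$. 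For $(2)\Rightarrow(1)$, using the reformulation I would take the square presentations $\phi_j \in \M_{a_j}(R_j)$, pad each with an identity block and take the product over $j$ to get $\Phi \in \M_a(R)$ presenting $G$ with $a := \max_j a_j$, and check $a \le n$ since $n \ge \mu_{R/pR}(G/pG) = a$. Then $\Phi \oplus I_{n-a} \in \M_n(R)$ still presents $G$ and reduces mod $p$ to a matrix presenting $G/pG$, the same module presented by $J_n$. Since $R/pR \simeq \prod_j \bF_p[t]/(\bar P_j(t)^{m_j})$ is a finite product of quotients of the principal ideal domain $\bF_p[t]$, every matrix over it has a Smith normal form and two matrices over it are equivalent iff their cokernels are isomorphic; hence $J_n = \bar U(\bar\Phi \oplus I_{n-a})\bar V$ for some $\bar U, \bar V \in \GL_n(R/pR)$, and because $pR \subseteq \mathrm{Jac}(R)$ any lifts $U, V \in \M_n(R)$ automatically lie in $\GL_n(R)$, so $Z := U(\Phi\oplus I_{n-a})V$ satisfies $Z \equiv J_n \pmod p$ and $\cok_R(Z) \simeq G$.

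The part I expect to be the main obstacle is the reformulation of condition (2) in the first step, and within $(1)\Rightarrow(2)$ the argument that a square presentation of the finite-length module $G_j$ over the one-dimensional Cohen--Macaulay ring $R_j$ forces $\mathrm{pd}_{R_j}G_j \le 1$ and hence $b_0 = b_1$; once that equivalence is in hand, the remaining ingredients --- Hensel lifting of \eqref{fac}, the minimal-resolution computation of $\Hom$ and $\Ext^1$, the Smith normal form over $R/pR$, and lifting of invertible matrices through $R \to R/pR$ --- are standard and should go through without difficulty.
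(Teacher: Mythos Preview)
Your argument is correct and takes a genuinely different route from the paper. The paper proves Proposition~\ref{prep'} jointly with Theorem~\ref{Ha'}, deriving both from its general Haar-measure formula (Theorem~\ref{thm:fw_general}) over complete Noetherian local rings: for $(1)\Rightarrow(2)$ it combines the existence clause of Theorem~\ref{thm:fw_general} (which forces $b_0\ge b_1$) with a separate lemma (Lemma~\ref{lem:ext_geq_hom}, proved by passing to the fraction field of $\bZ_p[t]/(F)$ for an irreducible factor $F\mid P$) that gives $b_1\ge b_0$; for $(2)\Rightarrow(1)$ it simply observes that the conditional probability in Theorem~\ref{Ha'} is nonzero. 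Your proof bypasses all probabilistic input: for $(1)\Rightarrow(2)$ you exploit that each $R_j$ is one-dimensional Cohen--Macaulay, so the determinant of a square presentation of a finite-length module is a nonzerodivisor, yielding $\mathrm{pd}_{R_j}G_j\le 1$ and then $b_0=b_1$ by localization at a minimal prime; for $(2)\Rightarrow(1)$ you construct $Z$ explicitly by padding a minimal square presentation and matching the residue $J_n$ via Smith normal form over $R/pR$ (a product of Artinian principal ideal rings) and lifting units through $R\twoheadrightarrow R/pR$. Your route is more elementary and self-contained for this proposition in isolation; the paper's route is less direct here but amortizes the cost, since the same machinery simultaneously yields the exact probability formula of Theorem~\ref{Ha'}. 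One small point: your appeal to Smith normal form is really the statement that over a Noetherian local ring two matrices with isomorphic cokernels are row-column equivalent (the paper records this as Lemma~\ref{lem:row_col_equiv}); your justification via ``quotients of a PID'' is fine since each $\bF_p[t]/(\bar P_j^{m_j})$ is a local principal ideal ring, but the more robust reason is the local-ring fact, applied componentwise.
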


\hspace{3mm} The following is the linearized version of Theorem \ref{main}. Shortly, we show that this version together with an equidistribution theorem implies Theorem \ref{main}. We let $R := \bZ_p[t]/(P(t))$ for the rest of this section.

\begin{thm}\label{Ha'} Keeping the hypotheses and notation in Proposition \ref{prep'}, if $G$ satisfies
\[|\Hom_{\bZ_{p}[t]}(G, \bF_{p^{d_{j}}})| = |\Ext_{R}^{1}(G, \bF_{p^{d_{j}}})|\]
for $1 \leq j \leq l$, then
\[\underset{Z \in \M_n(R)}{\Prob}(\cok(Z) \simeq_{\bZ_p[t]} G | Z \equiv J_n \ppmod{p}) = \frac{|\Aut_{\bZ_p[t]}(G/pG)|\prod_{j=1}^{l}\prod_{i=1}^{u_j(G/pG)}(1 - p^{-id_j})}{|\Aut_{\bZ_p[t]}(G)|}\]
for any $n \in \bZ_{\geq 1}$. Otherwise, the probability is $0$.
\end{thm}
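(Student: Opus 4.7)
My plan is to treat two cases according to Proposition~\ref{prep'}. If the cohomological condition fails for some $j$, then no $Z \in \M_n(R)$ satisfies both $Z \equiv J_n \pmod p$ and $\cok_R(Z) \simeq_R G$, so the probability is $0$. Otherwise, I fix any lift $\tilde J_n \in \M_n(R)$ of $J_n$ and parameterize $Z = \tilde J_n + pY$ with $Y$ Haar-distributed on $\M_n(R)$; this identifies the conditional Haar measure on $\{Z : Z \equiv J_n \pmod p\}$ with the Haar measure on $\M_n(R)$, and the target becomes $\Prob_Y(\cok_R(\tilde J_n + pY) \simeq_R G)$.

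Next I would employ the surjection-counting method: each $Z$ with $\cok_R Z \simeq_R G$ yields exactly $|\Aut_R(G)|$ surjections $\phi : R^n \twoheadrightarrow G$ with $\ker\phi = Z R^n$, so
\[\Prob_Y(\cok_R Z \simeq_R G) = \frac{1}{|\Aut_R(G)|} \sum_{\phi} \Prob_Y\bigl(\phi \circ Z = 0 \text{ and } ZR^n = \ker\phi\bigr).\]
The equation $\phi \circ Z = 0$ reduces to $\phi \circ \tilde J_n = -p(\phi \circ Y)$, forcing $\phi \circ \tilde J_n \in (pG)^n$; equivalently, $\bar\phi : (R/pR)^n \to G/pG$ factors through $(R/pR)^n \twoheadrightarrow \cok_{R/pR}(J_n) \simeq_R G/pG$. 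The $\bar\phi$'s with this property form an $\Aut_R(G/pG)$-torsor, and each admits $|pG|^n$ surjective lifts $\phi$ by Nakayama; for each such $\phi$, using that $\phi \circ Y$ is Haar on $G^n$, a direct computation gives $\Prob_Y(\phi \circ Z = 0) = 1/|pG|^n$. Hence $\sum_\phi \Prob_Y(\phi \circ Z = 0) = |\Aut_R(G/pG)|$, which supplies the numerator factor $|\Aut_R(G/pG)|$ in the claimed formula.

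The outstanding ingredient is the conditional probability $\Prob_Y(ZR^n = \ker\phi \mid \phi \circ Z = 0)$ for each compatible $\phi$. This is where the equidistribution result (Theorem~\ref{equi}) enters; I plan to deduce it from a noncommutative Weierstrass preparation theorem for the matrix ring $\M_n(\bZ_p)$ (Theorems~\ref{thm:weierstrass} and~\ref{thm:finweierstrass}), exploiting the identification $\M_n(R) = \M_n(\bZ_p)[t]/(P(t))$. Granting equidistribution, this conditional probability is independent of the compatible $\phi$ and decouples across the $\bar P_j$-primary components of $G/pG$, factoring as the Friedman--Washington-type product $\prod_{j=1}^l \prod_{i=1}^{u_j(G/pG)}(1 - p^{-id_j})$. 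Multiplying the three contributions assembles the formula of Theorem~\ref{Ha'}.

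The main obstacle is proving the noncommutative Weierstrass preparation theorem for $\M_n(\bZ_p)$. Classical preparation in $\bZ_p[[t]]$ crucially exploits that units are scalars; here the units form $\GL_n(\bZ_p)$, and one must carefully track how matrix row/column operations interact with the ideal $(P(t))$ and with the factorization \eqref{fac} of $\bar P$. The difficulty is genuine precisely when $\bar P$ has repeated irreducible factors ($m_j \geq 2$), since then $R$ is not a product of DVRs and the elementary-divisor techniques used in \cite{CK, CLS} are no longer available—handling this regime is a main contribution of our approach.
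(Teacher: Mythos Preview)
Your proposal inverts the paper's logical flow. Theorem~\ref{equi} and the noncommutative Weierstrass preparation theorems are used in the paper to deduce Theorem~\ref{main} \emph{from} Theorem~\ref{Ha'}, not to prove Theorem~\ref{Ha'} itself. Indeed, Theorem~\ref{equi} is a statement about $X \in \M_n(\bZ_p)_{A_n}$ and the cokernel $\cok_R(X - \bar t I_n)$; it says nothing about a Haar-random $Z \in \M_n(R)$ with prescribed residue $J_n \in \M_n(R/pR)$, so it cannot supply the conditional probability $\Prob_Y(ZR^n = \ker\phi \mid \phi \circ Z = 0)$ that you identify as the outstanding ingredient. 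Your first two paragraphs---the case split via Proposition~\ref{prep'} and the computation $\sum_\phi \Prob_Y(\phi \circ Z = 0) = |\Aut_R(G/pG)|$---are correct, but the third paragraph misidentifies the tool needed to finish.

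That conditional probability is the genuine gap. Given $\phi \circ Z = 0$ you have $ZR^n \subseteq M := \ker\phi$, and you need the probability that $Z : R^n \to M$ is surjective. Since $R$ is not a product of DVRs when some $m_j \geq 2$, the submodule $M$ is typically not free, and the surjectivity probability is governed by $\beta_0^R(M)$. The paper handles this via minimal resolutions: one shows $\beta_0^R(M) = n + \beta_1^R(G) - \beta_0^R(G)$ (cf.\ the proof of Lemma~\ref{lem:betti_bound}), and the hypothesis $|\Hom| = |\Ext^1|$ is exactly $\beta_0^R(G) = \beta_1^R(G)$, whence $\beta_0^R(M) = n$. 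The paper packages this into Theorem~\ref{thm:fw_general} (a Haar cokernel formula over an arbitrary complete Noetherian local ring with finite residue field, in terms of Betti numbers) together with Lemma~\ref{lem:equidistribution_residue} (independence of the conditional probability from the specific residue $J_n$, proved by row-column equivalence over $R/pR$ via Lemma~\ref{lem:row_col_equiv}, with no Weierstrass preparation involved). These combine into Theorem~\ref{thm:fw_fiberwise}; after a CRT reduction to the local case $l=1$, the exponent $b_0 - b_1'$ appearing there is identified with $u_1(G/pG)$ by Lemma~\ref{lem:betti_dvr_quotient}, yielding the product in Theorem~\ref{Ha'}.
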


\hspace{3mm} The key in deducing Proposition \ref{prep} and Theorem \ref{main} from Proposition \ref{prep'} and Theorem \ref{Ha'} is to establish the following surprising equidistribution result in its own right, a special case of which was first found by the first author, Liang, and Strand in \cite[Lemma 3.7]{CLS}. Write $d := \deg(P)$ for convenience from now on.

\begin{thm}\label{equi} For any $n \in \bZ_{\geq 1}$ and a finite size $R$-module $G$. For any $pY_1, pY_2, \dots, pY_{d-1} \in p\M_n(\bZ_p)$, we have
\[\underset{X \in \M_n(\bZ_p)_{A_{n}}}\Prob(\cok_{R}(X - \bar{t} I_n) \simeq_{R} G) = \underset{X \in \M_n(\bZ_p)_{A_{n}}}\Prob(\cok_{R}(X + \bar{t}(pY_1 - I_n) + \bar{t}^2pY_2 + \cdots + \bar{t}^{d-1}pY_{d-1}) \simeq_{R} G).\]
\end{thm}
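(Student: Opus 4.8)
The plan is to show that the two probabilities agree by constructing, for each choice of $pY_1,\dots,pY_{d-1}\in p\M_n(\bZ_p)$, a measure-preserving bijection $\Phi$ of the sample space $\M_n(\bZ_p)_{A_n}$ that carries the event $\{\cok_R(X-\bar t I_n)\simeq_R G\}$ onto the event $\{\cok_R(X+\bar t(pY_1-I_n)+\bar t^2 pY_2+\cdots+\bar t^{d-1}pY_{d-1})\simeq_R G\}$. Since $\cok_R(Z)$ is unchanged when $Z$ is multiplied on the left or right by a unit of $\M_n(R)$, the strategy is: given $X\in\M_n(\bZ_p)_{A_n}$, find a unit $U=U(X)\in\GL_n(R)$ and a target $X'\in\M_n(\bZ_p)_{A_n}$ such that
\[
U\cdot(X-\bar t I_n) = X' + \bar t(pY_1-I_n)+\bar t^2 pY_2+\cdots+\bar t^{d-1}pY_{d-1}
\]
in $\M_n(R)$; then $\Phi(X):=X'$ does the job, provided $X\mapsto X'$ is a bijection of $\M_n(\bZ_p)_{A_n}$ preserving the Haar measure. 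Expanding elements of $\M_n(R)=\M_n(\bZ_p)[t]/(P(t))$ in the basis $1,\bar t,\dots,\bar t^{d-1}$, the equation becomes a system matching coefficients of each power of $\bar t$; one wants to solve it by taking $U=I_n+\bar t V_1+\cdots+\bar t^{d-1}V_{d-1}$ with $V_i\in\M_n(\bZ_p)$ to be determined recursively, noting that such a $U$ is automatically a unit of $\M_n(R)$ because $U\equiv I_n\pmod{\bar t}$ and $\bar t$ is nilpotent in $R/pR$... but $\bar t$ is not nilpotent in $R$ itself, so this last point needs the more careful argument below.

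The cleaner route, and the one I would actually pursue, is to use the noncommutative Weierstrass preparation theorem for $\M_n(\bZ_p)$ (Theorems \ref{thm:weierstrass} and \ref{thm:finweierstrass}) that the paper announces. The point is that $P(X)$ factors, via Weierstrass preparation applied in $\M_n(\bZ_p)\llb t\rrb$ or its finite-level analogue, into a distinguished part and a unit part determined by the residue $A_n=\bar X$; the tail perturbation $\bar t(pY_1)+\cdots+\bar t^{d-1}pY_{d-1}$ lies in the "higher-order" ideal and can be absorbed by modifying the distinguished factor, at the cost of conjugating/multiplying by a unit of $\M_n(R)$ and simultaneously translating $X$ by an element of $p\M_n(\bZ_p)$. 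Concretely, I would (i) rewrite $X+\bar t(pY_1-I_n)+\cdots+\bar t^{d-1}pY_{d-1}$ as $(I_n+\text{(unit correction)})\cdot(X'-\bar t I_n)$ for a suitable $X'\equiv A_n\pmod p$, by solving the coefficient-matching system one power of $\bar t$ at a time using that the leading obstruction at each stage only involves $\bar t$-multiplication and hence (after reducing mod the relation $P(t)=0$ and tracking the contribution of $P$'s lower coefficients, all of which is a $p$-adically small correction because $pY_i\in p\M_n$) is invertible; (ii) check that the resulting map $X\mapsto X'$ on $\M_n(\bZ_p)_{A_n}$ is an affine-linear automorphism over $\bZ_p$ with unipotent (hence measure-preserving) linear part — this is where the hypothesis $pY_i\in p\M_n(\bZ_p)$ rather than arbitrary $Y_i$ is essential, since it makes all the corrections congruent to the identity mod $p$ and keeps us inside the coset $\M_n(\bZ_p)_{A_n}$; and (iii) observe $\cok_R$ of the two sides agree because they differ by left multiplication by a unit of $\M_n(R)$.

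The main obstacle I anticipate is step (i): carrying out the Weierstrass-type factorization uniformly in the perturbation parameters and keeping careful track of how the relation $\bar t^d = -(\text{lower terms of }P(\bar t))$ feeds back lower-degree corrections when one clears the $\bar t^{d-1}$, $\bar t^{d-2}$, \dots coefficients — a naive recursion from the top coefficient down produces a feedback loop through $P$, and one has to check it converges ($p$-adically, or terminates at finite level in the $\bZ/p^k\bZ$ setting of \eqref{Lee}). I expect this is exactly what the noncommutative Weierstrass preparation theorem is engineered to handle: it should let us replace the ad hoc recursion with a clean statement that any element of $\M_n(R)$ congruent mod $p$ to $A_n-\bar t I_n$ is $\GL_n(R)$-equivalent to one of the normalized form $X'-\bar t I_n$, and that the correspondence is measure-preserving on the relevant coset. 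So in practice I would first prove the Weierstrass statements, then deduce Theorem \ref{equi} as a short corollary of the uniqueness/normalization they provide, rather than attempt a bare-hands matrix recursion.
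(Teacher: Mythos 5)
Your plan matches the paper's proof in all essentials: reduce Theorem \ref{equi} to finding a measure-preserving bijection $\Phi$ of $\M_n(\bZ_p)_{A_n}$ that realizes the two perturbed linearizations as right (or left) $\GL_n(R)$-multiples of each other, and obtain $\Phi$ from a noncommutative Weierstrass division/preparation statement in $\M_n(\bZ_p)\llb t\rrb$ that you would prove first. The paper does exactly this via Lemma \ref{lem:equidist_map} and Theorems \ref{thm:weierstrass} and \ref{thm:finweierstrass}, and you correctly diagnose the two obstacles: $\bar t$ is not nilpotent in $R$ so the naive truncated-$U$ recursion needs a genuine $p$-adic convergence argument, and the feedback through $P(\bar t)$ is the crux. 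One misstep worth flagging: you justify measure-preservation by asserting that $X\mapsto X'$ is an affine-linear automorphism with unipotent linear part. That is false once $d\geq 3$; as Example \ref{exmp:division} indicates, $\Phi(X)$ involves expressions like $X(I_n-XpY_2)^{-1}$ and is genuinely nonlinear in $X$. The paper's (and the correct) mechanism is instead that $\Phi$ descends to a well-defined bijection modulo $p^k$ for every $k$, which is where the finite version Theorem \ref{thm:finweierstrass} (applied with uniqueness) is essential; compatibility with all the finite quotients is what forces Haar-measure preservation, not any linearity of $\Phi$ itself.
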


\hspace{3mm} We now assume Theorems \ref{Ha'} and \ref{equi} and then show the purported implications:

\begin{proof}[Theorems \ref{Ha'} and \ref{equi} imply Theorem \ref{main}] Assume the hypotheses of Theorem \ref{main}. Let
\[\M_n(R)_{A_n - \bar{t}I_n} := \{Z \in \M_n(R) : Z \equiv A_n - \bar{t}I_n \ppmod{p}\}.\]
By Theorem \ref{equi} with $J_n = A_n - \bar{t}I_n$, we have
\begin{align*}
&\underset{Z \in \M_n(R)_{A_n - \bar{t}I_n}}\Prob(\cok_{R}(Z) \simeq_{\bZ_p[t]} G) \\
&= \int_{(X, pY_1, \dots, pY_{d-1}) \in \M_n(\bZ_p)_{A_n} \times (p\M_n(\bZ_p))^{d-1}}\bb{1}(\cok_{R}(X+\bar{t}(pY_1- I_n) + \bar{t}^2pY_2 + \cdots + \bar{t}^{d-1}pY_{d-1}) \simeq_{\bZ_p[t]} G) d(\rho_n \times \mu_n^{d-1}) \\
&= \int_{(pY_1, \dots, pY_{d-1}) \in (p\M_n(\bZ_p))^{d-1}}\underset{X \in \M_n(\bZ_p)_{A_n}}\Prob(\cok_{R}(X+\bar{t}(pY_1- I_n) + \bar{t}^2pY_2 + \cdots + \bar{t}^{d-1}pY_{d-1}) \simeq_{\bZ_p[t]} G) d \mu_n^{d-1} \\
&= \int_{(pY_1, \dots, pY_{d-1}) \in (p\M_n(\bZ_p))^{d-1}}\underset{X \in \M_n(\bZ_p)_{A_n}}\Prob(\cok_{R}(X-\bar{t} I_n) \simeq_{\bZ_p[t]} G) d \mu_n^{d-1} \\
&= \underset{X \in \M_n(\bZ_p)_{A_n}}\Prob(\cok_{R}(X-\bar{t} I_n) \simeq_{\bZ_p[t]} G) \\
&= \underset{X \in \M_n(\bZ_p)_{A_n}}\Prob(\cok(P(X)) \simeq_{\bZ_p[t]} G),
\end{align*}
where $\mu_n$ is the Haar measure of $p\M_n(\bZ_p)$ and $\rho_n$ is the Haar measure of $\M_n(\bZ_p)_{A_n}$, which is introduced right after Theorem \ref{main}. (We used Lee's linearization trick \eqref{Lee} at the end.) Hence, Theorems \ref{Ha'} and \ref{equi} imply Theorem \ref{main}.
\end{proof}

\begin{proof}[Proposition \ref{prep'} implies Proposition \ref{prep} assuming Theorems \ref{Ha'} and \ref{equi}] Let $G$ be a finite-sized $R$-module and $A_n \in \M_n(\bF_p)$ such that 
\[\cok(P(A_n)) \simeq_{\bF_p[t]} G/pG.\]
Let $J_n := A_n - \bar{t}I_n \in \M_n(R/pR)$. First, assume (1) of Proposition \ref{prep}: there exists $X \in \M_n(\bZ_p)$ such that $\cok(P(X)) \simeq_{R} G$ and $X \equiv A_n \ppmod{p}$. Then, we take $Z := X - \bar{t}I_n \in \M_n(R)$, which satisfies (1) of Proposition \ref{prep'} due to Lee's linearization trick \eqref{Lee}. This implies (2) of Propositions \ref{prep'} and \ref{prep}.

\hspace{3mm} Conversely, assume (2) of Proposition \ref{prep} (which is identical to Proposition \ref{prep'}). Then by (1) of Proposition \ref{prep'}, we have $Z \in \M_n(R)$ such that $Z \equiv A_n - \bar{t}I_n \ppmod{p}$. This implies that $Z = A_n + pY_0 + \bar{t}(pY_1 - I_n) + \bar{t}^2pY_2 + \cdots + \bar{t}^{d-1}pY_{d-1}$ for some $pY_0, pY_1, pY_2, \dots, pY_{d-1} \in p\M_n(\bZ_p)$. Take $X := A_n + pY_0 \in \M_n(\bZ_p)$, which satisfies $X \equiv A_n \ppmod{p}$. By Theorem \ref{equi}, the same argument as in the previous proof gives us
\[\underset{X \in \M_n(\bZ_p)_{A_n}}\Prob(\cok(P(X)) \simeq_R G) = \underset{X \in \M_n(\bZ_p)_{A_n}}\Prob(\cok_R(X - \bar{t}I_n) \simeq_R G) = \underset{Z \in \M_n(R)}\Prob(\cok_R(Z) \simeq_R G \mid Z \equiv J_n \ppmod{p}).\]
Then by Theorem \ref{Ha'}, the last probability is not $0$, so this implies (1) of Proposition \ref{prep}, as desired.
\end{proof}

\section{Proofs of Proposition \ref{prep'} and Theorem \ref{Ha'}}

\hspace{3mm} Recall that our current goal (from \S \ref{HaStart} to \S \ref{HaEnd}) is to prove Proposition \ref{prep} and Theorem \ref{main}. From the previous section, we know that in order to prove the desired statements, it suffices to prove Proposition \ref{prep'}, Theorem \ref{Ha'}, and Theorem \ref{equi}. In this section, we prove the first two of these.

\hspace{3mm} Since $R=\Zp[t]/(P(t))$ is not necessarily a PID, the proofs of Proposition \ref{prep'} and Theorem \ref{Ha'} differ significantly from the proof given in Friedman and Washington \cite{FW} (which corresponds to the case $P(t) = t$) due to the lack of the Smith normal form over $R$. Instead, we shall first develop a few formulas applicable to local Noetherian rings in general. They involve minimal resolutions, which we recall next.

\subsection{Minimal resolutions} Throughout this subsection, let $(R,\m, \ka)$ be a Noetherian local ring with maximal ideal $\m$ and residue field $\ka$. In addition, let $M$ be a finitely generated $R$-module. A \textbf{minimal resolution} of a finitely generated $R$-module $G$ is an exact sequence
\begin{equation}\label{eq:min_resol}
    \cdots \map[A_2] R^{b_1} \map[A_1] R^{b_0} \map[A_0] G \map[A_{-1}] 0
\end{equation}
such that the following equivalent\footnote{This equivalence can be deduced from Nakayama's lemma. (For example, it directly follows from \cite[Lemma 19.4]{E}.)} conditions hold:
\begin{enumerate}
    \item Each matrix $A_i$ with $i \geq 1$ has entries in $\m$;
    \item For each $i\geq 0$, we have that $b_i$ is the minimal number of generators for $\ker(A_{i-1}) = \im(A_{i})$.
\end{enumerate}
By (1), we have 
\begin{equation}\label{eq:resol_rank}
    b_i=\dim_{\ka}( \Tor^R_i(G,\ka) ) = \dim_{\ka} ( \Ext_R^i(G,\ka) ).
\end{equation}
In particular, $b_i$ only depends on $G$, but not on the resolution. Hence, we may write $\beta_i^R(G):=b_i$ and call it the $i$-th \textbf{Betti number} of $G$. We repetitively use that $\beta_0^R(G)=\dim_{\ka} (G/\m G)$ is the minimal number of generators of $G$, which is called the \textbf{rank} of $G$.

\hspace{3mm} We are ready to state the key formula we need in the proofs of Proposition \ref{prep'} and Theorem \ref{Ha'}. For our purpose, we only need the square-matrix case $u=0$ of the following theorem, but we present the general case because it does not appear to be in the literature. Given $m, n \in \bZ_{\geq 1}$, we denote by $\M_{n \times m}(A)$ the set of $n \times m$ matrices over a given ring $A$.

\begin{thm}\label{thm:fw_general} Let $(R,\m,\Fq)$ be a complete Noetherian local ring with a finite residue field $\bF_q$ of $q$ elements, and fix $u\in \bZ_{\geq 0}$. Let $G$ be a finite-sized $R$-module with Betti numbers $\beta^R_i(G) = b_i$. Then there exists $X \in \M_{n\times (n+u)}(R)$ with $\cok(X) \simeq_R G$ if and only if $n\geq b_0\geq b_1-u$. Moreover, with respect to the Haar measure, we have
    \begin{equation}\label{eq:fw_general}
        \underset{X \in \M_{n\times (n+u)}(R)}\Prob(\cok(X) \simeq_R G) = \frac{1}{\abs{\Aut_R (G)}\, \abs{G}^u} \prod_{i=u+b_0-b_1+1}^{n+u}(1-q^{-i}) \prod_{j=n-b_0+1}^n (1-q^{-j})
    \end{equation}
    if $n\geq b_0\geq b_1-u$, and zero otherwise. 
\end{thm}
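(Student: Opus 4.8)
The plan is to count, for a complete Noetherian local ring $(R,\m,\bF_q)$, the proportion of matrices $X \in \M_{n\times(n+u)}(R)$ with $\cok(X) \simeq_R G$ by passing through a surjection-counting argument in the style of Friedman--Washington, but organized around minimal resolutions since $R$ need not be a PID. First I would observe that giving $X \in \M_{n\times(n+u)}(R)$ with a chosen isomorphism $\cok(X) \simeq_R G$ is the same as giving a surjection $\phi\colon R^n \onto G$ (the composite $R^n \to \cok(X) \xrightarrow{\sim} G$) together with a lift of the relations, i.e.\ a map $R^{n+u} \to R^n$ whose image is exactly $\ker(\phi)$. So I would write
\[
\underset{X \in \M_{n\times(n+u)}(R)}{\Prob}(\cok(X)\simeq_R G) \;=\; \frac{1}{|\Aut_R(G)|}\cdot\frac{\#\{\phi\colon R^n \onto G\}}{|G|^{\,n}}\cdot\frac{\#\{\psi\colon R^{n+u}\onto \ker\phi\}}{|\,?\,|},
\]
being careful about normalizations: here $|G|^n$ counts all maps $R^n \to G$, and one must compute the measure of the set of $R^{n+u}\to R^n$ landing surjectively onto a fixed submodule $K = \ker\phi$ of $R^n$. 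The factor $1/|\Aut_R(G)|$ accounts for the overcounting by the choice of isomorphism.

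The two combinatorial inputs I would isolate are: (a) the number of surjections $R^n \onto G$, and (b) for a fixed submodule $K \subseteq R^n$ with $R^n/K \simeq G$, the probability that a Haar-random $\psi \in \M_{n\times(n+u)}(R) \cong \Hom_R(R^{n+u}, R^n)$ has image exactly $K$. For (a): since $G$ is finite and $R$ is local with residue field $\bF_q$, a map $R^n \to G$ is surjective iff it is surjective mod $\m$, and by Nakayama the number of surjections is $|\Hom_R(R^n,G)| \cdot \prod_{j=n-b_0+1}^{n}(1-q^{-j})$ where $b_0 = \beta_0^R(G) = \dim_{\bF_q}(G/\m G)$ — this is exactly the local-ring analogue of the second product in \eqref{eq:fw_general}, and it also forces $n \geq b_0$. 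For (b): the image of $\psi$ is a submodule of $R^n$; I would condition on $K$ and use that $\psi$ surjects onto $K$ iff the induced map $\bF_q^{n+u} \to K/\m K$ is surjective. Here $\dim_{\bF_q}(K/\m K) = \beta_1^R(G) = b_1$ because $K = \im(A_1)$ in a minimal resolution and the minimality condition (1) says precisely that $b_1$ is the minimal number of generators of $K$; a Haar-random element of $\Hom_R(R^{n+u}, K)$ then surjects onto $K$ with probability $\prod_{i=u+b_0-b_1+1}^{n+u}(1-q^{-i})$ after accounting for the index $[R^n : K]$-type normalization, and surjectivity is possible iff $n+u \geq b_1$, i.e.\ $b_0 \geq b_1 - u$. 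Assembling these, together with $\#\{K \subseteq R^n : R^n/K \simeq G\} = \#\{\phi\colon R^n \onto G\}/|\Aut_R(G)|$ and the $|G|^u$ coming from the ratio $|\Hom_R(R^{n+u},R^n)|/(|\Hom_R(R^{n+u},K)|\cdot[\dots])$ bookkeeping, yields \eqref{eq:fw_general}.

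The step I expect to be the main obstacle is (b): making the measure-theoretic normalization precise when $K$ is an arbitrary (not necessarily free) submodule of $R^n$ of corank-type data $b_1$. Over a PID this is the Smith normal form computation in \cite{FW}, but here I would instead argue intrinsically: the set of $\psi\colon R^{n+u}\to R^n$ with $\im\psi = K$ is a union of cosets of $\Hom_R(R^{n+u}, \m K)$ inside $\Hom_R(R^{n+u}, K)$ — no, more carefully, I would filter by the reduction $\bar\psi\colon \bF_q^{n+u} \to K/\m K$ and use that $\psi$ has image $K$ iff $\bar\psi$ is surjective (Nakayama again, using that $K$ is finitely generated since $R$ is Noetherian and $R^n$ is); the fiber of each surjective $\bar\psi$ has the same measure, so the probability is the $\bF_q$-linear surjection probability $\prod_{i=n+u-b_1+1}^{n+u}(1-q^{-i})$ times $|\Hom_{\bF_q}(\bF_q^{n+u}, K/\m K)| / |\Hom_R(R^{n+u},K)|$-style factor. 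Reconciling the index shift between $\prod_{i=n+u-b_1+1}^{n+u}$ and the claimed $\prod_{i=u+b_0-b_1+1}^{n+u}$ is where the hypothesis $n \geq b_0$ enters and where the $|G|^u = [R^n:K]/[\text{something}]$ factor gets pinned down; I would double-check this against the known cases $u=0$ (recovering \eqref{FW} when $R=\bZ_p$, $P(t)=t$) and against $G = 0$ (where the formula must give the probability that a random $n\times(n+u)$ matrix is surjective, $\prod_{i=u+1}^{n+u}(1-q^{-i})$) to fix all constants.
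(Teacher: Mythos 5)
Your decomposition — (a) count surjections $R^n\onto G$, then (b) for a fixed kernel $K$ compute the Haar measure of maps $R^{n+u}\to R^n$ with image exactly $K$ — is the same two-step structure the paper uses (Lemmas \ref{lem:fw_general_step2} and \ref{lem:fw_general_step3}). Step (a) is correct as you state it. But step (b) contains a genuine error that you flag as a bookkeeping nuisance when in fact it is the heart of the matter.

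You assert $\dim_{\bF_q}(K/\m K)=\beta_1^R(G)=b_1$, ``because $K=\im(A_1)$ in a minimal resolution.'' That identification is only valid when $n=b_0$. For arbitrary $n\geq b_0$, the kernel $K$ of a surjection $R^n\onto G$ is \emph{not} the first syzygy module of a minimal resolution; it is (up to isomorphism) $M_0\oplus R^{n-b_0}$, where $M_0$ is the kernel of a minimal free cover $R^{b_0}\onto G$. (This uses Lemma \ref{lem:sur_homogeneity}: any two surjections $R^n\onto G$ differ by an automorphism of $R^n$, so their kernels are isomorphic, and one may take the surjection $A_0\oplus 0:R^{b_0}\oplus R^{n-b_0}\onto G$.) Consequently $\beta_0^R(K)=b_1+(n-b_0)$, not $b_1$. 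Once you use the correct value $n+b_1-b_0$, the surjection probability for $\Hom_R(R^{n+u},K)\onto K$ is $\prod_{i=(n+u)-(n+b_1-b_0)+1}^{n+u}(1-q^{-i})=\prod_{i=u+b_0-b_1+1}^{n+u}(1-q^{-i})$ with no further ``$[R^n:K]$-type'' correction needed, and the solvability condition $n+u\geq\beta_0^R(K)$ simplifies cleanly to $b_0\geq b_1-u$. Your stated condition $n+u\geq b_1$ is strictly weaker when $n>b_0$; using it would incorrectly predict a nonzero probability for some $G$ where the true answer is $0$. The paper's Lemma \ref{lem:betti_bound} is exactly the statement you are missing, and its proof is the direct-sum observation above.

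Two smaller points. First, the $|G|^u$ factor comes out automatically from the identity $|R|^{n(n+u)}=|G|^{n+u}|K|^{n+u}$ once you count $\Sur_R(R^{n+u},K)$ as $|K|^{n+u}$ times a surjection probability; no separate normalization argument is needed. Second, to make the counting rigorous when $R$ is infinite (e.g.\ $R=\bZ_p$), you do need to pass to a finite quotient $R/\m^L$ with $\m^{L-1}G=0$ first (the paper's Lemma \ref{lem:fw_general_step1}); your ``filter by reduction mod $\m$ and use constant fiber measure'' idea can substitute for this, but only if you additionally verify that both Betti numbers $b_0,b_1$ are unchanged under the quotient, which is Lemma \ref{lem:betti_wrt_quotient_ring} (3).
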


\hspace{3mm} We defer the proof of Theorem \ref{thm:fw_general} to \S \ref{sec:fw_general_proof}. 

\subsection{Fixing a residue class} Proposition \ref{prep'} and Theorem \ref{Ha'} concern Haar-random matrices with concentrated residue class, but Theorem \ref{thm:fw_general} is just about Haar-random matrices. In order to apply Theorem \ref{thm:fw_general}, we need the following lemma, whose DVR case was implicitly noted in \cite{FW}:

\begin{lem}
Fix $m, n \in \bZ_{\geq 1}$. Let $(R,\m,\Fq)$ be a complete Noetherian local ring with a finite residue field $\Fq$ of $q$ elements equipped with the Haar measure, and let $\fa \sub \m$ be an ideal of $R$ with $R/\fa$ of finite size. Let $G$ be a finite-length $R$-module. Consider any $\bar X\in \M_{n\times m}(R/\fa)$ satisfying $\cok_{R/\fa}(\bar X)\simeq_{R} G/\fa G$. Then the conditional probability
    \begin{equation}\label{eq:fw_fiberwise}
        \underset{X\in \M_{n\times m}(R)}\Prob\parens[\bigg]{\cok(X) \simeq_R G \,\vrule\, X\equiv \bar X \pmod \fa}
    \end{equation}
    does not depend on $\bar X$. 
\label{lem:equidistribution_residue}
\end{lem}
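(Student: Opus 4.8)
The plan is to move the fiber over $\bar X$ onto the fiber over any other admissible residue $\bar X'$ by a Haar-measure-preserving bijection of $\M_{n\times m}(R)$ of the form $X \mapsto UXV$ with $U \in \GL_n(R)$ and $V \in \GL_m(R)$. Left multiplication by $U$ and right multiplication by $V$ are continuous $R$-linear automorphisms of $\M_{n\times m}(R) \cong R^{nm}$, hence preserve the Haar measure, and since $U,V$ are invertible over $R$ such a map does not change the $R$-isomorphism type of $\cok(\cdot)$. So the whole statement reduces to producing $U,V$ whose reductions modulo $\fa$ carry $\bar X$ to $\bar X'$.

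The core is a purely algebraic claim over the finite local ring $R' := R/\fa$: if $\bar X, \bar X' \in \M_{n\times m}(R')$ have $R'$-isomorphic cokernels, then $\bar X' = \bar U \bar X \bar V$ for some $\bar U \in \GL_n(R')$ and $\bar V \in \GL_m(R')$. To prove this, let $\pi,\pi'\colon R'^n \to M$ be the canonical surjections onto $M := \cok_{R'}(\bar X) \simeq_{R'} \cok_{R'}(\bar X')$, and put $c := \beta_0^{R'}(M)$. By right-exactness of $\otimes_{R'} R'/\m$, the image of $\ker(\pi)$ in $(R'/\m)^n$ is exactly the kernel of the reduced map; lifting a basis of that kernel to $\ker(\pi)$ and extending it to an $R'$-basis of $R'^n$ (Nakayama) exhibits $\pi$ as a split surjection $R'^n \twoheadrightarrow R'^c$ with free kernel followed by a minimal surjection $R'^c \to M$, and likewise for $\pi'$. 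Two minimal surjections $R'^c \to M$ differ by an element of $\GL_c(R')$ (a lift between them reduces mod $\m$ to an isomorphism, then apply Nakayama), and two split surjections $R'^n \to R'^c$ differ by an element of $\GL_n(R')$ (match up their free kernels and their section images, which have equal ranks). Composing, we get $\phi \in \GL_n(R')$ with $\pi' = \pi\phi$, hence $\phi(\im \bar X') = \ker(\pi) = \im \bar X$; applying the same reasoning to the two surjections $R'^m \twoheadrightarrow \im \bar X$ induced by $\bar X$ and by $\phi\bar X'$ yields $\psi \in \GL_m(R')$ with $\phi\bar X' = \bar X\psi$, i.e. $\bar X' = \phi^{-1}\bar X\psi$.

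Granting this, I would apply it to $\bar X$ and $\bar X'$ — both have cokernel $\simeq_{R'} G/\fa G$ by hypothesis — to obtain $\bar U,\bar V$. Since $\fa \subseteq \m$ lies in the Jacobson radical of the local ring $R$, a matrix over $R$ whose reduction modulo $\fa$ is invertible has unit determinant, hence is invertible over $R$; so $\bar U,\bar V$ lift to $U \in \GL_n(R)$, $V \in \GL_m(R)$. Then $\Phi\colon X \mapsto UXV$ is a Haar-measure-preserving bijection of $\M_{n\times m}(R)$ carrying the coset $\{X \equiv \bar X \pmod{\fa}\}$ onto $\{X \equiv \bar X' \pmod{\fa}\}$ and preserving the event $\cok_R(X) \simeq_R G$. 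As all such cosets are translates of $\fa\M_{n\times m}(R)$ and hence have the same positive finite Haar measure, the two conditional probabilities in \eqref{eq:fw_fiberwise} coincide.

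The step I expect to be the main obstacle is the algebraic claim over $R'$ in the second paragraph: the subtlety is that $n$ and $m$ need not be the minimal numbers of generators of $M$ or of $\im \bar X$, so one genuinely has to descend to minimal presentations via Nakayama and invoke transitivity of $\GL_n(R')$ (resp. $\GL_m(R')$) on split surjections onto a free module. Once that orbit statement is in hand, the measure-theoretic transport is routine.
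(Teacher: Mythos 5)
Your proposal is correct and takes essentially the same route as the paper: you reduce to showing that any two admissible residues $\bar X, \bar X'$ are row--column equivalent over $R/\fa$, lift the resulting $\bar U, \bar V$ to $U \in \GL_n(R)$, $V \in \GL_m(R)$ (using $\fa \subseteq \m$), and note that $X \mapsto UXV$ is a Haar-measure-preserving bijection taking one fiber onto the other while preserving the isomorphism class of the cokernel. The only cosmetic difference is that the paper cites its Lemma \ref{lem:row_col_equiv}\,(3) (proved earlier via Lemma \ref{lem:sur_homogeneity}'s projectivity/Nakayama/block-matrix argument), whereas you re-derive that orbit statement in place by factoring each surjection $R'^n \onto M$ as a split surjection onto $R'^c$ followed by a minimal surjection — a reorganization of the same idea.
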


\hspace{3mm} We defer the proof of Lemma \ref{lem:equidistribution_residue} to \S \ref{sec:equidistribution_residue_proof}. Theorem \ref{thm:fw_general} and Lemma \ref{lem:equidistribution_residue} immediately imply the following theorem, which is used in the proofs of Proposition \ref{prep'} and Theorem \ref{Ha'}.

\begin{thm}\label{thm:fw_fiberwise}
    Let $(R,\m,\Fq)$ be a complete Noetherian local ring with a finite residue field $\Fq$ with $q$ elements equipped with the Haar measure, and let $\fa \sub \m$ be an ideal of $R$ with $R/\fa$ of finite size. Let $G$ be a finite-size $R$-module, and let $\bar X\in \M_{n\times (n+u)}(R/\fa)$ be such that $\cok_{R/\fa}(\bar X) \simeq_{R} G/\fa G$. Then for any $u\in \bZ_{\geq 0}$, we have
    \begin{equation}\label{eq:fw_fiberwise_formula}
        \underset{X\in \M_{n\times (n+u)}(R)}\Prob\parens[\bigg]{\cok(X) \simeq_R G \,\bigg|\, X\equiv \bar X \pmod \fa} = \begin{cases}
            {\displaystyle \frac{\abs{\Aut(G/\fa G)}}{\abs{\Aut_R(G)}\abs{\fa G}^u} \prod_{i=u+b_0-b_1+1}^{u+b_0-b_1'} (1-q^{-i})}, & b_0\geq b_1-u,\\
            0, & b_0<b_1-u,
        \end{cases} 
    \end{equation}
     where $b_i = \beta_i^R(G)$ for $i = 0, 1$ and $b_1' = \beta_1^{R/\fa}(G/\fa G)$. In particular, the conditional probability above does not depend on $n$.
\end{thm}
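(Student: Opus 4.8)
The plan is to combine Theorem~\ref{thm:fw_general} and Lemma~\ref{lem:equidistribution_residue} through the law of total probability, exploiting that $R/\fa$ is itself a complete Noetherian local ring with residue field $\Fq$ (being finite, it is automatically complete and Noetherian), so that Theorem~\ref{thm:fw_general} applies to $R/\fa$ as well. Concretely, first I would observe that if $X\in\M_{n\times(n+u)}(R)$ satisfies $\cok(X)\simeq_R G$, then reduction modulo $\fa$ gives $\cok_{R/\fa}(X\bmod\fa)=\cok(X)\ot_R R/\fa\simeq_{R/\fa}G/\fa G$, so the event $\{\cok(X)\simeq_R G\}$ lies inside $\{\cok_{R/\fa}(X\bmod\fa)\simeq_{R/\fa}G/\fa G\}$. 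Since the push-forward of the Haar measure on $\M_{n\times(n+u)}(R)$ under reduction modulo $\fa$ is the uniform measure on the finite set $\M_{n\times(n+u)}(R/\fa)$, partitioning by the residue $\bar X:=X\bmod\fa$ and invoking Lemma~\ref{lem:equidistribution_residue} (which makes the conditional probability in \eqref{eq:fw_fiberwise} a constant $c$ independent of the admissible $\bar X$, and $0$ for inadmissible $\bar X$ by the previous sentence) yields
\begin{equation}\label{eq:fwfiber_totalprob}
\underset{X\in\M_{n\times(n+u)}(R)}{\Prob}(\cok(X)\simeq_R G)=c\cdot\underset{\bar X\in\M_{n\times(n+u)}(R/\fa)}{\Prob}(\cok_{R/\fa}(\bar X)\simeq_{R/\fa}G/\fa G).
\end{equation}

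Next I would evaluate both sides of \eqref{eq:fwfiber_totalprob} via Theorem~\ref{thm:fw_general}: the left-hand side for the ring $R$ and module $G$, and the remaining factor on the right for the finite local ring $R/\fa$ and module $G/\fa G$. Writing $b_i=\beta_i^R(G)$ and $b_i'=\beta_i^{R/\fa}(G/\fa G)$, I have $b_0'=b_0$ because each is $\dim_\ka$ of $G/\m G=(G/\fa G)/\m(G/\fa G)$ (using $\fa\subseteq\m$), and $b_1'\leq b_1$ because reducing a minimal $R$-presentation $R^{b_1}\to R^{b_0}\to G\to 0$ modulo $\fa$ yields an $R/\fa$-presentation of $G/\fa G$ with $b_0$ generators and $b_1$ relations, not necessarily minimal. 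Solving \eqref{eq:fwfiber_totalprob} for $c$: the factors $\prod_{j=n-b_0+1}^n(1-q^{-j})$ occurring on both sides cancel, the ratio of $\prod_{i=u+b_0-b_1+1}^{n+u}(1-q^{-i})$ to $\prod_{i=u+b_0-b_1'+1}^{n+u}(1-q^{-i})$ telescopes to $\prod_{i=u+b_0-b_1+1}^{u+b_0-b_1'}(1-q^{-i})$ (a genuine, possibly empty, product since $b_1\geq b_1'$), and $|G|^u/|G/\fa G|^u=|\fa G|^u$; the result is exactly the stated formula, with $|\Aut(G/\fa G)|$ understood as $|\Aut_{R/\fa}(G/\fa G)|$, which is the quantity Theorem~\ref{thm:fw_general} actually outputs. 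The independence of $n$ is then visible in the final expression.

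It remains to handle the vanishing case. If $b_0<b_1-u$, the "if and only if" part of Theorem~\ref{thm:fw_general} says no $X\in\M_{n\times(n+u)}(R)$ has $\cok(X)\simeq_R G$ for any $n$, so the conditioned event is empty and the probability is $0$; on the other hand the hypothesis that some $\bar X$ with $\cok_{R/\fa}(\bar X)\simeq_{R/\fa}G/\fa G$ exists forces $n\geq b_0\geq b_1'-u$ by Theorem~\ref{thm:fw_general} applied to $R/\fa$, so when $b_0\geq b_1-u$ the conditioning event $\{X\equiv\bar X\pmod{\fa}\}$ has the positive probability $|R/\fa|^{-n(n+u)}$ and \eqref{eq:fwfiber_totalprob} is legitimate. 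I do not expect a real obstacle here: the two cited results supply all the substance, and the only point demanding care is the combinatorial bookkeeping — matching Betti numbers and index ranges, in particular confirming $b_1'\leq b_1$, without which the telescoped product would be "reversed" rather than an honest product of factors $1-q^{-i}$.
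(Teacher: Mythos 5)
Your proof is correct and takes essentially the same route as the paper: both partition the event by the residue class modulo $\fa$, invoke Lemma~\ref{lem:equidistribution_residue} to conclude the conditional probability is constant across admissible residues (so the conditional probability equals the ratio of the two unconditional probabilities computed by Theorem~\ref{thm:fw_general} over $R$ and over $R/\fa$), and then cancel the common factors using $b_0'=b_0$ and $b_1'\le b_1$. The only cosmetic difference is that you reprove the Betti-number comparisons inline rather than citing Lemma~\ref{lem:betti_wrt_quotient_ring}(1)--(2), and you phrase the counting identity as a law-of-total-probability statement rather than in terms of fibers of the reduction map, but the substance is identical.
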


\begin{rmk} In the above theorem, we always have $b_1'\leq b_1$ (by Lemma \ref{lem:betti_wrt_quotient_ring} (2)). It is possible to have an empty product, which we consider as $1$ as usual. Furthermore, even though \eqref{eq:fw_fiberwise_formula} does not depend on $n$, the hypotheses of Theorem \ref{thm:fw_fiberwise} forces $n\geq b_0$. Indeed, we have $\beta_0^{R/\fa}(G/\fa G) = \beta_0^R(G) = b_0$ because both are equal to $\dim_{\Fq}( G/\m G )$. Therefore, the existence of $\bar X\in \M_{n\times (n+u)}(R/\fa)$ with $\cok(\bar X) \simeq_R G/\fa G$ implies $n\geq b_0$. 
\end{rmk}

\hspace{3mm} We use (1) and (2) of the following, and (3) will be used later:

\begin{lem}\label{lem:betti_wrt_quotient_ring}
    Let $(R,\m)$ be a Noetherian local ring. Suppose $\fa\sub \m$ is an ideal of $R$ and $G$ is a finitely generated $R$-module. Then we have
    \begin{enumerate}
        \item $\beta_0^{R/\fa}(G/\fa G)=\beta_0^R(G)$;
        \item $\beta_1^{R/\fa}(G/\fa G)\leq \beta_1^R(G)$;
        \item If we assume furthermore that $\fa=\m \fb$ for some ideal $\fb \sub R$, and $\fb G = 0$, then $\beta_1^{R/\fa}(G)= \beta_1^R(G)$.
    \end{enumerate}
\end{lem}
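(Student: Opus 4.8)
The plan is to prove all three parts directly from minimal free resolutions, using only that $\beta_0^S(M)$ is the minimal number of generators of $M$ and that $\beta_1^S(M)$ is the minimal number of generators of a first syzygy of $M$ (as recalled in the discussion preceding the statement), with Nakayama's lemma supplying the rest. Part (1) is immediate: since $\fa\subseteq\m$, the residue field of $R/\fa$ is again $\ka=R/\m$, and $(\m/\fa)(G/\fa G)=\m G/\fa G$, so the quotient $(G/\fa G)/((\m/\fa)(G/\fa G))$ equals $G/\m G$; taking $\ka$-dimensions gives $\beta_0^{R/\fa}(G/\fa G)=\beta_0^R(G)$.

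For (2), I would begin with a minimal $R$-resolution $\cdots\xrightarrow{A_2}R^{b_1}\xrightarrow{A_1}R^{b_0}\xrightarrow{A_0}G\to 0$ and apply $-\otimes_R R/\fa$; right-exactness of tensor yields an exact sequence $(R/\fa)^{b_1}\xrightarrow{\bar A_1}(R/\fa)^{b_0}\xrightarrow{\bar A_0}G/\fa G\to 0$. Since $A_1$ has entries in $\m$, the reduction $\bar A_1$ has entries in $\m/\fa$, so $\ker\bar A_0=\im\bar A_1\subseteq(\m/\fa)(R/\fa)^{b_0}$. Hence $\bar A_0$ is a minimal $R/\fa$-cover of $G/\fa G$ — its rank $b_0$ is correct by (1) — and $\ker\bar A_0$ is a first syzygy of $G/\fa G$ over $R/\fa$; being generated by the $b_1$ columns of $\bar A_1$, it needs at most $b_1$ generators, so $\beta_1^{R/\fa}(G/\fa G)\leq b_1=\beta_1^R(G)$.

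For (3), the hypotheses $\fa=\m\fb$ and $\fb G=0$ give $\fa G=0$, so $G/\fa G=G$, and by (2) it suffices to prove $\beta_1^R(G)\leq\beta_1^{R/\fa}(G)$. I would take a minimal $R/\fa$-resolution $(R/\fa)^{b_1'}\xrightarrow{\bar A_1}(R/\fa)^{b_0}\xrightarrow{\bar A_0}G\to 0$ with $b_1'=\beta_1^{R/\fa}(G)$ and, by (1), $b_0=\beta_0^{R/\fa}(G)=\beta_0^R(G)$; then view $\bar A_0$ as an $R$-linear minimal cover $A_0\colon R^{b_0}\to G$ and set $S:=\ker A_0\subseteq\m R^{b_0}$. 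Lifting $\bar A_1$ entrywise to $A_1\colon R^{b_1'}\to R^{b_0}$ (with entries in $\m$), one has $A_0A_1\equiv 0\pmod{\fa}$ and $\fa G=0$, hence $A_0A_1=0$; so the columns of $A_1$ lie in $S$ and, by the chosen $R/\fa$-resolution, generate $S$ modulo $S\cap\fa R^{b_0}$. The crucial point is that $A_0(\fb R^{b_0})=\fb G=0$ forces $\fb R^{b_0}\subseteq S$, whence $\fa R^{b_0}=\m\fb R^{b_0}\subseteq\m S$ and therefore $S\cap\fa R^{b_0}\subseteq\m S$; Nakayama's lemma (valid since $S$ is finitely generated over the Noetherian ring $R$) then shows the columns of $A_1$ already generate $S$, so $\beta_1^R(G)\leq b_1'=\beta_1^{R/\fa}(G)$.

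The one genuine obstacle is the final step of (3): promoting ``the lifted columns generate $S$ modulo $\fa R^{b_0}$'' to ``the lifted columns generate $S$''. This is precisely where the structure $\fa=\m\fb$ with $\fb G=0$ enters — it is what makes $\fa R^{b_0}\subseteq\m S$ — and it is the only place in the lemma where more than formal manipulation of resolutions is needed; without such a hypothesis the inequality in (2) genuinely can be strict.
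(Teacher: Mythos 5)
Your proof is correct and follows essentially the same route as the paper's: all three parts come from manipulating minimal free resolutions, with Nakayama's lemma supplying the decisive step in part (3). Parts (1) and (2) match the paper nearly verbatim. In part (3) the paper factors the Nakayama argument into a separate statement (Lemma~\ref{lem:quotient_determines_original}: if $M/\fa M \simeq_R G$ and $\fb G = 0$, then $\fa M = 0$), applies it to $M = \cok_R(X)$ where $X$ is a lift of the minimal $R/\fa$-presentation matrix, and concludes $\cok_R(X) \simeq_R G$; you instead work directly with the syzygy $S = \ker A_0$, observe $\fb R^{b_0} \subseteq S$ so $\fa R^{b_0} \subseteq \m S$, and apply Nakayama to $S/S'$. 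Since $\cok_R(X) \simeq_R G$ is equivalent to ``the columns of $X$ generate $\ker A_0$,'' the two arguments are proving the same assertion; the paper's separation of Lemma~\ref{lem:quotient_determines_original} is motivated by reusing it elsewhere (e.g.\ in Lemma~\ref{lem:fw_general_step1}), while your inlined version is a bit more self-contained. You also correctly flag where the extra hypotheses $\fa = \m\fb$, $\fb G = 0$ are genuinely needed, which is exactly the point.
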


\begin{proof} Let $\ka = R/\m$, the residue field of $R$. Write $b_i = \beta_i^R(G)$ and $b_i' = \beta_i^{R/\fa}(G/\fa G)$ for $i = 0, 1$.
    ~
    \begin{enumerate}
        \item This follows because both sides are equal to $\dim_{\ka}(G/\m G)$.
        \item Let
        \begin{equation*}
            \dots \to R^{b_1} \to R^{b_0} \to G \to 0
        \end{equation*}
        be a minimal resolution of $M$ over $R$. Tensoring with $R/\fa$, we have an exact sequence
        \begin{equation*}
            (R/\fa)^{b_1} \to (R/\fa)^{b_0} \to G/\fa G \to 0.
        \end{equation*}
        
       Since $b_0 = b'_0$, by the definition of a minimal resolution of $G/\fa G$ over $R/\fa$, we have $b_1\geq b_1'$.
        \item Under the given hypotheses, we want to show $b'_1 = b_1$. Note that $\fb G=0$ implies $\fa G=0$, so $G$ is a finitely generated $R/\fa$-module. Using a minimal resolution of $G$ over $R/\fa$, we get a matrix $\bar X \in \Mat_{b_0\times b_1'}(R/\fa)$ such that $\cok_{R/\fa}(\bbar X) \simeq_R G$. Pick any lift $X\in \Mat_{b_0\times b_1'}(R)$ of $\bar X$, and let $M := \cok_R(X)$, then we have $M/\fa M \simeq_R M \otimes_R (R/\fa)\simeq_R G$. By Lemma \ref{lem:quotient_determines_original} (proven below), we must have $G \simeq_R M = \cok(X)$. In other words, there exists an exact sequence
        \begin{equation*}
            R^{b_1'} \map[X] R^{b_0} \to G \to 0.
        \end{equation*}
        
        By the definition of a minimal resolution of $M$ over $R$, we have $b_1'\geq b_1$. Combined with part (2), we get $b_1'=b_1$. \qedhere
    \end{enumerate}
\end{proof}

\begin{lem}\label{lem:quotient_determines_original} Let $(R,\m)$ be a Noetherian local ring. Fix an ideal $\fb \sub R$ and let $\fa := \m \fb$. If $G$ is a finitely generated $R/\fa$-module such that $\fb G = 0$, and $M$ is a finitely generated $R$-module such that $M/\fa M\simeq_R G$, then $\fa M = 0$ so that $M \simeq_R G$.
\end{lem}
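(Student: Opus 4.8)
The plan is to reduce everything to a single application of Nakayama's lemma. The only hypothesis we really exploit is the relation $\fa = \m\fb$ together with $\fb G = 0$.

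First I would translate the hypothesis $\fb G = 0$ through the isomorphism $M/\fa M \simeq_R G$: it gives $\fb\cdot(M/\fa M) = 0$, that is, $\fb M \subseteq \fa M$. Now substitute $\fa = \m\fb$ to get $\fb M \subseteq \m(\fb M)$, and hence $\fb M = \m(\fb M)$, since the reverse inclusion is automatic. The next point to check is that $\fb M$ is a finitely generated $R$-module: because $R$ is Noetherian, the ideal $\fb$ is finitely generated, and $M$ is finitely generated by assumption, so $\fb M$ is finitely generated (it is spanned by the finitely many products $f_i m_j$ of generators). Therefore Nakayama's lemma applies to the finitely generated module $\fb M$ and the maximal ideal $\m$, yielding $\fb M = 0$.

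From $\fb M = 0$ we immediately get $\fa M = \m\fb M = 0$, so $M = M/\fa M$, and combining with the given isomorphism we conclude $M \simeq_R G$, which is exactly the assertion (note that once $\fa M = 0$, the $R$-module isomorphism $M/\fa M \simeq_R G$ is literally an $R$-module isomorphism $M \simeq_R G$, consistent with $\fb M = 0 = \fb G$).

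I do not expect a genuine obstacle here; the one place to be slightly careful is the finite generation of $\fb M$, which is where Noetherianity of $R$ (rather than merely $M$ being finitely generated) enters, so that Nakayama is legitimately available. Everything else is formal.
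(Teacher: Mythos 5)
Your proof is correct and follows essentially the same route as the paper's: both reduce to showing $\fb M = \m\,\fb M$ and then invoke Nakayama's lemma on $\fb M$ to conclude $\fb M = 0$, hence $\fa M = \m\,\fb M = 0$. Your remark about verifying that $\fb M$ is finitely generated is a reasonable extra explicitness (the paper leaves it implicit), and can also be seen directly from $\fb M$ being a submodule of the finitely generated module $M$ over the Noetherian ring $R$.
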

\begin{proof} Since $\fb G=0$, we have $0=\fb(M/\fa M)=\fb M/\m \fb M$. By Nakayama's lemma, $\fb M = 0$, so $\fa M = 0$. Therefore, $G \simeq_R M/\fa M=M$.
\end{proof}

\begin{proof}[{Proof that Theorem \ref{thm:fw_general} and Lemma \ref{lem:equidistribution_residue} imply Theorem \ref{thm:fw_fiberwise}}]
Consider the set 
\[\M_{n,u,G}(R) := \{X\in \M_{n\times (n+u)}(R) : \cok(X) \simeq_R G\}.\] 

Similarly, consider the finite nonempty set 
 \[\M_{n,u,G/\fa G}(R/\fa) := \{X'\in \M_{n\times (n+u)}(R/\fa) : \cok(X') \simeq_R G/\fa G\}\]
    
We have a map $\Phi: \M_{n,u,M}(R)\to \M_{n\times (n+u)}(R/\fa)$ that sends $X$ to $(X \bmod \fa)$. By Lemma \ref{lem:equidistribution_residue}, the fibers of $\phi$ have constant measure which also implies that $\Phi$ is surjective. As a result, we have 
    \begin{equation*}
        \underset{X\in \M_{n\times (n+u)}(R)}\Prob \parens[\bigg]{\cok_R(X) \simeq_R G \text{ and } X\equiv \bar X \ppmod \fa} = \mu_{n \times (n+u)}(\Phi^{-1}(\bar X)) = \frac{\mu_{n \times (n + u)}(\M_{n,u,G}(R))}{\#\M_{n,u,G/\fa G}(R/\fa)},
    \end{equation*}
 where $\mu_{n \times (n+u)}$ is the Haar measure of $\M_{n \times (n+u)}(R)$.

\hspace{3mm} On the right-hand side, we apply Theorem \ref{thm:fw_general} for the $R$-module $G$ to the numerator and apply Theorem \ref{thm:fw_general} for the $R/\fa$-module $G/\fa G$ to the denominator. (Note that the ring $R/\fa$ and the module $G/\fa G$ satisfy the assumption of Theorem \ref{thm:fw_general}.) By Lemma \ref{lem:betti_wrt_quotient_ring} (2) $b_1'\leq b_1$, so the desired conditional probability then follows immediately.
\end{proof}

\hspace{3mm} We shall first show that Theorems \ref{thm:fw_general} and \ref{thm:fw_fiberwise} imply Proposition \ref{prep'} and Theorem \ref{Ha'}. Then we shall prove Theorem \ref{thm:fw_general} and Lemma \ref{lem:equidistribution_residue}.

\subsection{Some specifics about $\Zp[t]/(P(t))$} Throughout this subsection, assume $P(t)\in \Zp[t]$ is monic and the reduction of $P(t)$ modulo $p$ is of the form $\bar Q(t)^m$, where $m\geq 1$ and $\bar Q(t)$ is irreducible in $\Fp[t]$. In other words, we assume $l=1$ in \eqref{fac}. Then $R=\Zp[t]/(P(t))$ is a local ring\footnote{Given any maximal ideal $\mf{m}$ of $\bZ_p[t]/(P(t))$, we can show that $p \in \mf{m}$ by observing that $\mf{m}$ is finite over $\bZ_p$ and applying Nakayama's lemma. From here, it follows that the image of $Q(t)^m$ is in $\m$, so the image of $Q(t)$ must be in $\m$ so that $\mf{m} = (p, Q(t))/(P(t))$.} with maximal ideal $\m=(p,Q(t)) / (P(t))$, where $Q(t) \in \bZ_p[t]$ is any lift of $\bar Q(t)$, with the residue field $\Fp[t]/(\bar Q(t))$, a finite field of size $q:=p^{\deg \bar Q(t)}$.

\hspace{3mm} We shall apply Theorem \ref{thm:fw_fiberwise} with $\fa=pR$. The formula we get involves taking the first Betti number over the ring $R/\fa$. To explicitly compute it, we observe that $R/pR$ is a DVR quotient. Indeed, we may identify
\begin{equation*}
    \frac{R}{pR}=\frac{\Fp[t]}{(\bbar Q(t)^m)}=\frac{T}{(\pi^m)},
\end{equation*}
where $T$ is the $\bar Q(t)$-adic completion of $\Fp[t]$ and $\pi$ is the image of $\bar Q(t)$ in $T$. We note that that $T$ is a DVR with uniformizer $\pi$ and residue field $\Fq$. 

\begin{lem}\label{lem:betti_dvr_quotient}
    Let $(T, (\pi), \ka)$ be any DVR, and $m \in \bZ_{\geq 1}$. Let $G$ be a finite-length module over $T/(\pi^m)$. Then
    \begin{equation*}
        \beta_0^{T/(\pi^m)}(G) - \beta_1^{T/(\pi^m)}(G)=\dim_{\ka}(\pi^{m-1}(G)).
    \end{equation*}
\end{lem}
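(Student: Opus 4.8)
The plan is to pass to a minimal free resolution of $G$ over the ring $S := T/(\pi^m)$ and compute the first two Betti numbers directly from the structure theorem for finitely generated modules over $S$. Since $T$ is a DVR and $G$ has finite length, $G$ is a finitely generated torsion $T$-module annihilated by $\pi^m$, so by the structure theorem we may write $G \simeq_S \bigoplus_{k=1}^{r} T/(\pi^{a_k})$ with $1 \leq a_1 \leq a_2 \leq \cdots \leq a_r \leq m$, where $r = \beta_0^S(G) = \dim_\ka(G/\pi G)$. Both sides of the claimed identity are additive over direct sums: the left side because Betti numbers add over direct sums of modules, and the right side because $\dim_\ka(\pi^{m-1}G) = \sum_k \dim_\ka(\pi^{m-1}(T/(\pi^{a_k})))$. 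Hence it suffices to prove the identity for a single cyclic module $G = T/(\pi^a)$ with $1 \leq a \leq m$.

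For the cyclic case $G = S/(\pi^a) = T/(\pi^a)$, I would write down the explicit minimal resolution over $S = T/(\pi^m)$. When $a = m$, $G = S$ is free, so $\beta_0^S(G) = 1$, $\beta_1^S(G) = 0$, and indeed $\dim_\ka(\pi^{m-1}G) = \dim_\ka(\pi^{m-1}S/(\pi^m)S) = 1$, matching $1 - 0$. When $a < m$, the annihilator of $1 \in S/(\pi^a)$ in $S$ is $(\pi^{m-a})/(\pi^m)$, so the minimal resolution begins
\[
\cdots \to S \map[\pi^{a}] S \map[\pi^{m-a}] S \map[\pi^{a}] S \to S/(\pi^a) \to 0,
\]
alternating multiplication by $\pi^a$ and $\pi^{m-a}$; all these maps have entries in the maximal ideal $(\pi)$ since $0 < a, m-a < m$, so this is genuinely minimal. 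Thus $\beta_0^S(G) = \beta_1^S(G) = 1$, giving left side $1 - 1 = 0$. On the other hand $\pi^{m-1}(T/(\pi^a)) = 0$ because $m - 1 \geq a$, so the right side is also $0$. This settles the cyclic case in all subranges.

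Assembling the pieces: for general $G \simeq_S \bigoplus_{k=1}^r T/(\pi^{a_k})$, the number of indices $k$ with $a_k = m$ equals $\dim_\ka(\pi^{m-1}G)$ (each such summand contributes $1$ to $\pi^{m-1}G$, each summand with $a_k < m$ contributes $0$), while $\beta_0^S(G) = r$ and $\beta_1^S(G)$ equals the number of $k$ with $a_k < m$ (each non-free cyclic summand contributes $1$, each free summand contributes $0$). Therefore $\beta_0^S(G) - \beta_1^S(G) = r - \#\{k : a_k < m\} = \#\{k : a_k = m\} = \dim_\ka(\pi^{m-1}G)$, as desired. I do not expect a serious obstacle here; the only point requiring a little care is verifying that the $2$-periodic complex above really is a minimal resolution (exactness at each $S$ follows from the fact that in $S = T/(\pi^m)$ the ideal $\Ann(\pi^j) = (\pi^{m-j})$ for $0 \leq j \leq m$, and minimality from $0 < a, m-a$), together with the bookkeeping that Betti numbers and $\dim_\ka(\pi^{m-1}(-))$ are both additive over finite direct sums.
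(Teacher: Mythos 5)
Your proof is correct and follows essentially the same route as the paper: reduce by additivity and the structure theorem to the cyclic case $G = T/(\pi^a)$ with $1 \leq a \leq m$, then split into the subcases $a = m$ (where $G$ is free) and $a < m$ (where the kernel of $S \twoheadrightarrow G$ is minimally generated by $\pi^a$), and match both sides. The explicit $2$-periodic resolution you write down is more detail than the paper gives, but it does not change the argument.
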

\begin{proof}
By the classification of finitely generated modules over $T/(\pi^m)$, it suffices to consider the case $G = T/(\pi^a)$ with $1\leq a\leq m$. The zeroth step of the minimal resolution of $G$ is given by the quotient map $T/(\pi^m) \tra G$, so $\beta_0^{T/(\pi^m)}(G) = 1$. If $a = m$, the quotient map $T/(\pi^m) \tra G$ is an isomoprhism, so $\beta_1^{T/(\pi^m)}(G) = 0$. In this case, we also have $\dim_{\ka}(\pi^{m-1}(G)) = \dim_{\ka}(\pi^{m-1}T/\pi^{m}T) = 1$. Otherwise, we have $a \leq m-1$. Then the kernel of the quotient map $T/(\pi^m) \tra G$ is minimally generated by one generator, so $\beta_1^{T/(\pi^m)}(G) = 1$. In this case, we have $\dim_{\ka}(\pi^{m-1}(G)) = \dim_{\ka}(\pi^{m-1}T/\pi^{a}T) = 0$, finishing the proof.
\end{proof}

\hspace{3mm} When we use Theorem \ref{thm:fw_fiberwise}, we need to decipher $\beta_1^R(G)$. To further control this number, we need the following property of $R = \bZ_p[t]/(P(t))$, first observed by the first author and Yu \cite[Lemma 2.2]{CY}. We give a different proof; it is considerably shorter because it utilizes the theory of minimal resolutions.

\begin{lem} Suppose that the reduction $\bar{P}(t)$ of $P(t)$ modulo $p$ is given by $\bar{P}(t) = \bar{Q}(t)^m$ for some monic irreducible $\bar{Q}(t) \in \bF_p[t]$ and $m \in \bZ_{\geq 1}$. Then any finite-length $R$-module $G$ satisfies
    \begin{equation}
        \beta_1^R(G)\geq \beta_0^R(G).
    \end{equation}
\label{lem:ext_geq_hom}
\end{lem}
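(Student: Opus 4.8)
The plan is to reduce everything to a statement about the DVR $T$ with uniformizer $\pi = \bar Q(t)$, using the ring map $\bZ_p[t] \to R$ and the fact that $R$ is complete local with residue field $\Fq$. Recall $\beta_i^R(G) = \dim_{\Fq} \Ext^i_R(G, \Fq)$, so the claim is $\dim_{\Fq}\Ext^1_R(G,\Fq) \geq \dim_{\Fq}\Hom_R(G,\Fq) = \dim_{\Fq}(G/\m G)$. Since $R = \bZ_p[t]/(P(t))$ is a hypersurface-type quotient of the regular local ring $S := \bZ_p[t]_{(p,Q(t))}$ (or rather its completion $\widehat S$, a $2$-dimensional regular local ring) by the single element $P(t)$, the minimal free resolution of any $R$-module is eventually $2$-periodic by Eisenbud's theorem on hypersurfaces; but I do not want to invoke the full matrix factorization machinery. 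Instead, the first step I would take is to write a minimal presentation $R^{b_1} \map[A_1] R^{b_0} \map[A_0] G \to 0$ and observe that $b_0 = \dim_{\Fq}(G/\m G)$, so I must show the relation matrix $A_1$ has at least $b_0$ columns.

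The key structural input is that $G$ has finite length, hence is killed by a power of $\m$, hence $\cok(A_1) = G$ is a torsion $R$-module; equivalently the $b_0 \times b_1$ matrix $A_1$ over $R$ has the property that $\cok$ is finite. The crucial point is that $R$ is \emph{not} a domain but $p$ is the only obstruction: $R$ is a free $\bZ_p$-module of rank $d = \deg P$, and $R[1/p] = \bQ_p[t]/(P(t))$ is a product of fields (as $P$ is separable over $\bQ_p$ — wait, this need not hold). Let me instead use the cleaner route: base change to $R/pR = T/(\pi^m)$. Tensoring the minimal resolution with $R/pR$ gives an exact sequence $(R/pR)^{b_1} \to (R/pR)^{b_0} \to G/pG \to 0$, and by Lemma \ref{lem:betti_wrt_quotient_ring}(1) this computes $\beta_0^{R/pR}(G/pG) = b_0$, while $\beta_1^{R/pR}(G/pG) \leq b_1$ by part (2). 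So it suffices to prove $\beta_1^{T/(\pi^m)}(G/pG) \geq \beta_0^{T/(\pi^m)}(G/pG)$ — \emph{except} that this inequality is \emph{false} for $T/(\pi^m)$ in general (take $G/pG = T/(\pi^m)$ itself, which is free, so $\beta_1 = 0$). Hence I need the finiteness over $R$, not just over $R/pR$: the point is that a \emph{finite-length} $R$-module $G$, when reduced mod $p$, cannot be $(T/(\pi^m))$-free unless $G = 0$, because freeness mod $p$ plus Nakayama would force $G$ to be free over $R$, contradicting finite length (as $R$ is infinite).

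So the real argument is: let $F_1 \map[A_1] F_0 \to G \to 0$ be a minimal presentation over $R$, with $\rank F_0 = b_0$, $\rank F_1 = b_1$. Suppose for contradiction $b_1 < b_0$. Then the $b_0 \times b_1$ matrix $A_1$ (fewer columns than rows) has $\cok(A_1) = G$; I claim some nonzero element of $R$ (in fact some power of $p$, or better, $G$ has a free direct summand) — more precisely, localizing at the generic points: $R \otimes_{\bZ_p} \bQ_p$ is an Artinian ring, and over it $A_1$ being $b_0 \times b_1$ with $b_1 < b_0$ forces $\cok(A_1) \otimes \bQ_p \neq 0$, so $G$ is infinite, contradiction. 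The hard part will be making the "generic point" step precise when $R$ is not reduced or not a domain: I would argue that $R \otimes_{\bZ_p}\bQ_p$ is a nonzero Artinian $\bQ_p$-algebra, a cokernel of an $b_0\times b_1$ matrix over it with $b_1 < b_0$ is a nonzero module (count $\bQ_p$-dimensions: $\dim(\cok) \geq (b_0 - b_1)\dim_{\bQ_p}R\otimes\bQ_p > 0$), and this cokernel is a quotient of $G \otimes_{\bZ_p}\bQ_p$, forcing $G$ to be infinite. That is the entire obstacle; the rest is Nakayama and dimension-counting of minimal resolutions, which I would present in two or three lines. Finally I note this argument uses $l = 1$ only through $R$ being local (so that "minimal resolution" and the Betti number formula \eqref{eq:resol_rank} apply), consistent with the running hypothesis of the subsection.
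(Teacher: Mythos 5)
Your proposal is correct, and the core step is a genuinely different route than the paper's. The paper produces an actual field $K$ over which the presentation becomes a surjection of $K$-vector spaces: it factors $P(t)$ in the UFD $\bZ_p[t]$, picks a monic irreducible factor $F(t)$, forms $S := \bZ_p[t]/(F(t))$, checks that $S$ is a local domain but not a field, takes $K := \Frac(S)$, and shows $G \otimes_R K = 0$ by exhibiting a nonzero element of $\m^N S$ that annihilates $G \otimes_R S$ but is invertible in $K$. You instead tensor the minimal presentation with $\bQ_p$ over $\bZ_p$, observe that finiteness of $G$ gives $G \otimes_{\bZ_p} \bQ_p = 0$, and then dimension-count over the (possibly non-reduced) Artinian $\bQ_p$-algebra $R' := R \otimes_{\bZ_p} \bQ_p = \bQ_p[t]/(P(t))$: a surjection $(R')^{b_1} \to (R')^{b_0}$ forces $b_1 d \geq b_0 d$ since $\dim_{\bQ_p} R' = d$ is positive (here $R$ being $\bZ_p$-free of rank $d$ is used). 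One small cleanup: by right-exactness $\cok(A_1 \otimes \bQ_p)$ is not merely ``a quotient of'' $G \otimes_{\bZ_p} \bQ_p$ — it \emph{is} $G \otimes_{\bZ_p} \bQ_p$ — but this only strengthens your contradiction. What your approach buys is that it bypasses unique factorization in $\bZ_p[t]$, the verification that $S$ is a domain and not a field, and the $\m^N S$ argument, replacing all of it with a one-line dimension count; the paper's approach buys the psychological comfort of working over an honest field rather than a ring with nilpotents, but at the cost of more preparatory ring theory. Your observation that the early attempt through $R/pR = T/(\pi^m)$ must fail (since the inequality is false there for free modules) is exactly the content of the remark the paper records after this lemma, so you correctly diagnosed the need to use $\bZ_p$ rather than $\bZ/p^k\bZ$.
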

\begin{rmk}
The above lemma no longer holds if $\Zp$ is replaced by $\bZ/p^k\bZ$ with any $k \in \bZ_{\geq 1}$, even when $P(t)=t$, as can be seen from Lemma \ref{lem:betti_dvr_quotient}.
\end{rmk}

\begin{proof}[Proof of Lemma \ref{lem:ext_geq_hom}] We note that the hypotheses imply that $R = \Zp[t]/(P(t))$ is local. Let $b_i=\beta_i^R(G)$ and fix a monic lift $Q(t) \in \bZ_p[t]$ of $\bar{Q}(t)$. By choosing a minimal resolution of $G$, there exists a matrix $A\in \M_{b_0\times b_1}(R)$ such that $\cok(A)\simeq_R G$. In particular, $\cok(A)$ is of finite length. We shall find an $R$-algebra $K$ that is a field such that $\cok_K(A)=0$. If so, the existence of a $b_0\times b_1$ matrix $A$ over $K$ that gives rise to a surjective $K$-linear map would imply $b_1\geq b_0$.

\hspace{3mm} Recall that $\Zp[t]$ is a unique factorization domain. In particular, the polynomial $P(t)$ admits a factorization into monic irreducible polynomials in $\Zp[t]$. Let $F(t)$ be a monic irreducible factor of $P(t)$ in $\bZ_p[t]$, and consider the ring $S:=\Zp[t]/(F(t))$, which is a quotient of $R$. More importantly, the ring $S$ is a local domain that is not a field. (If $S$ were a field, then $F(t)R$ would be a maximal ideal of $R$. On the other hand, the unique maximal ideal of $R$ is $\m=(p, Q(t))R$, which is not $F(t)R$ because $p\notin F(t)R$.) Let $K$ be the fraction field of $S$ and view $K$ as an $R$-algebra. We now claim that $\cok_K(A)=0$. 

\hspace{3mm} It suffices to show that $G \otimes_R K=0$. Let $G' := G \otimes_R S$. Note that $\mf{m}S$ is the maximal ideal of $S$ because $S$ is a quotient of $R$. Note that, as an $R$-module, $G'$ is of finite length because it is a quotient of $G$. Thus, there exists $N \geq 0$ such that $\m^NG = 0$ so that $(\m^N S) G' = 0$ as an $S$-module. Since $S$ is a domain that is not a field, there exists $x\in \m^N S\sm \set{0}$. We have $xG'=0$, so that $x$ annihilates $G' \otimes_S K$ as well. But $x$ is invertible in $K$, which implies $G \otimes_R K \simeq_K G'\otimes_S K = 0$, and the proof is complete.
\end{proof}

\subsection{Proofs of Proposition \ref{prep'} and Theorem \ref{Ha'} assuming Theorems \ref{thm:fw_general} and \ref{thm:fw_fiberwise}}
We are now ready to prove Proposition \ref{prep'} and Theorem \ref{Ha'} assuming Theorems \ref{thm:fw_general} and \ref{thm:fw_fiberwise}.

\begin{proof}[Proofs of Proposition \ref{prep'} and Theorem \ref{Ha'} assuming Theorems \ref{thm:fw_general} and \ref{thm:fw_fiberwise}.]
Recall the factorization of $\bar P(t)$ in \eqref{fac}. By Hensel's lemma, there exists monic $Q_1(t), \dots, Q_l(t)\in \Zp[t]$ such that $P(t) = Q_1(t) \cdots Q_l(t)$ and $Q_j(t) \equiv \bar P_j(t)^{m_j}\ppmod p$. Let $R_j:=\Zp[t]/(Q_j(t))$. By the Chinese remainder theorem, we have $R \simeq_R R_1\times\dots\times R_l$ given by $x \mapsto (x \mod (Q_1), \dots, x \mod (Q_l))$. Applying this particular isomorphism, we have
\[\M_n(R) \simeq_R \M_n(R_1) \times \cdots \times \M_n(R_l),\]
and the Haar measure on $\M_n(R)$ is the product measure of the Haar measures of $\M_n(R_j)$ because of the uniqueness of the Haar measure. Hence, to prove Proposition \ref{prep'} and Theorem \ref{Ha'}, it suffices to prove them for the case $l=1$. (More details of this reduction can be found in \cite[\S 2.1]{CLS} by replacing $P_j(t)$ in the citation with $Q_j(t)$.) Therefore, we may assume from now on that $\bar{P}(t) = \bar{Q}(t)^{m}$ for some monic irreducible $\bar{Q}(t) \in \bF_p[t]$ and $m \in \bZ_{\geq 1}$. In particular, the ring $R = \Zp[t]/(P(t))$ is local. Write $d := \deg(\bar{Q})$ and $q := p^d$.

\hspace{3mm} We first assume (1) and then show (2) in Proposition \ref{prep'}. Lemma \ref{lem:ext_geq_hom} implies that $\beta_0^R(G) \leq \beta_1^R(G)$. Theorem \ref{thm:fw_general} with $u=0$ implies $\beta_0^R(G) \geq \beta_1^R(G)$. Thus, we have
\[|\Hom_{\bZ_{p}[t]}(G, \bF_{q})| = \beta_0^R(G)=\beta_1^R(G) = |\Ext_{R}^{1}(G, \bF_{q})|,\]
which is (2).

\

\hspace{3mm} Next, we assume that (2) from Proposition \ref{prep'} implies the conclusion of Theorem \ref{Ha'}. Taking $u=0$ and $\fa = pR$ in Theorem \ref{thm:fw_fiberwise} (with $J_n = \bar X$), we have $b_0 = b'_0$ and thus, applying Lemma \ref{lem:betti_dvr_quotient} (and the discussion before that), we have
\begin{align*}
b_0 - b'_1 &= b'_0 - b'_1 \\
&= \beta_0^{R/pR}(G/pG) - \beta_1^{R/pR}(G/pG) \\
&= \dim_{\bF_q}(\bar{Q}(t)^{m-1} G/pG) \\
&= u_1(G/pG).
\end{align*}
Since (2) from Proposition \ref{prep'} implies $b_0 = b_1$, we obtain Theorem \ref{Ha'}.

\

\hspace{3mm} Finally, we assume (2) and then show (1) in Proposition \ref{prep'}. We already know that (2) implies the conclusion of Theorem \ref{Ha'}. Then 
\[\underset{Z \in \M_n(R)}{\Prob}(\cok(Z) \simeq_{\bZ_p[t]} G \text{ and } Z \equiv J_n \ppmod{p}) \neq 0,\]
so we get the existence of such $Z$. This finishes the proofs of Proposition \ref{prep'} and Theorem \ref{Ha'} assuming Theorem \ref{thm:fw_fiberwise}.
\end{proof}

\hspace{3mm} For the rest of the section, we prove Theorem \ref{thm:fw_general} and Lemma \ref{lem:equidistribution_residue} (which imply Theorem \ref{thm:fw_fiberwise}) that we have deferred. Then by the previous subsection, we would establish Proposition \ref{prep'} and Theorem \ref{Ha'}, which would only leave Theorem \ref{equi} to finish the proof of Theorem \ref{main}. We collect some preliminaries in commutative algebra needed in the proofs.

\subsection{Preliminaries in commutative algebra for proofs of Theorem \ref{thm:fw_general} and Lemma \ref{lem:equidistribution_residue}} The proofs of Theorem \ref{thm:fw_general} and Lemma \ref{lem:equidistribution_residue} in the DVR case relies on the classification of finitely generated modules, namely, the Smith normal form. In order to generalize the proof to a Noetherian local ring that is not a DVR, we need to show that some nice consequences of the Smith normal form persist even in its absence. The following lemma is the key ingredient in the proof of Lemma \ref{lem:row_col_equiv} and Lemma \ref{lem:betti_bound}. The former is used in the proof of Lemma \ref{lem:equidistribution_residue}, and the latter is part of Theorem \ref{thm:fw_general} and is crucially used in Lemma \ref{lem:fw_general_step3}, the last step of the proof of Theorem \ref{thm:fw_general}. Denote by $\Sur_R(G, H)$ the set of $R$-linear surjections from $G$ to $H$, given $R$-modules $G$ and $H$.

\begin{lem}\label{lem:sur_homogeneity}  Let $(R, \m, \ka)$ be any Noetherian local ring, and $G$ be a finitely generated $R$-module. Suppose that $n \geq \beta_0^R(G)$. Then $\GL_n(R)$ acts on $\Sur_R(R^n, G)$ transitively: for any $F_1, F_2 \in \Sur_R(R^n, G)$, there is $g \in \GL_n(R)$ such that $F_2=F_1\circ g$.
\end{lem}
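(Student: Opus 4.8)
The plan is to induct on $n$ with $G$ fixed; write $b_0 := \beta_0^R(G)$, the minimal number of generators of $G$. The base case $n = b_0$ is the classical fact that two minimal generating sets of a finitely generated module over a local ring differ by an element of $\GL_{b_0}(R)$, and the inductive step will reduce $n$ by one by splitting a free rank-one summand off of a kernel.

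For the base case, let $F_1, F_2 \in \Sur_R(R^{b_0}, G)$ and put $g_i := F_1(e_i)$, $g_i' := F_2(e_i)$ for the standard basis $e_1,\dots,e_{b_0}$. Since the $g_i$ generate $G$, write $g_j' = \sum_i a_{ij} g_i$ and let $\phi_A \in \End_R(R^{b_0})$ be the endomorphism with matrix $A = (a_{ij})$; then $F_2 = F_1 \circ \phi_A$ by construction. Modulo $\m$, both $\{\bar g_i\}$ and $\{\bar g_j'\}$ are generating sets, hence bases, of the $b_0$-dimensional $\ka$-vector space $G/\m G$, so $\bar A \in \GL_{b_0}(\ka)$; thus $\det A \notin \m$ is a unit and $\phi_A \in \GL_{b_0}(R)$.

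For the inductive step, assume $n > b_0$ and that the result holds for $n - 1$ (which is $\geq b_0$). The crux is the following: \emph{if $F : R^n \to G$ is surjective with $n > b_0$, then $\ker F \not\subseteq \m R^n$.} Granting this, $\ker F$ contains a vector $v$ with a unit coordinate, hence one that is part of an $R$-basis of $R^n$; after reordering, $v$ is the first column of some $g \in \GL_n(R)$, so $(F \circ g)(e_1) = 0$ and $F \circ g$ factors as $\bar F \circ \pi$, where $\pi : R^n \to R^{n-1}$ is the projection off the first coordinate and $\bar F : R^{n-1} \to G$ is surjective. Applying this to $F_1$ and $F_2$ gives $g_1, g_2 \in \GL_n(R)$ and surjections $\bar F_1, \bar F_2 : R^{n-1} \to G$ with $F_i \circ g_i = \bar F_i \circ \pi$; by induction $\bar F_2 = \bar F_1 \circ h$ for some $h \in \GL_{n-1}(R)$, and setting $\hat h := \diag(1, h) \in \GL_n(R)$ (so that $\pi \circ \hat h = h \circ \pi$) we compute $F_2 \circ g_2 = \bar F_1 \circ h \circ \pi = \bar F_1 \circ \pi \circ \hat h = F_1 \circ g_1 \circ \hat h$, i.e.\ $F_2 = F_1 \circ (g_1 \hat h g_2^{-1})$ with $g_1 \hat h g_2^{-1} \in \GL_n(R)$, as desired.

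It remains to see $\ker F \not\subseteq \m R^n$ when $n > b_0$: from $F(\m R^n) = \m G$ one checks $F^{-1}(\m G) = \ker F + \m R^n$, so $R^n/(\ker F + \m R^n) \cong G/\m G$; if $\ker F \subseteq \m R^n$ the left side would be all of $\ka^n$, forcing $n = \dim_\ka(G/\m G) = b_0$, a contradiction. This sub-claim — that every generator beyond the minimal number forces a relation detectable modulo $\m$ — is the only real content; it rests on the identity $F^{-1}(\m G) = \ker F + \m R^n$, and I expect that (together with the routine check that a vector with a unit coordinate extends to a basis over a local ring) to be the main, and quite mild, obstacle, the remainder being bookkeeping with change-of-basis matrices and Nakayama-style reductions.
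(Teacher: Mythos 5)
Your proof is correct, but it follows a genuinely different route from the paper's. The paper fixes a minimal surjection $\varphi\colon R^{b_0}\twoheadrightarrow G$, uses projectivity of $R^n$ to lift each $F_i$ through $\varphi$ to a surjection $F_i'\colon R^n\to R^{b_0}$ (surjectivity verified by Nakayama), thereby reducing to the free-target case $G=R^{b_0}$, where an explicit block matrix $g=\begin{bmatrix}U^{-1}V & U^{-1}(B-A)\\ 0 & I_{n-b_0}\end{bmatrix}$ does the job in one stroke. You instead induct on $n$: the base case $n=b_0$ is the classical uniqueness of minimal generating sets up to $\GL$, and the inductive step peels off a free rank-one summand by exhibiting a kernel vector with a unit coordinate. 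Your key sub-claim that $\ker F\not\subseteq\m R^n$ when $n>b_0$ is correctly derived from $F^{-1}(\m G)=\ker F+\m R^n$, and the rest of the bookkeeping (extending a primitive vector to a basis over a local ring, the diagonal completion $\hat h=\diag(1,h)$, and the final composition $F_2=F_1\circ(g_1\hat h g_2^{-1})$) all checks out. The trade-off: the paper's argument is shorter and produces $g$ explicitly, while yours avoids the projective lifting step and the matrix calculation at the cost of a recursion. Both are sound applications of Nakayama in a local ring; yours is perhaps more self-contained, the paper's more constructive.
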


\begin{proof} Let $r = \dim_{\ka}(G /\m G) = \beta^R_0(G)$, the minimal number of generators for $G$. Fix an $R$-linear surjection $\varphi:R^r\onto G$. Recall that free modules are projective. That is, any diagram of $R$-modules below lifts:
    \begin{equation*}
    \begin{tikzcd}
    {R^n} \\
    A & B
    \arrow[from=1-1, to=2-2]
    \arrow[two heads, from=2-1, to=2-2]
    \arrow[dashed, from=1-1, to=2-1]
    \end{tikzcd}
    \end{equation*}
    
    Therefore, we have $R$-linear maps $F_1', F_2' : R^n\to R^r$ such that the diagram
    \begin{equation*}
    \begin{tikzcd}
	{R^n} \\
	& {R^r} & G, \\
	{R^n}
	\arrow["{F_1}", from=1-1, to=2-3]
	\arrow["{F_2}"', from=3-1, to=2-3]
	\arrow["\varphi", from=2-2, to=2-3]
	\arrow["{F_1'}"', from=1-1, to=2-2]
	\arrow["{F_2'}", from=3-1, to=2-2]
\end{tikzcd}
    \end{equation*}
is commutative.

\hspace{3mm} Tensoring the diagram with $\ka = R/\m$, the map $\varphi$ becomes an isomorphism of $\ka$-vector spaces by the assumption that the minimal number of generators of $G$ is $r$. For $i=1,2$, since the mod-$\m$ reduction of $F_i$ is surjective, so is the mod-$\m$ reduction $\bbar F_i'$ of $F_i'$. By Nakayama's lemma, $F_i'$ is surjective. Hence, we may replace $G$ by $R^r$, and we have reduced to the case where $G$ is a free module $R^r$, and $F_1,F_2$ are surjective $r\times n$ matrices.

\hspace{3mm} We now claim that there exists $g\in \GL_n(R)$ such that $F_2=F_1 g$. For $i=1,2$, by right-multiplying $F_i$ by a matrix in $\GL_r(\ka)$ if necessary, we may assume the first $r$ columns of $\bbar F_i$ span $\ka^r$. Write
    \begin{equation*}
        F_1=\begin{bmatrix}U & A\end{bmatrix} \text{ and } \;\;F_2=\begin{bmatrix}V & B\end{bmatrix}, \text{ where } \;\; U,V\in \Mat_r(R) \text{ and } \;\; A,B\in \Mat_{n-r}(R).
    \end{equation*}

    By our assumption, $U,V$ are invertible mod $\m$, thus invertible over $R$. Considering
    \begin{equation*}
        g := \begin{bmatrix}
            U^{-1}V &U^{-1}(B-A) \\
            0 & I_{n-r}
        \end{bmatrix}\in \GL_n(R),
    \end{equation*}
    we have $F_2=F_1g$ as desired.    
\end{proof}

\begin{rmk} Lemma \ref{lem:sur_homogeneity} can also be deduced from \cite[Theorem 20.2]{E}.
\end{rmk}

\hspace{3mm} Theorem \ref{thm:fw_general} concerns all matrices with a fixed cokernel up to isomorphism. We now show that all such matrices are row-column-equivalent, as they are in the DVR case. More precisely, we have the following. (Technically, we do not need it for the proof of Theorem \ref{thm:fw_general}, but we use it in the proof of Lemma \ref{lem:equidistribution_residue}.)

\begin{lem}\label{lem:row_col_equiv}
    Let $R$ be a Noetherian local ring and $m, n \in \bZ_{\geq 1}$. Consider any two $m\times n$ matrices over $R$ or equivalently, $R$-linear maps $A,B:R^n\to R^m$. Then
    \begin{enumerate}
        \item $\im(A)=\im(B)$ as submodules of $R^m$ if and only if $A$ and $B$ are \textbf{column-equivalent}, namely, $Ag=B$ for some $g\in \GL_n(R)$.
        \item Let $N_1,N_2$ be submodules of $R^m$. Then $R^m/N_1$ and $R^m/N_2$ are isomorphic as $R$-modules if and only if $N_1$ and $N_2$ are \textbf{row-equivalent}, namely, $gN_1=N_2$ for some $g\in \GL_m(R)$.
        \item $\cok(A)$ and $\cok(B)$ are isomorphic as $R$-modules if and only if $A$ and $B$ are \textbf{row-column-equivalent}, namely, $gAg'=B$ for some $g\in \GL_m(R)$ and $g'\in \GL_n(R)$.
    \end{enumerate}
\end{lem}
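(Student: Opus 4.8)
The plan is to prove the three claims in order, noting that (3) follows formally from (1) and (2), so the real content is in establishing the first two. Throughout, I would freely use Lemma \ref{lem:sur_homogeneity}: over a Noetherian local ring $R$, if $n \geq \beta_0^R(G)$ then $\GL_n(R)$ acts transitively on $\Sur_R(R^n, G)$.

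For part (1), the ``if'' direction is immediate: if $Ag = B$ with $g \in \GL_n(R)$, then since $g$ is a bijection on $R^n$ we have $\im(B) = \im(Ag) = A(g(R^n)) = A(R^n) = \im(A)$. For the ``only if'' direction, suppose $\im(A) = \im(B) =: N \subseteq R^m$. Both $A$ and $B$ factor as surjections $R^n \onto N$ followed by the inclusion $N \hookrightarrow R^m$; call these surjections $\tilde A, \tilde B \in \Sur_R(R^n, N)$. Since $A$ is surjective onto $N$, we have $n \geq \beta_0^R(N)$, so Lemma \ref{lem:sur_homogeneity} applies to $N$: there is $g \in \GL_n(R)$ with $\tilde B = \tilde A \circ g$. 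Composing with the inclusion gives $B = A g$.

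For part (2), the ``if'' direction is again easy: an automorphism $g \in \GL_m(R)$ restricting to an isomorphism $N_1 \xrightarrow{\sim} N_2$ descends to an isomorphism $R^m/N_1 \xrightarrow{\sim} R^m/N_2$. For the ``only if'' direction, suppose $\phi: R^m/N_1 \xrightarrow{\sim} R^m/N_2$ is an $R$-module isomorphism. Let $\pi_i: R^m \onto R^m/N_i$ be the quotient maps. Since $R^m$ is free (hence projective), the composite $\phi \circ \pi_1: R^m \to R^m/N_2$ lifts through $\pi_2$ to a map $g: R^m \to R^m$ with $\pi_2 \circ g = \phi \circ \pi_1$; similarly lift $\phi^{-1} \circ \pi_2$ to $h: R^m \to R^m$ with $\pi_1 \circ h = \phi^{-1} \circ \pi_2$. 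Then $\pi_1 \circ (hg) = \phi^{-1} \circ \pi_2 \circ g = \phi^{-1} \circ \phi \circ \pi_1 = \pi_1$, so $hg - I_m$ maps $R^m$ into $N_1 \subseteq \m R^m$; hence $hg \equiv I_m \pmod{\m}$, so $hg \in \GL_m(R)$ by Nakayama (or since $\det(hg)$ is a unit). Symmetrically $gh \in \GL_m(R)$, so $g \in \GL_m(R)$. Finally, from $\pi_2 \circ g = \phi \circ \pi_1$ we get $g(N_1) = g(\ker \pi_1) \subseteq \ker(\phi \circ \pi_1) = \ker(\pi_2 \circ g)$; since $g$ is invertible, $g(N_1) = \ker \pi_2 = N_2$. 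Wait — I should double-check: $g(\ker\pi_1) \subseteq \ker(\pi_2\circ g)$ only if... actually $x \in \ker\pi_1 \Rightarrow \phi\pi_1 x = 0 \Rightarrow \pi_2 g x = 0 \Rightarrow gx \in \ker\pi_2 = N_2$, giving $g(N_1) \subseteq N_2$; applying the same to $h$ and $\phi^{-1}$ gives $h(N_2) \subseteq N_1$, and since $g, h$ are mutually inverse this forces $g(N_1) = N_2$.

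For part (3), combine the previous two. If $gAg' = B$ with $g \in \GL_m(R)$, $g' \in \GL_n(R)$, then $g$ induces an isomorphism $\cok(A) = R^m/\im(A) \xrightarrow{\sim} R^m/\im(gA) = R^m/\im(gAg') = \cok(B)$ (the last equality by part (1) applied to $gA$ and $gAg' = B$, or directly since $g'$ is invertible). Conversely, suppose $\cok(A) \simeq_R \cok(B)$. By part (2) applied with $N_1 = \im(A)$, $N_2 = \im(B)$, there is $g \in \GL_m(R)$ with $g(\im A) = \im(B)$, i.e.\ $\im(gA) = \im(B)$. By part (1) applied to the matrices $gA$ and $B$ (which now have equal image), there is $g' \in \GL_n(R)$ with $gAg' = B$, as desired. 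The only mildly delicate point in the whole argument is the lifting-and-Nakayama step in part (2) showing the lift $g$ is actually invertible, but this is standard; no serious obstacle arises since all the real work has been pushed into Lemma \ref{lem:sur_homogeneity}.
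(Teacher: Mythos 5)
Parts (1) and (3) are fine, and (1) matches the paper's argument. But your proof of part (2) has a genuine gap, precisely at the step you flagged as ``mildly delicate.'' You claim that because $\pi_1\circ(hg)=\pi_1$, the map $hg - I_m$ sends $R^m$ into $N_1\subseteq\m R^m$. The inclusion $N_1\subseteq\m R^m$ is false in general: $N_1$ is an arbitrary submodule of $R^m$ and may well contain a basis vector of $R^m$. For a concrete counterexample, take $R$ any field, $m=2$, $N_1=N_2=Re_1$, $\phi=\mathrm{id}$. Then the projection $g=h=\diag(0,1)$ is a perfectly valid lift of $\phi\circ\pi_1$ through $\pi_2$, and $(hg-I_2)(R^2)=Re_1=N_1$, yet $hg=\diag(0,1)$ is not congruent to $I_2$ modulo $\m$ and $g$ is not invertible. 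The subsequent claim that ``$g,h$ are mutually inverse'' is therefore unsupported, and the whole argument that a lift obtained from bare projectivity is automatically invertible does not go through.

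The fix is to not lift naively but to invoke Lemma \ref{lem:sur_homogeneity} for part (2) as well, which is exactly where the invertibility is manufactured. Set $M:=R^m/N_1\simeq_R R^m/N_2$ and view the quotient map $\pi_1:R^m\onto M$ and the composite $\phi^{-1}\circ\pi_2:R^m\onto M$ as two elements of $\Sur_R(R^m,M)$; since $R^m$ surjects onto $M$ we have $m\geq\beta_0^R(M)$, so Lemma \ref{lem:sur_homogeneity} produces $g\in\GL_m(R)$ with $\phi^{-1}\circ\pi_2\circ g=\pi_1$, i.e.\ $\pi_2\circ g=\phi\circ\pi_1$. Since $g$ is invertible by construction, the inclusion $g(N_1)\subseteq N_2$ is an equality by comparing the two short exact sequences $0\to N_i\to R^m\to M\to 0$. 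This is the paper's route; you had the right tool in hand from the start but set it down exactly where it was most needed.
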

\begin{proof}~
    \begin{enumerate}
        \item The backward implication is trivial. For the forward implication, write $M=\im (A)=\im (B) \subeq R^m$ so that we can consider $A,B\in \Sur_R(R^n,M)$. By Lemma \ref{lem:sur_homogeneity}, there is $g\in \GL_n(R)$ such that $A\circ g=B$ as maps from $R^n$ to $M$. Composed with the inclusion map of $M$ into $R^m$, we have $Ag=B$ as matrices.
        \item The backward implication is evident, since $g\in \GL_m(R)$ induces an isomorphism from $R^m/N_1$ to $R^m/gN_1$. For the forward implication, let $M = R^m/N_1 \simeq_R R^m/N_2$. Then we have the following commutative diagram of $R$-linear maps, whose rows are exact:
        \begin{equation*}
        \begin{tikzcd}
	0 & {N_1} & {R^m} & M & 0 \\
	0 & {N_2} & {R^m} & M & 0,
	\arrow[from=1-1, to=1-2]
	\arrow[from=1-2, to=1-3]
	\arrow[from=1-3, to=1-4]
	\arrow[from=1-4, to=1-5]
	\arrow[from=2-1, to=2-2]
	\arrow[from=2-2, to=2-3]
	\arrow[from=2-3, to=2-4]
	\arrow[from=2-4, to=2-5]
	\arrow["\mathrm{id}"', from=1-4, to=2-4]
	\arrow["g", from=1-3, to=2-3]
        \end{tikzcd}
        \end{equation*}
        where $g$ is constructed from Lemma \ref{lem:sur_homogeneity} applied to the two quotient maps $R^m\to M$ induced by $N_1$ and $N_2$. Therefore, we have $gN_1=N_2$.
        \item The backward implication is trivial. For the forward implication, if $\cok(A) \simeq_R \cok(B)$, then $N_1=\im(A)$ and $N_2=\im(B)$ satisfy the assumption of (2), so $\im(B)=g\cdot \im(A)$ for some $g\in \GL_m(R)$. Thus $\im(B)=\im(gA)$, so by (1), there is $g'\in \GL_n(R)$ such that $B=gAg'$.
    \end{enumerate}
This finishes the proof.
\end{proof}

\hspace{3mm} The following lemma is a part of Theorem \ref{thm:fw_general}.

\begin{lem}\label{lem:betti_bound}
    Let $R$ be a Noetherian local ring and $G$ a finitely generated $R$-module. Write $b_i:=\beta_i^R(G)$. For integers $n \geq 1$ and $u\geq 0$, if there exists $X\in \M_{n\times (n+u)}(R)$ with $\cok(X)\simeq_R G$, then $n\geq b_0\geq b_1-u$.
\end{lem}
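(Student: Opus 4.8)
The plan is to read off from the hypothesis the finite free presentation $R^{n+u} \xra{X} R^n \to G \to 0$ and compare it against a minimal presentation of $G$ by means of Schanuel's lemma. The bound $n \geq b_0$ is immediate: exactness of $R^n \to G \to 0$ shows that $G$ is generated by $n$ elements, hence $b_0 = \beta_0^R(G) = \dim_\ka(G/\m G) \leq n$ by Nakayama's lemma.

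For the inequality $b_0 \geq b_1 - u$, I would introduce $M := \im(X) = \ker(R^n \tra G)$. On one hand, $X$ surjects $R^{n+u}$ onto $M$, so $M$ is generated by at most $n+u$ elements and thus $\beta_0^R(M) \leq n+u$. On the other hand, I would fix a minimal resolution $\cdots \to R^{b_1} \map[A_1] R^{b_0} \map[A_0] G \to 0$ and set $M' := \ker(A_0) = \im(A_1)$; condition (2) in the definition of a minimal resolution says precisely that $\beta_0^R(M') = b_1$.

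Next I would apply Schanuel's lemma to the two short exact sequences $0 \to M \to R^n \to G \to 0$ and $0 \to M' \to R^{b_0} \to G \to 0$ (both middle terms are free, hence projective) to obtain $M \oplus R^{b_0} \simeq_R M' \oplus R^n$. Applying $\beta_0^R(-) = \dim_\ka\bigl((-)\otimes_R \ka\bigr)$, which is additive on direct sums and satisfies $\beta_0^R(R^k) = k$, gives $\beta_0^R(M) + b_0 = b_1 + n$. Substituting $\beta_0^R(M) \leq n + u$ yields $n + u + b_0 \geq b_1 + n$, i.e.\ $b_0 \geq b_1 - u$.

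I do not anticipate a genuine obstacle here; the only point requiring care is that Schanuel's lemma and the extraction of $\beta_0$ via $-\otimes_R \ka$ and Nakayama's lemma all work verbatim over an arbitrary Noetherian local ring, so that no DVR or PID structure is used. As an alternative to invoking Schanuel's lemma, one could instead use the fact that over $(R,\m)$ any finite free presentation of $G$ is the direct sum of a minimal one with a trivial complex of the form $R^s \map[I_s] R^s$, which produces the same numerical identity $\beta_0^R(M) + b_0 = b_1 + n$.
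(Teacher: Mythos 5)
Your proof is correct, and it reaches the same intermediate identity $\beta_0^R(M) = n + b_1 - b_0$ (where $M = \im(X) = \ker(R^n \tra G)$) that drives the paper's argument, but by a genuinely different route. The paper first proves a transitivity lemma (Lemma \ref{lem:sur_homogeneity}: $\GL_n(R)$ acts transitively on $\Sur_R(R^n,G)$ once $n \geq b_0$), deduces that the first syzygy of $G$ inside $R^n$ is well-defined up to isomorphism, and then computes it by choosing the convenient surjection $A_0 \oplus 0 : R^{b_0} \oplus R^{n-b_0} \tra G$ whose kernel is $M_0 \oplus R^{n-b_0}$ with $\beta_0^R(M_0)=b_1$. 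You instead invoke Schanuel's lemma on the two presentations $0\to M\to R^n\to G\to 0$ and $0\to M'\to R^{b_0}\to G\to 0$ to get $M\oplus R^{b_0}\simeq_R M'\oplus R^n$ directly, then read off $\beta_0^R(M)$ by additivity of $\dim_\ka(-\otimes_R\ka)$. Both are valid over any Noetherian local ring; Schanuel's lemma is shorter and more self-contained for this one statement, whereas the paper's Lemma \ref{lem:sur_homogeneity} is a workhorse it reuses elsewhere (in Lemma \ref{lem:row_col_equiv} and in Lemma \ref{lem:fw_general_step3}), so the paper pays the setup cost once and amortizes it. Your closing remark about free presentations splitting as a minimal one plus a trivial complex $R^s \map[I_s] R^s$ is in fact essentially what the paper's construction of $A'$ realizes.
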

\begin{proof}
    Consider the exact sequence
    \begin{equation*}
        R^{n+u} \map[X] R^{n} \map[A] G \to 0,
    \end{equation*}
where $A$ is the $R$-linear map given by $R^n \tra R^n/XR^{n+u} = \cok(X) \simeq_R G$, and let $M := \ker(A) \sub R^n$. From the existence of the surjection $A$, it follows that $n\geq b_0$. From the existence of the $R$-linear surjection $X:R^{n+u}\to M$, it follows that $n+u\geq \beta_0^R(M)$. To prove $b_0\geq b_1-u$, it suffices to show that
\[\beta_0^R(M)=n+b_1-b_0.\]

\hspace{3mm} By Lemma \ref{lem:sur_homogeneity}, if $A'$ is any $R$-linear surjection from $R^n$ to $M$, then $\ker(A')$ is isomorphic to $M = \ker(A)$ and thus $\beta_0^R(\ker(A')) = \beta_0^R(M)$. We construct a convenient choice of $A'$ below. Pick a minimal resolution
    \begin{equation*}
        \dots\to R^{b_1} \to R^{b_0} \map[A_0] G \to 0
    \end{equation*}
    of $G$, and write $M_0:=\ker(A_0)$. Then $\beta_0^R(M_0)=b_1$ by the definition of a minimal resolution. Now construct $A' := A_0\oplus 0: R^{b_0}\oplus R^{n-b_0} \onto G$, then $\ker(A') = M_0 \oplus R^{n-b_0}$. It follows that
\[\beta_0^R(M)=\beta_0^R(M_0\oplus R^{n-b_0})=b_1 + (n - b_0) = n + b_1 - b_0,\]
as desired.
\end{proof}

\hspace{3mm} Similar to the proof by Friedman--Washington \cite{FW} in the DVR case, we reduce the Haar-measure statement in Theorem \ref{thm:fw_general} into a counting statement by passing to a sufficiently large finite quotient of $R$. We need the following lemmas in the reduction step. 

\begin{rmk} In the reduction step in the proof of Theorem \ref{thm:fw_general}, we shall apply Lemma \ref{lem:betti_wrt_quotient_ring} (3) with $\fa=\m^L$ and $\fb=\m^{L-1}$, where $L$ is large enough so that $\fb M=0$.
\end{rmk}

\subsection{Proof of Theorem \ref{thm:fw_general}}
\label{sec:fw_general_proof}

The ``only if'' direction of the existence statement of Theorem \ref{thm:fw_general} follows from Lemma \ref{lem:betti_bound}. Once the probability formula \eqref{eq:fw_general} of Theorem \ref{thm:fw_general} is proved, the ``if'' direction of the existence statement follows from the fact that the probability is nonzero. Hence it suffices to prove \eqref{eq:fw_general}, under the assumption that $n \geq b_0 \geq b_1 - u$, where $b_i := \beta_i^R(G)$. We carry this out in three steps.

\begin{lem}[Step 1]\label{lem:fw_general_step1}
    To prove \eqref{eq:fw_general}, it suffices to prove the case when $R$ is of finite size.
\end{lem}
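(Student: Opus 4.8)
The plan is to reduce everything to a sufficiently large finite Artinian quotient of $R$, exactly as Friedman--Washington do in the DVR case, using that the event $\{\cok(X)\simeq_R G\}$ depends only on the class of $X$ modulo a fixed power of $\m$.

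First I would fix an integer $L\geq 1$ with $\m^{L-1}G=0$, which exists since $G$ has finite length, and set $\tilde R:=R/\m^L$. Then $\tilde R$ is a finite local ring: each $\m^i/\m^{i+1}$ is a finite-dimensional $\Fq$-vector space, so $R/\m^L$ is finite, hence an Artinian (thus complete Noetherian) local ring with residue field $\Fq$, to which the finite-size case of Theorem \ref{thm:fw_general} applies. I would then record three compatibilities between $R$ and $\tilde R$: the module $G$ is naturally a $\tilde R$-module with $\Aut_{\tilde R}(G)=\Aut_R(G)$ and $|G|$ unchanged; $\beta_0^{\tilde R}(G)=\beta_0^R(G)$ by Lemma \ref{lem:betti_wrt_quotient_ring}(1); and $\beta_1^{\tilde R}(G)=\beta_1^R(G)$ by Lemma \ref{lem:betti_wrt_quotient_ring}(3) applied with $\fb=\m^{L-1}$ and $\fa=\m\fb=\m^L$ (legitimate since $\fb G=0$). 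In particular the range $n\geq b_0\geq b_1-u$ and all the exponents appearing in \eqref{eq:fw_general} are the same whether computed over $R$ or over $\tilde R$.

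Next I would prove the key equivalence: for $X\in\M_{n\times(n+u)}(R)$ with reduction $\bar X\in\M_{n\times(n+u)}(\tilde R)$, one has $\cok_R(X)\simeq_R G$ if and only if $\cok_{\tilde R}(\bar X)\simeq_{\tilde R}G$. The forward direction is a one-line base change: if $\cok_R(X)\simeq_R G$, then $\cok_{\tilde R}(\bar X)\simeq_R\cok_R(X)\otimes_R\tilde R\simeq_R\cok_R(X)/\m^L\cok_R(X)\simeq_R G/\m^L G=G$. For the converse, write $M:=\cok_R(X)$; the hypothesis says $M/\m^L M\simeq_R G$, and since $G$ is an $\tilde R$-module killed by $\fb=\m^{L-1}$, Lemma \ref{lem:quotient_determines_original} (with $\fa=\m^L=\m\fb$) yields $\m^L M=0$ and $M\simeq_R G$.

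Finally, since $R$ is complete Noetherian local with finite residue field it is a profinite ring, so the reduction homomorphism $\M_{n\times(n+u)}(R)\to\M_{n\times(n+u)}(\tilde R)$ pushes the normalized Haar measure to the uniform measure on the finite target (the kernel $\m^L\M_{n\times(n+u)}(R)$ is an open subgroup of finite index and all its cosets have equal measure). Combining this with the equivalence, the event $\{\cok_R(X)\simeq_R G\}$ is the full preimage of $\{\cok_{\tilde R}(\bar X)\simeq_{\tilde R}G\}$, so
\[
\underset{X\in\M_{n\times(n+u)}(R)}{\Prob}\bigl(\cok(X)\simeq_R G\bigr)=\underset{\bar X\in\M_{n\times(n+u)}(\tilde R)}{\Prob}\bigl(\cok(\bar X)\simeq_{\tilde R}G\bigr),
\]
and applying \eqref{eq:fw_general} for the finite ring $\tilde R$ (whose Betti numbers, $|G|$, and $|\Aut(G)|$ match those over $R$) gives \eqref{eq:fw_general} for $R$. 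I expect the only real subtlety to be the backward direction of the equivalence: passing from $M/\m^L M\simeq_R G$ to $M\simeq_R G$ genuinely requires the extra room $\m^{L-1}G=0$ rather than merely $\m^L G=0$, which is exactly why Lemma \ref{lem:quotient_determines_original} is stated with $\fa=\m\fb$; the rest of the argument is routine bookkeeping.
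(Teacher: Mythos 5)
Your proof is correct and follows essentially the same route as the paper: reduce modulo $\m^L$ with $\m^{L-1}G=0$, use Lemma \ref{lem:quotient_determines_original} for the ``$\cok_R(X)\simeq_R G$ iff $\cok_{R/\m^L}(\bar X)\simeq G$'' equivalence, invoke Lemma \ref{lem:betti_wrt_quotient_ring}(3) to match Betti numbers, and observe the Haar measure pushes forward to the uniform measure on the finite quotient. The only (harmless) difference is that you allow $L\geq 1$ where the paper takes $L\geq 2$; when $G\neq 0$ the condition $\m^{L-1}G=0$ forces $L\geq 2$ anyway.
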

\begin{proof}
    Assume \eqref{eq:fw_general} with the hypothesis $n \geq b_0 \geq b_1 - u$ holds for any finite-sized local ring $R$. Now, let $R$ and $G$ be given as in Theorem \ref{thm:fw_general}, where $R$ is not necessarily of finite size. Suppose that $n \geq \beta_0(G) \geq \beta_1(G) - u$. Since $G$ is of finite length, there exists $L \in \bZ_{\geq 2}$ such that $\m^{L-1}G=0$. For any $X\in \Mat_{n\times (n+u)}(R)$, we denote by $\bar X$ the residue class of $X$ modulo $\m^L$. Since 
\[\cok_{R/\m^L}(\bar X) \simeq_R \cok(X)\otimes_R R/\m^L \simeq_R \cok(X)/\m^L\cok(X),\]
by Lemma \ref{lem:quotient_determines_original} with $\fb=\m^{L-1}$ and $M = \cok(X)$, we have $\cok(X)\simeq_R G$ if and only if $\cok_{R/\m^L}(\bbar X) \simeq_R G$. Moreover, by Lemma \ref{lem:betti_wrt_quotient_ring} (3) with $\fb = \m^{L-1}$, we have $\beta_i^{R/\m^{L}}(M)=\beta_i^R(M)$ for $i=0,1$. Hence, both sides of \eqref{eq:fw_general} are unchanged if we replace $R$ by $R/\m^{L}$ everywhere. Therefore, the equality in \eqref{eq:fw_general} holds by our assumption.
\end{proof}

\hspace{3mm} For the rest of the proof, we assume $R$ is a finite-sized local ring. Our goal is to count the cardinality of $\set{X\in \M_{n\times (n+u)}(R): \cok(X) \simeq_R G}$. We divide this in two steps: we first count the number of all possible images of $X$ in the set we count, and then count the number of such $X$ with a given image. We may immediately notice that the image of any such $X$ must be a submodule $M \sub R^n$ such that $R^n/M \simeq_R G$, and any such matrix $X$ with a given image $M$ corresponds to an $R$-linear surjection from $R^{n+u}$ to $G$. The following lemma is due to Cohen and Lenstra \cite[Proposition 3.1 (iii)]{CL}:

\begin{lem}[Step 2]\label{lem:fw_general_step2} Let $(R, \m, \Fq)$ be a local ring of finite size and $G$ a finite-sized $R$-module. If $n \geq \beta_0^R(G) = b_0$, then the number of submodules of $R^n$ with quotient $G$ is given by
    \begin{equation*}
        \#\set{M \leqslant R^n: R^n/M \simeq_R G} = \frac{\abs{G}^n}{\abs{\Aut_R(G)}} \prod_{i=n-b_0+1}^n (1-q^{-i}).
    \end{equation*}
\end{lem}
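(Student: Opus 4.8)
The plan is to count pairs: a surjection $R^n \onto G$ together with its kernel, and then divide out by the isomorphisms of $G$. First I would observe that there is a natural surjective map
\[
\Sur_R(R^n, G) \longrightarrow \set{M \leqslant R^n : R^n/M \simeq_R G}
\]
sending an $R$-linear surjection $\varphi$ to $\ker(\varphi)$. The fibers of this map are the orbits of the right action of $\Aut_R(G)$ on $\Sur_R(R^n, G)$ by post-composition; this action is free, since an automorphism $\alpha$ of $G$ with $\alpha \circ \varphi = \varphi$ must be the identity because $\varphi$ is surjective. Hence every fiber has exactly $\abs{\Aut_R(G)}$ elements, and we get
\[
\#\set{M \leqslant R^n : R^n/M \simeq_R G} = \frac{\abs{\Sur_R(R^n, G)}}{\abs{\Aut_R(G)}}.
\]
So the lemma reduces to proving $\abs{\Sur_R(R^n, G)} = \abs{G}^n \prod_{i=n-b_0+1}^n (1 - q^{-i})$.

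To count $\Sur_R(R^n, G)$, I would use the short exact sequence $0 \to \m G \to G \to G/\m G \to 0$ and the fact (Nakayama) that an $R$-linear map $R^n \to G$ is surjective if and only if its reduction $R^n \to G/\m G$ is surjective as a map of $\Fq$-vector spaces. Since $\Hom_R(R^n, G) = G^n$ has cardinality $\abs{G}^n$ and $\Hom_R(R^n, G/\m G) = (G/\m G)^n = \Fq^{n b_0}$ has cardinality $q^{n b_0}$, and the reduction map $\Hom_R(R^n, G) \to \Hom_R(R^n, G/\m G)$ is surjective (it is an $R$-linear, hence group, surjection with each fiber of the same size $\abs{\m G}^n$), I can compute $\abs{\Sur_R(R^n,G)}$ as $\abs{\m G}^n$ times the number of surjective $\Fq$-linear maps $\Fq^n \to \Fq^{b_0}$. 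The number of surjective linear maps $\Fq^n \to \Fq^{b_0}$ is the standard count $\prod_{i=0}^{b_0 - 1}(q^n - q^i)$, and assembling $\abs{\m G}^n \cdot \prod_{i=0}^{b_0-1}(q^n - q^i)$ together with $\abs{\m G}^n = (\abs{G}/q^{b_0})^n$ gives, after factoring $q^n$ out of each term, exactly $\abs{G}^n \prod_{i=n-b_0+1}^n (1 - q^{-i})$.

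The only mild subtlety — and the step I would be most careful about — is the surjectivity and equal fiber size of the reduction map $\Hom_R(R^n,G) \to \Hom_R(R^n,G/\m G)$; this is immediate here because $\Hom_R(R^n,-)$ is exact on $0 \to \m G \to G \to G/\m G \to 0$ (as $R^n$ is free, hence projective, or simply because $\Hom_R(R^n,-) = (-)^n$), so the fibers are cosets of $\Hom_R(R^n,\m G) = (\m G)^n$. Everything else is bookkeeping with finite cardinalities. Since this counting statement is exactly \cite[Proposition 3.1 (iii)]{CL}, an alternative is to simply invoke that result; I would include the short argument above for completeness.
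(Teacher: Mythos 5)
Your proposal is correct and follows essentially the same route as the paper: identify the set of submodules with $\Aut_R(G)$-orbits of $\Sur_R(R^n,G)$, use freeness of the action to divide by $\abs{\Aut_R(G)}$, then count $\Sur_R(R^n,G)$ via Nakayama by reducing modulo $\m$ to the standard count of surjective $\Fq$-linear maps $\Fq^n \to \Fq^{b_0}$. Your extra remark on the exactness of $\Hom_R(R^n,-)$ and the resulting equal fiber sizes makes explicit a point the paper handles by phrasing the count as a probability, but the argument is the same.
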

\begin{proof}
    We note that $\set{M \ls R^n: R^n/M \simeq_R  G}$ can be identified with the set of $\Aut_R(G)$-orbits of $\Sur_R(R^n, G)$, where $\Aut_R(G)$ acts on $\Sur_R(R^n, G)$ by composition: that is, given any $\phi_1, \phi_2 \in \Sur_R(R^n, G)$, we have $\ker(\phi_1) = \ker(\phi_2)$ if and only if $\phi_2 = \sg \circ \phi_1$ for some $\sg \in \Aut_R(G)$. The action is free: if $A \in \Sur_R(R^n, G)$ and $\sigma\in \Aut_R(G)$ satisfies $\sigma \circ A = A$, then $\sigma$ must be the identity because $A$ is surjective. Therefore, the orbit-stabilizer theorem implies that every orbit has the size $\abs{\Aut_R(G)}$, so
    \begin{equation*}
        \#\set{M \leqslant R^n: R^n/M \simeq_R G} = \frac{\abs{\Sur_R(R^n, G)}}{\abs{\Aut_R(G)}}.
    \end{equation*}

    We now compute $\abs{\Sur_R(R^n, G)}$. By Nakayama's lemma, an $R$-linear map $A : R^n\to G$ is surjective if and only if its mod-$\m$ reduction $\bar A: \Fq^n \to G/\m G$ is surjective. Therefore, the probability that a uniformly random $A\in \Hom_R(R^n, G)$ be surjective is
    \begin{equation}\label{eq:sur_prob}
        \frac{\abs{\Sur_ \Fq(\Fq^n, \Fq^{b_0})}}{\abs{\Hom_\Fq(\Fq^n,  \Fq^{b_0})}} = \prod_{i=n-b_0+1}^n (1-q^{-i}).
    \end{equation}

    Since $\abs{\Hom_R(R^n, G)}=\abs{G}^n$, the result follows.
\end{proof}

\begin{lem}[Step 3]\label{lem:fw_general_step3}
    Assume $(R, \m, \Fq)$ is a local ring of finite size and $M \sub R^n$ is a submodule. Let $G :=R ^n/M$. Then
    \begin{equation*}
        \abs{\Sur_R(R^{n+u},M)}=\frac{\abs{R}^{n(n+u)}}{\abs{G}^{n+u}}\prod_{i=u+b_0-b_1}^{n+u} (1-q^{-i}),
    \end{equation*}
    where $b_i=\beta_i^R(G)$. In particular, the quantity depends only on the isomorphism class of $G$, but not on $M$.
\end{lem}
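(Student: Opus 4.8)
The plan is to reduce the count of $\Sur_R(R^{n+u}, M)$ to a linear-algebra count over the residue field $\Fq$; the only substantive input is the number $\beta_0^R(M)$ of minimal generators of $M$, which is essentially already computed inside the proof of Lemma \ref{lem:betti_bound}. Write $\beta := \beta_0^R(M) = \dim_{\Fq}(M/\m M)$.

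First I would pin down $\beta$ and, along the way, observe that $M$ is determined up to $R$-module isomorphism by $G$ and $n$. Both facts are essentially contained in the proof of Lemma \ref{lem:betti_bound}: the quotient map $R^n \tra R^n/M = G$ forces $n \geq b_0 = \beta_0^R(G)$, so Lemma \ref{lem:sur_homogeneity} applies and shows the kernel $M$ of this surjection is isomorphic to the kernel of any other surjection $R^n \tra G$; comparing with $A_0 \oplus 0 : R^{b_0} \oplus R^{n-b_0} \tra G$ built from a minimal resolution $\cdots \to R^{b_1} \xrightarrow{A_1} R^{b_0} \xrightarrow{A_0} G \to 0$ gives $M \simeq_R M_0 \oplus R^{n-b_0}$, where $M_0 := \ker(A_0)$ has $\beta_0^R(M_0) = b_1$, so $\beta = b_1 + (n - b_0) = n + b_1 - b_0$. (Equivalently, tensoring $0 \to M \to R^n \to G \to 0$ with $\Fq$ gives $0 \to \Tor_1^R(G, \Fq) \to M/\m M \to \Fq^n \to G/\m G \to 0$, whence $\dim_{\Fq}(M/\m M) = b_1 + n - b_0$.)

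Then I would count $\Sur_R(R^{n+u}, M)$ by the Nakayama reduction used in the proof of Lemma \ref{lem:fw_general_step2}: the reduction map $\Hom_R(R^{n+u}, M) = M^{n+u} \to (M/\m M)^{n+u} = \Hom_{\Fq}(\Fq^{n+u}, \Fq^{\beta})$ is surjective with every fiber of size $\abs{\m M}^{n+u} = (\abs{M}/q^{\beta})^{n+u}$, and by Nakayama's lemma $A : R^{n+u} \to M$ is surjective exactly when its reduction is. Using the standard count $\abs{\Sur_{\Fq}(\Fq^{n+u}, \Fq^{\beta})} = q^{(n+u)\beta}\prod_{i=(n+u)-\beta+1}^{n+u}(1-q^{-i})$ of full-rank matrices, and then substituting $\beta = n + b_1 - b_0$ and $\abs{M} = \abs{R^n}/\abs{G} = \abs{R}^n/\abs{G}$, I get $\abs{\Sur_R(R^{n+u}, M)} = \abs{M}^{n+u}\prod_{i=(n+u)-\beta+1}^{n+u}(1-q^{-i}) = \dfrac{\abs{R}^{n(n+u)}}{\abs{G}^{n+u}}\prod_{i=u+b_0-b_1+1}^{n+u}(1-q^{-i})$, which manifestly depends only on $\abs{G}$ and on $b_0, b_1$, giving the ``in particular'' clause. (As a sanity check, this index range $u+b_0-b_1+1$ consists of positive integers under the hypothesis $b_0 \geq b_1 - u$, so every factor is nonzero --- as it must be, since $\Sur_R(R^{n+u},M) \neq \emptyset$ whenever $n+u \geq \beta$ --- and it reproduces the formula of Theorem \ref{thm:fw_general} upon combining Steps 2 and 3.) I do not expect any genuine obstacle: all the content is the identity $\beta_0^R(M) = n + b_1 - b_0$, which converts the residue-field surjection count into the stated formula, and that identity is already in hand from Lemma \ref{lem:betti_bound}.
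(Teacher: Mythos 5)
Your proof is correct and follows the paper's proof essentially verbatim: compute $\beta_0^R(M)=n+b_1-b_0$ from the argument in Lemma~\ref{lem:betti_bound}, count $\Sur_R(R^{n+u},M)$ via the Nakayama reduction to full-rank matrices over $\Fq$ as in \eqref{eq:sur_prob}, and substitute $\abs{M}=\abs{R}^n/\abs{G}$. One point worth flagging: the lower index $u+b_0-b_1+1$ you obtain is what the argument actually yields and is what the statement \eqref{eq:fw_general} of Theorem~\ref{thm:fw_general} requires, whereas Lemma~\ref{lem:fw_general_step3} as printed starts the product at $u+b_0-b_1$ --- this is an off-by-one typo in the paper, which is already visible in the boundary case $b_0=b_1-u$ where the printed lower limit $0$ would introduce a spurious factor $1-q^0=0$ even though $\Sur_R(R^{n+u},M)$ is nonempty.
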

\begin{proof}
    From the proof of Lemma \ref{lem:betti_bound}, we have $\beta_0^R(M)=n+b_1-b_0$ by taking $A$ to be the quotient map $R^n \tra R^n/M \simeq_R G$ so that $M = \ker(A)$. By the same argument involving \eqref{eq:sur_prob}, we have
    \begin{equation*}
        \abs{\Sur(R^{n+u}, M)}=\abs{M}^{n+u}\prod_{i=n+u-\beta_0^R(M)}^{n+u} (1-q^{-i}).
    \end{equation*}

    The desired formula then follows because $|R|^{n(n+u)} = |R^n|^{n+u} = |G|^{n+u}|M|^{n+u}$.
\end{proof}

\hspace{3mm} We are now ready to show Theorem \ref{thm:fw_general}:

\begin{proof} [{Proof of Theorem \ref{thm:fw_general}}] By Lemma \ref{lem:fw_general_step1}, we may assume that $R$ is of finite size. It remains to prove \eqref{eq:fw_general} under the assumption $n\geq b_0\geq b_1-u$. By Lemma \ref{lem:fw_general_step2} and Lemma \ref{lem:fw_general_step3}, we have
    \begin{align*}
        \underset{X\in \M_{n\times (n+u)}(R)}\Prob(\cok(X) \simeq_R G) & = \frac{1}{\abs{R}^{n(n+u)}} \# \set{X\in \M_{n\times (n+u)}(R): \cok(X) \simeq_R G} \\
        &= \frac{1}{\abs{R}^{n(n+u)}} \parens*{\frac{\abs{G}^n}{\abs{\Aut_R (G)}} \prod_{j=n-b_0+1}^n (1-q^{-j}) } \parens*{\frac{\abs{R}^{n(n+u)}}{\abs{G}^{n+u}}\prod_{i=u+b_0-b_1}^{n+u} (1-q^{-i})} \\
        &= \frac{1}{\abs{\Aut_R (G)} \abs{G}^u} \prod_{i=u+b_0-b_1}^{n+u} (1-q^{-i}) \prod_{j=n-b_0+1}^n (1-q^{-j}),
    \end{align*}
    which is \eqref{eq:fw_general}.
\end{proof}

\subsection{Proof of Lemma \ref{lem:equidistribution_residue}}\label{sec:equidistribution_residue_proof} We now prove Lemma \ref{lem:equidistribution_residue}:

\begin{proof}[Proof of Lemma \ref{lem:equidistribution_residue}] Denote by $P(G|\bar X)$ the conditional probability in \eqref{eq:fw_fiberwise}. Suppose that $\bbar {X_1},\bbar {X_2}\in \M_n(R/\fa)$ satisfy $\cok(\bbar {X_1}) \simeq_R G/\fa G \simeq_R \cok(\bbar {X_2})$. We shall prove that $P(G|\bbar {X_1})=P(G|\bbar {X_2})$.
   
\hspace{3mm} By Lemma \ref{lem:row_col_equiv} (3) applied to the ring $R/\fa$, there exist $\bar g\in \GL_n(R/\fa)$ and $\bar g'\in \GL_m(R/\fa)$ such that $\bar g \bbar {X_1} \bar g'=\bbar {X_2}$. Pick any lifts $g\in \M_n(R)$ and $g'\in \M_m(R)$ of $\bar g$ and $\bar g'$, respectively. Since invertibility can be tested modulo $\m$, the matrices $g,g'$ must be invertible.

\hspace{3mm} Consider the map
    \begin{equation*}
        \begin{aligned}       
            \set{X_1\in \M_n(R):X_1\equiv \bbar {X_1}\pmod \fa} & \to \set{X_2\in \M_n(R):X_2\equiv \bbar{X_2}\pmod \fa} \text{ given by} \\
            X_1 &\mapsto gX_1g',
        \end{aligned}
    \end{equation*}
    which is well-defined since $\bar g \bbar {X_1} \bar g'=\bbar {X_2}$. This map is a measure-preserving bijection because it is a restriction of an $R$-linear automorphism of $\M_n(R)$ and the Haar measure on $\M_n(R)$ is unique. By its definition, this map preserves the cokernel up to $R$-linear isomorphism, so $P(M|\bbar {X_1})=P(M|\bbar {X_2})$.
\end{proof}

\hspace{3mm} Hence, to show Theorem \ref{main}, it remains to show Theorem \ref{equi}. In the next section, we shall reduce Theorem \ref{equi} into another lemma, which is proven in \S \ref{HaEnd}.

\section{Reduction of proof of Theorem \ref{equi}}

\hspace{3mm} The high-level idea of this section is originated from \cite{CK} and \cite{CLS}. Write $\bar{X} := A_n$ in Theorem \ref{equi} since our $n$ is fixed throughout this section. Write $R := \bZ_p[t]/(P(t))$ and $d := \deg(P)$. To prove Theorem \ref{equi}, it suffices to construct a measure-preserving bijection
\begin{align*}
\set{X\in \M_n(\Zp)_{\bar X}:& \cok_R(X+\bar t(pY_1-I_n)+\bar t^2pY_2+\dots+\bar t^{d-1}pY_{d-1}) \simeq_R G} \\
&\to \set{X'\in \M_n(\Zp)_{\bar X}: \cok_R(X'-\ol t I_n) \simeq_R G},
\end{align*}
given the hypotheses of Theorem \ref{equi}.

\hspace{3mm} To achieve this, we note that $\cok(ZU) \simeq_R \cok(Z)$ for any $Z\in \M_n(R)$ and $U\in \GL_n(R)$, so it suffices to construct a measure-preserving bijection $\Phi: \M_n(\Zp)_{\bar X} \to \M_n(\Zp)_{\bar X}$ such that whenever $X'=\Phi(X)$, there exists $U\in \GL_n(R)$ such that
\begin{equation}\label{eq:equidist_map_original}
    \parens*{X+\bbar t(pY_1-I_n)+\bbar t^2pY_2+\dots+\bbar t^{d-1}pY_{d-1}}U=X'-\ol t I_n.
\end{equation}

\hspace{3mm} When $d=2$, the first author and Kaplan \cite[p.645]{CK} observed that we can take $\Phi(X) = X(I_n - pY_1)^{-1}$ with $U=(I_n-pY_1)^{-1}$. Note that the inverse of $\Phi$ is given by $\Phi^{-1}(X') = X'(I_n - pY_1)$. 

\hspace{3mm} When $d\geq 3$, as observed by the first author, Liang, and Strand in \cite[Remark 3.8]{CLS}, a simple choice of $\Phi$ is no longer available. Nevertheless, we show that such $\Phi$ exists through an algorithmic approach. For clarity, we state our claim as a lemma, which slightly cleans up the hypotheses in Theorem \ref{equi} and \eqref{eq:equidist_map_original}.

\begin{lem}\label{lem:equidist_map}
    Let $P(t)\in \Zp[t]$ be monic of degree $d \geq 2$ and $pY_2, \dots, pY_{d-1}\in p\M_n(
    \Zp)$. Let $R=\Zp[t]/P(t)$. Then there exists a Haar measure-preserving bijection $\Phi: \M_n(\Zp) \to \M_n(\Zp)$ such that whenever $X'=\Phi(X)$, we have $X\equiv X'\ppmod p$ and
    \begin{equation}\label{eq:equidist_map_clean}
        \parens*{X+\bbar tI_n+\bbar t^2pY_2+\dots+\bbar t^{d-1}pY_{d-1}}U = X'+\ol t I_n
    \end{equation}
    for some $U\in \GL_n(R)$ potentially depending on $X$.
\end{lem}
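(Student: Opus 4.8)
I would construct $\Phi$ by an explicit iterative ``absorption'' of the perturbation $\ol t^2pY_2+\dots+\ol t^{d-1}pY_{d-1}$ into a unit. Given $X$, write $Z:=X+\ol tI_n+\ol t^2pY_2+\dots+\ol t^{d-1}pY_{d-1}$; the point is that $Z$ reduces modulo $p$ to the monic degree-one element $\ol X+\ol tI_n$ in the linearizing variable, so that \eqref{eq:equidist_map_clean} amounts to lifting the trivial factorization $\ol X+\ol tI_n=(\ol X+\ol tI_n)\cdot I_n$ over $\M_n(R/pR)$ to a factorization $Z=(X'+\ol tI_n)V$ over $\M_n(R)$ with $V$ a unit congruent to $I_n$ and with the monic factor unchanged modulo $p$; one then sets $\Phi(X):=X'$ and $U:=V^{-1}$. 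This is exactly the kind of statement governed by a noncommutative Weierstrass preparation / Hensel factorization theorem for the matrix ring $\M_n(\Zp)$, which I would invoke in the form of the theorems labeled \ref{thm:weierstrass} and \ref{thm:finweierstrass}; note that one must divide by the monic degree-one factor $\ol tI_n+X'$ rather than by $Z$ itself, whose leading coefficient $pY_{d-1}$ is not invertible modulo $p$. As a preliminary, Hensel's lemma and the Chinese remainder theorem reduce to the case $\bar P=\bar Q^m$, $R$ local, exactly as in the proofs of Proposition~\ref{prep'} and Theorem~\ref{Ha'}.

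The mechanism --- and the reason a single closed formula for $U$ is available only for $d=2$ (the case recalled from \cite{CK}) --- is an iteration that raises the $p$-adic valuation of the perturbation by one per pass: right-multiply $Z$ by a unit $I_n-\ol tpW_2-\dots-\ol t^{d-2}pW_{d-1}$ designed to cancel the top $\ol t$-degree perturbation terms; this disturbs the lower $\ol t$-coefficients, both through $X\cdot(\cdots)$ and through the wrap-around from $\ol t^{d}=-(a_{d-1}\ol t^{d-1}+\dots+a_0)$ where $P(t)=t^d+a_{d-1}t^{d-1}+\dots+a_0$; then restore the $\ol t$-coefficient to $I_n$ by right-multiplying by its inverse, a matrix $\equiv I_n\ppmod p$ and hence a unit of $\M_n(\Zp)$. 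After the pass, $Z$ is again of the form $X^{(1)}+\ol tI_n+\ol t^2p^2W_2^{(1)}+\dots$ with $X^{(1)}\equiv X\ppmod p$; iterating, the correcting units converge to $I_n$, their infinite product converges in $\GL_n(R)$, and $X^{(r)}\to X'$ with $ZU=X'+\ol tI_n$. For $d=3$ this is a short computation; the role of the Weierstrass theorems is to organize the simultaneous control of the coefficients of $\ol t^2,\dots,\ol t^{d-1}$ for general $d$, and also to run the same argument over the finite rings $(\bZ/p^k\bZ)[t]/(P(t))$, producing for each $k$ a self-map $\bar\Phi_k$ of $\M_n(\bZ/p^k\bZ)$ with $\Phi=\varprojlim_k\bar\Phi_k$.

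It remains to see that $\Phi$ is a Haar-measure-preserving bijection, and the key is the structural fact, to be read off from the construction, that $\Phi$ is ``unipotent-triangular'' in the $p$-adic digit filtration: the $j$-th $p$-adic digit of $\Phi(X)$ equals the $j$-th digit of $X$ plus a function of the digits of $X$ of index $<j$ (the pass-$r$ corrections are congruent to $I_n$ modulo $p^r$, so leave the lower digits untouched, and at the boundary they translate the next digit by a quantity determined by the digits already fixed). In particular $\Phi(X)\bmod p^k$ depends only on $X\bmod p^k$, so each $\bar\Phi_k$ is a well-defined bijection of $\M_n(\bZ/p^k\bZ)$ preserving the uniform measure, these are compatible in $k$, and hence $\Phi=\varprojlim_k\bar\Phi_k$ is a bijection of $\M_n(\Zp)=\varprojlim_k\M_n(\bZ/p^k\bZ)$ preserving the Haar measure, which is the inverse limit of the uniform measures. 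The congruence $X\equiv\Phi(X)\ppmod p$ and the existence of the required $U=\varprojlim_kU_k\in\GL_n(R)$ are built in.

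The main obstacle, I expect, is the noncommutative Weierstrass preparation theorem itself for $\M_n(\Zp)$ and $\M_n(\bZ/p^k\bZ)$: organizing the wrap-around and the cross-terms among the $\ol t^2,\dots,\ol t^{d-1}$ coefficients for general $d$ so that the factorization $Z=(X'+\ol tI_n)V$ is produced algorithmically and compatibly with reduction of coefficients, together with the bookkeeping needed to verify the unipotent-triangular property above --- which is what upgrades the raw factorization into a measure-preserving bijection despite $U$ depending on $X$.
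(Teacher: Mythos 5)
Your high-level strategy --- prove a noncommutative Weierstrass division theorem for the matrix ring and read off $\Phi$, its inverse, and measure-preservation from the resulting factorization --- is exactly the paper's, and you correctly identify Theorems \ref{thm:weierstrass} and \ref{thm:finweierstrass} as the tools. But the mechanism you describe has a concrete error and omits the structural idea that makes those theorems work. Your claimed invariant, that one pass of the iteration raises the $p$-adic valuation of the perturbation by one and produces something of the form $X^{(1)}+\ol tI_n+\ol t^2p^2W_2^{(1)}+\cdots$, is false once $d\geq 4$: right-multiplying $Z=X+\ol tI_n+\sum_{i=2}^{d-1}p\ol t^iY_i$ by $I_n-\sum_{j=2}^{d-1}p\ol t^{j-1}Y_j$ does cancel the original degree-$\geq 2$ terms, but it creates $-\sum_{j=2}^{d-1}p\ol t^{j-1}XY_j$, and the terms with $j\geq 3$ are new degree-$\geq 2$ perturbations divisible by $p$ but not by $p^2$; the subsequent correction of the $\ol t$-coefficient by a matrix $\equiv I_n\ppmod p$ does not improve this. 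So convergence is not a single-step $p$-adic gain. The paper's Lemma \ref{lem:division} instead introduces the mixed valuation $\delta(at^b)=v_k(a)-\eps b$ (with $0<\eps\leq 1/D$) combining the coefficient's $p$-adic valuation with a small multiple of the $t$-degree, proves the increment $\delta(s_{j+1})\geq\delta(s_j)+\eps$, and uses that nonzero elements of $A_k[t]$ have $\delta\leq k-1$ to force termination mod $p^k$. This careful choice of valuation is the content you are deferring to ``the Weierstrass theorems,'' and your naive bookkeeping cannot replace it.

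Second, and more structurally, you work in $\M_n(R)$ with $\ol t$ subject to $\ol t^d=-(a_{d-1}\ol t^{d-1}+\cdots+a_0)$, and you explicitly account for wrap-around. The paper never does this. Theorems \ref{thm:weierstrass} and \ref{thm:finweierstrass} are stated in the ring $\Athat$ of restricted power series over $A=\M_n(\bZ_p)$ in a free commuting variable $t$, with no polynomial relation imposed, and the factorization is pushed down to $\M_n(R)$ along the surjection $\Athat\tra\M_n(R)$ only at the very end. This is what makes the division algorithm tractable: the wrap-around you try to track simply does not occur, the ever-higher powers of $t$ created at each pass are genuinely new terms whose coefficients tend $p$-adically to $0$, and bijectivity follows immediately from the uniqueness clause of Theorem \ref{thm:weierstrass} with the roles of $M(t)$ and $N(t)$ swapped (rather than from the triangularity-in-$p$-adic-digits claim you make, which would need its own proof). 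It also means your proposed preliminary CRT reduction to $\bar P=\bar Q^m$ is unnecessary for this lemma: $P(t)$ plays no role until the final descent. The mod-$p^k$ compatibility giving measure-preservation does come from Theorem \ref{thm:finweierstrass}, as you correctly guess.
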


\begin{proof}[Proof that Lemma \ref{lem:equidist_map} implies Theorem \ref{equi}] We assume Lemma \ref{lem:equidist_map} and then establish \eqref{eq:equidist_map_original}. Given the hypotheses of Theorem \ref{equi}, we note
\begin{align*}
&X + \bar{t}(pY_1 - I_n) + \bar{t}^2pY_2 + \cdots + \bar{t}^{d-1}pY_{d-1} \\
&= (X(pY_1 - I_n)^{-1} + \bar{t}I_n + \bar{t}^2pY_2(pY_1 - I_n)^{-1} + \cdots + \bar{t}^{d-1}pY_{d-1}(pY_1 - I_n)^{-1})(pY_1 - I_n).
\end{align*}
Applying Lemma \ref{lem:equidist_map} by replacing $X$ with $X(pY_1 - I_n)^{-1}$ and $X'$ with $-X'$, which makes sense because $X(pY_1 - I_n) ^{-1}\equiv -X' \ppmod{p}$, we may find some $V \in \GL_n(R)$ such that
\[(X(pY_1 - I_n)^{-1} + \bar{t}I_n + \bar{t}^2pY_2(pY_1 - I_n)^{-1} + \cdots + \bar{t}^{d-1}pY_{d-1}(pY_1 - I_n)^{-1})V = - X' + \bar{t} I_n.\]
Then taking $U = -(pY_1 - I_n)^{-1}V$, we obtain \eqref{eq:equidist_map_original}.
\end{proof}

\hspace{3mm} Thus, to prove Theorem \ref{main}, it remains to prove Lemma \ref{lem:equidist_map}. Before we start the proof of Lemma \ref{lem:equidist_map}, we give the simplest nontrivial example to illustrate the idea and its apparent difficulties.

\begin{exmp}\label{exmp:division} Let $d=3$ and suppose we are given $f=X+\ol t I_n+\ol t^2 pY_2$, where $X\in \M_n(\Zp)$ and $pY_2\in p\M_n(\Zp)$. We say $g \in \M_n(R)$ is \textbf{equivalent} to $f$ if $g=fU$ for some $U\in \GL_n(R)$. We wish to find an element without $\ol t^2$ or higher terms that is equivalent to $f$. An obvious attempt is to keep updating $f$ by an equivalent element, each step getting rid of some higher terms of $f$, and see if this process eventually terminates. For example, an initial candidate could be
\[f(I_n-\ol tpY_2)=X+\ol t (I_n-XpY_2)-\ol t^3 p^2Y_2^2.\]
Correcting the linear coefficient, we get 
\[f(I_n - \ol t pY_2)(I_n-XpY_2)^{-1}=X(I_n-XpY_2)^{-1} + \ol t I_n - \ol t^3 p^2Y_2^2 (I_n-XpY_2)^{-1}.\] We are making progress since the coefficient of $\bar{t}^3$ is a multiple of $p^2$, so the higher terms are more divisible by $p$ than before. However, if we repeat this process again, we get
    \begin{equation*}
    \begin{multlined}
        f(I_n - \ol t pY_2)(I_n-XpY_2)^{-1}(I_n + \ol t^2 p^2Y_2^2 (I_n - XpY_2)^{-1})\\ = X(I_n-XpY_2)^{-1} + \ol t I_n + \ol t^2 X(I_n-XpY_2)^{-1} p^2Y_2^2 (I_n-XpY_2)^{-1} - \ol t^5 p^2Y_2^2 (I_n-XpY_2)^{-1} p^2Y_2^2 (I_n-XpY_2)^{-1}.
    \end{multlined}
    \end{equation*}
Here, the higher terms (i.e., $\bar{t}^2$ or higher) are still only known to be divisible by $p^2$. The reader is encouraged to repeat the process again, and find that the higher terms are divisible by $p^3$ after the process. 
\end{exmp}

\hspace{3mm} In fact, the process in Example \ref{exmp:division} turns out to ``converge,'' although it is unclear how to prove it. When $d > 3$, the situation is even more convoluted. Our goal is to is systematically describe an algorithm to establish such a convergence. Furthermore, the construction of $\Phi(X)$ is extremely complicated, which makes it almost impossible to directly show that $\Phi$ is a bijection. In the next section, we deal with this complication by mimicking a common technique in commutative algebra, called the \emph{Weierstrass preparation theorem}, for our noncommutative ring $\M_n(\bZ_p)$.

\section{A noncommutative Weierstrass preparation theorem and proof of Lemma \ref{lem:equidist_map}} \label{HaEnd}

\subsection{A noncommutative Weierstrass preparation theorem} In commutative algebra, the \textbf{Weierstrass preparation theorem} states that given a complete local ring $(A, \m)$, if $f(t) = a_0 + a_1 t + a_2 t^2 + \cdots \in A\llb t \rrb$ with not all $a_i$ are in $\m$, then there is a unique unit $u(t) \in A\llb t \rrb$ and a polynomial $F(t) = t^s + b_{s-1}t^{s-1} + \cdots + b_1 t + b_0 \in A[t]$ with $b_i \in \m$ such that $f(t) = u(t)F(t)$.

\hspace{3mm} For our purpose, our ring is $A := \M_n(\bZ_p)$ which is a non-commutative ring for any $n \geq 2$. We are fixing our $n \in \bZ_{\geq 1}$ in this section.
\begin{setting}\label{setting:weierstrass} We note that $A = \M_n(\bZ_p)$ satisfies the following properties:
    \begin{enumerate}
    \item $\bigcap_{N=1}^\infty p^N A = 0$.
    \item If $(a_n)_{n \in \bZ_{\geq 0}}$ is a sequence in $A$ such that for any $N \in \bZ_{\geq 0}$, the sequence $(a_n \bmod {p^N})$ eventually stabilizes, then the sequence $(a_n)_{n \in \bZ_{\geq 0}}$ converges in $A$. (That is, there exists $a \in A$ such that for any $N \in \bZ_{\geq 0}$, there exists $m \in \bZ_{\geq 0}$ such that $a_n \equiv a  \ppmod {p^N}$ whenever $n \geq m$.)
    \end{enumerate}
\end{setting}    

\hspace{3mm} Our theorem will take place in the ring $\Athat$ defined below.

\begin{defn}
    Let $A[t]$ and $A\llb t \rrb$ be the polynomial ring and the power series ring over $A$ generated by a variable $t$ that commutes with $A$. Define $\Athat$ to be the subring of $A\llb t \rrb$ given by
    \begin{equation}
        \Athat := \set*{\sum_{l=0}^{\infty}C_l t^l : C_l \in A\text{ and $\lim_{l \to \infty} C_l=0$}}.
    \end{equation}
For $A[t]$ and $A\llb t \rrb$ we use the product topology induced from $A$. Then $\Athat \sub A\llb t \rrb$ gets the subspace topology.
\end{defn}

\begin{lem}\label{com} With respect to the $p$-adic topology, the ring $\Athat$ is complete. 
\end{lem}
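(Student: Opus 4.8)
The plan is to verify completeness directly on Cauchy sequences, after first identifying the topology. By the \emph{$p$-adic topology} on $\Athat$ I mean the linear topology with fundamental system of neighborhoods of $0$ given by the ideals $p^N\Athat$, $N \geq 0$. The key preliminary observation is the description
\[ p^N\Athat = \Bigl\{ \textstyle\sum_{l} C_l t^l \in \Athat : C_l \in p^N A \text{ for all } l \Bigr\}, \]
which holds because multiplication by $p^N$ is a homeomorphism of $A$ onto the (closed) subset $p^N A$, so it carries null sequences to null sequences and back; equivalently, modulo $p^N$ a coefficient sequence tending to $0$ becomes eventually $0$, so that $\Athat/p^N\Athat \cong \M_n(\bZ/p^N\bZ)[t]$ is an honest polynomial ring over the finite ring $A/p^N A$. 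By Property \ref{setting:weierstrass}(1) we have $\bigcap_N p^N A = 0$, hence $\bigcap_N p^N\Athat = 0$, so the topology is Hausdorff and metrizable and ``complete'' may be tested on sequences.

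Next I would take a $p$-adic Cauchy sequence $f^{(k)} = \sum_l C_l^{(k)} t^l$ in $\Athat$. The displayed description of $p^N\Athat$ shows that for each fixed index $l$ the sequence $(C_l^{(k)})_k$ is Cauchy in $A$, so by Property \ref{setting:weierstrass}(2) (completeness of $A$) it converges to some $C_l \in A$. The natural candidate for the limit is $f := \sum_l C_l t^l$, and it remains to check two things: that $f$ actually lies in $\Athat$ (not merely in $A\llb t\rrb$), and that $f^{(k)} \to f$ in the $p$-adic topology.

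The first point, $C_l \to 0$ as $l \to \infty$, is the only step with any real content, and it is where one must use the Cauchy hypothesis \emph{uniformly in $l$}. Given $M$, choose $K$ with $f^{(k)} - f^{(k')} \in p^M\Athat$ for all $k, k' \geq K$; letting $k' \to \infty$ with $k = K$ fixed and using that $p^M A$ is closed in $A$ gives $C_l^{(K)} - C_l \in p^M A$ for \emph{all} $l$ at once. Since $f^{(K)} \in \Athat$, its coefficients $C_l^{(K)}$ already lie in $p^M A$ for $l$ large; combining, $C_l \in p^M A$ for $l$ large, so $C_l \to 0$ and $f \in \Athat$. For the second point, the same ``fix $k \geq K$, let $k' \to \infty$'' argument shows that for $k \geq K$ every coefficient of $f^{(k)} - f$ lies in $p^N A$; since $f^{(k)} - f$ is already in $\Athat$, the description of $p^N\Athat$ yields $f^{(k)} - f \in p^N\Athat$, so $f^{(k)} \to f$.

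I expect no serious obstacle here: the single slightly delicate point is the interchange showing the limiting coefficients still tend to $0$, which works precisely because the estimate $f^{(k)} - f^{(k')} \in p^M\Athat$ controls all coefficient indices simultaneously rather than one at a time; everything else is routine bookkeeping with Properties \ref{setting:weierstrass}(1) and (2).
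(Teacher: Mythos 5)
Your proof is correct and follows essentially the same route as the paper's: identify each coefficient limit $C_l$ using completeness of $A$, then use the Cauchy estimate (which is uniform in $l$) together with the fact that a fixed $f^{(K)}\in\Athat$ has coefficients eventually in $p^M A$ to conclude $C_l\to 0$ and $f^{(k)}\to f$. Your write-up is in fact a bit cleaner at the delicate step — you make the uniformity in $l$ and the closedness of $p^M A$ explicit — whereas the paper's exposition of that same point is slightly garbled (it writes $\lim_{j\to\infty}C_{jl}=0$ where it means $\lim_{l\to\infty}C_{jl}=0$ for fixed large $j$), but the underlying argument is the same.
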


\begin{proof} Let $(F_j(t))_{j \in \bZ_{\geq 0}}$ be a Cauchy sequence in $\Athat$. Write
\[F_j(t) = C_{j0} + C_{j1}t + C_{j2}t^2 + \cdots.\]
Since $(F_j(t))_{j \in \bZ_{\geq 0}}$ is Cauchy in $\Athat$, for every $l \in \bZ_{\geq 0}$, the sequence $(C_{jl})_{j \in \bZ_{\geq 0}}$ is Cauchy in $A = \M_n(\bZ_p)$, which is complete with respect to its $p$-adic topology. Thus, we may consider $C_l := \lim_{j \ra \infty}C_{jl}$ in $A$ for each $l \in \bZ_{\geq 0}$ and $F(t) := C_0 + C_1t + C_2t^2 + \cdots A\llb t \rrb$. Since $\lim_{j \ra \infty}C_{jl} = 0$ in $A$, given any $k \in \bZ_{\geq 0}$, there exists some $m_k \in \bZ_{\geq 0}$ such that if $j > m_k$, then $C_{jl} \in p^{k}A$. As $C_l = \lim_{j \ra \infty}C_{jl}$, there exists some $n_k \in \bZ_{\geq 0}$ such that if $l > n_k$, then $C_l - C_{jl} \in p^kA$ so that $C_l \in p^kA$. This implies that $\lim_{l \ra \infty}C_l = 0$ so that $F(t) \in \Athat$. By definition of product topology on $A \llb t \rrb$, it follows that $\lim_{j \ra \infty}F_j(t) = F(t)$ in $A \llb t \rrb$. Hence, the last convergence also happens in $\Athat$. This finishes the proof.
\end{proof}

\begin{exmp}
    We have $(I_n - pI_n t)^{-1}=I_n +  pI_n t + p^2 I_n t^2 + \dots$ is an element of $\Athat$, while $(I_n - I_n t)^{-1}=I_n + I_n t + I_n t^2+\dots$ is not. 
\end{exmp}

\hspace{3mm} We are ready to state a main theorem of this section.

\begin{thm}[Noncommutative Weierstrass preparation theorem]\label{thm:weierstrass} Fix any $M(t), N(t)\in \Athat$. For any $X\in A$, there exists unique $U(t)\in \Athat$ and unique $X'\in A$ such that
    \begin{equation}\label{eq:preparation}
        (X + I_n t + p I_n t^2 M(t))\, U(t) = X' + I_n t + pI_n t^2 N(t).
    \end{equation}

    Moreover, we have $U(t)\in I_n + p \Athat$ and $X'\equiv X\ppmod p$.
\end{thm}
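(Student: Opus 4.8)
The plan is to solve \eqref{eq:preparation} by successive approximation modulo powers of $p$, exploiting that reduction modulo $p$ collapses the power-series identity into an ordinary polynomial division problem in one variable. Write $f(t) := X + I_n t + pI_n t^2 M(t)$ and $g(t) := I_n t + pI_n t^2 N(t)$, both in $\Athat$, and observe that $\bar f(t) = \bar X + I_n t \in \M_n(\bF_p)[t]$ is a polynomial of degree \emph{exactly} one whose leading coefficient $I_n$ is a unit in $\M_n(\bF_p)$. This is the only structural fact I will use: it plays the role that the ``distinguished polynomial versus unit'' dichotomy plays in the classical Weierstrass preparation theorem.

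First I would dispatch uniqueness, over all of $\Athat$ and $A$ (which, combined with existence in the normalized form below, yields the ``moreover'' clause for free). If $f U_1 = X'_1 + g$ and $f U_2 = X'_2 + g$, then $f\,\Delta U = \Delta X'$ where $\Delta U := U_1 - U_2 \in \Athat$ and $\Delta X' := X'_1 - X'_2 \in A$. If $\Delta U \notin p\Athat$, then $\overline{\Delta U} \in \M_n(\bF_p)[t]$ is a nonzero polynomial, and since $\bar f$ has degree one with invertible leading coefficient, $\bar f\,\overline{\Delta U}$ has degree $\deg\overline{\Delta U} + 1 \geq 1$; but $\bar f\,\overline{\Delta U} = \overline{\Delta X'}$ is a constant, a contradiction. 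Hence $\Delta U \in p\Athat$; writing $\Delta U = p\,\Delta U'$ and cancelling $p$ (valid since $A$, hence $\Athat$, is $p$-torsion-free) reduces to the same situation with $\Delta X' \in pA$ and $\Delta U'$ in place of $\Delta U$. Iterating and using Properties \ref{setting:weierstrass}(1), so that $\bigcap_N p^N \Athat = 0$, gives $\Delta U = 0$, and then $\Delta X' = f\cdot 0 = 0$.

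For existence I would build $U_k \in I_n + p\Athat$ and $X'_k \in X + pA$ with $f U_k \equiv X'_k + g \pmod{p^{k+1}}$ (coefficientwise) by induction on $k$. For $k = 0$ take $U_0 = I_n$ and $X'_0 = X$; this works modulo $p$ because $\bar f \cdot I_n = \bar X + I_n t = \overline{X'_0} + \bar g$. Given $U_k, X'_k$, the difference $f U_k - X'_k - g$ has all coefficients in $p^{k+1}A$, so it equals $p^{k+1}E(t)$ for a unique $E \in \Athat$ (the quotient stays in $\Athat$ because $\Athat$ is $p$-torsion-free and its coefficient sequences tend to $0$). Seeking $U_{k+1} = U_k + p^{k+1}\delta U$ and $X'_{k+1} = X'_k + p^{k+1}\delta X'$, the congruence modulo $p^{k+2}$ becomes the identity $\bar f(t)\,\overline{\delta U}(t) = \overline{\delta X'} - \bar E(t)$ in $\M_n(\bF_p)[t]$. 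I would solve it by left-dividing $-\bar E(t)$ by $\bar f(t)$: since the leading coefficient $I_n$ of $\bar f$ is a unit, the usual division algorithm produces $-\bar E(t) = \bar f(t)Q(t) + r$ with constant remainder $r \in \M_n(\bF_p)$, and one sets $\overline{\delta U} = Q$ and $\overline{\delta X'} = -r$; lifting $Q$ to a polynomial in $\Athat$ and $-r$ to an element of $A$ completes the inductive step. The sequences $(U_k)$ and $(X'_k)$ are $p$-adically Cauchy (consecutive terms differ by elements of $p^{k+1}\Athat$, resp. $p^{k+1}A$), so they converge: $(U_k)$ to some $U \in I_n + p\Athat$ by Lemma \ref{com}, and $(X'_k)$ to some $X' \in X + pA$ by completeness of $A = \M_n(\bZ_p)$. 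Passing to the limit in $f U_k \equiv X'_k + g \pmod{p^{k+1}}$, using continuity of multiplication on $\Athat$ for the $p$-adic topology, yields \eqref{eq:preparation}, and the normalizations $U \in I_n + p\Athat$ and $X' \equiv X \pmod p$ are built into the construction.

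The crux on which everything rests — and the step I expect to demand the most care — is the observation that reducing \eqref{eq:preparation} modulo $p$ turns $f$ into the degree-one polynomial $I_n t + \bar X$ with invertible leading coefficient; this single fact simultaneously supplies solvability of each correction (polynomial long division) and uniqueness (a nonzero quotient would raise the degree). The remaining effort is bookkeeping for the $p$-adic approximation, whose one non-formal ingredient is that $\Athat$ is $p$-adically complete, i.e. Lemma \ref{com}, which is precisely what keeps the limits of the $U_k$ inside $\Athat$ rather than escaping into $A\llb t\rrb$.
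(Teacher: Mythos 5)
Your uniqueness argument is essentially the same as the paper's, just organized by cancelling powers of $p$ rather than by induction on congruences mod $p^{k}$; both hinge on the observation that the product with $X + I_n t$ (whose leading coefficient $I_n$ is a non-zero-divisor) cannot collapse a nonzero polynomial mod $p$ to a constant.

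The existence argument, however, is a genuinely different route. The paper proves a division algorithm (Lemma~\ref{lem:division}) directly at the level of $\Athat$: it runs a recursion $q_{j+1}=q_j+s_j/t$, $r_{j+1}=r_j-g\,s_j/t$, and establishes convergence by introducing, for each fixed $k$, a valuation $\delta(at^b)=v_k(a)-\eps b$ that mixes $p$-adic order and $t$-degree, and showing $\delta(s_j)$ increases by at least $\eps$ per step. Your proof instead does Hensel-style successive approximation: lift the desired identity from mod $p^{k+1}$ to mod $p^{k+2}$, where the correction equation becomes a single polynomial division $-\bar E(t)=\bar f(t)Q(t)+r$ in $\M_n(\bF_p)[t]$ by the degree-one monic $\bar f=\bar X+I_n t$, and then take the $p$-adic limit. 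What your approach buys is that the convergence is trivial — consecutive approximants differ in $p^{k+1}\Athat$ — so the $\delta$-valuation bookkeeping disappears entirely, replaced by ordinary Euclidean division over a field in one variable; the only nontrivial inputs are $p$-adic completeness of $\Athat$ (Lemma~\ref{com}) and the fact that reduction mod $p$ sends $\Athat$ into $\M_n(\bF_p)[t]$ (Lemma~\ref{lem:terminate}). The paper's approach buys a self-contained statement (Lemma~\ref{lem:division}) of independent interest and a more uniform treatment that does not separate the ``mod $p$'' and ``lifting'' steps. One small caution for a written version of your proof: when dividing $fU_k - X'_k - g$ by $p^{k+1}$ to obtain $E(t)\in\Athat$, you should note explicitly (as you hint) that the quotient's coefficients still tend to zero because dividing by a fixed power of $p$ only shifts valuations by a constant; this is where $p$-torsion-freeness and the defining condition of $\Athat$ both enter. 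With that spelled out, the argument is complete and correct.
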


\begin{rmk} Theorem \ref{thm:weierstrass} can be generalized to a more general class of noncommutative rings, but we do not choose to do this in this paper for clarity. We also remark that any element in $I_n + p \Athat$ has a multiplicative inverse in $\Athat$, which can be seen by applying Lemma \ref{com}.
\end{rmk}

\hspace{3mm} We shall also need the version of the above theorem with $A_k := \M_n(\bZ/p^k\bZ)$ for arbitrary $k \in \bZ_{\geq 1}$ instead of $A$. We similarly define
\[\hhat{A_k[t]} : = \set*{\sum_{l=0}^{\infty}C_l t^l: C_l \in A_k \text{ and $\lim_{l \to \infty} C_l = 0$}},\]
but we are using the discrete topology on $A_k$, so having $\lim_{l \ra \infty} C_{l} = 0$ means that $C_l = 0$ for large enough $l$. This implies that $\hhat{A_k[t]} = A_k[t]$.

\begin{thm}[Finite noncommutative Weierstrass preparation theorem]\label{thm:finweierstrass} Fix any $M(t), N(t)\in A_k[t]$ for given $k \in \bZ_{\geq 1}$. For any $X \in A_k$, there exists unique $U(t) \in A_k[t]$ and unique $X' \in A_k$ such that
    \begin{equation}\label{eq:preparation}
        (X + I_n t + p I_n t^2 M(t))\, U(t) = X' + I_n t + pI_n t^2 N(t).
    \end{equation}

    Moreover, we have $U(t)\in I_n + p A_k[t]$ and $X'\equiv X\ppmod p$.
\end{thm}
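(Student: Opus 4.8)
The plan is to prove Theorem~\ref{thm:finweierstrass} directly, by a successive‑approximation scheme that must terminate because $p^k=0$ in $A_k$; the same scheme gives both existence and uniqueness, so I would not route through the infinite version Theorem~\ref{thm:weierstrass} (although existence alone could be obtained from it by reducing mod $p^k$). Write $f(t):=X+I_nt+pI_nt^2M(t)\in A_k[t]$, and for a candidate $U(t)\in A_k[t]$ and $X'\in A_k$ define the residual $r(t):=f(t)U(t)-(X'+I_nt+pI_nt^2N(t))$; the goal is to produce $U,X'$ with $r(t)=0$. The first thing to notice is that modulo $p$ we have $f(t)\equiv I_nt+\bar X$, a \emph{monic} degree‑one polynomial over $\M_n(\bF_p)$, and this monicity is the engine of the whole argument.

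For existence I would construct, by induction on $j$, matrices $U_j(t)\in I_n+pA_k[t]$ and $X_j'\in A_k$ with $X_j'\equiv X\pmod p$ whose residual $r_j(t)$ lies in $p^jA_k[t]$. Start with $U_1=I_n$, $X_1'=X$, so $r_1(t)=pI_nt^2(M(t)-N(t))\in pA_k[t]$. Given $r_j(t)=p^js_j(t)$, correct by $U_{j+1}=U_j+p^jV_j$ and $X_{j+1}'=X_j'+p^jW_j$, which forces
\[ r_{j+1}(t)=r_j(t)+p^j\bigl(f(t)V_j(t)-W_j\bigr)=p^j\bigl(s_j(t)+f(t)V_j(t)-W_j\bigr). \]
Reducing the bracket mod $p$, it suffices to solve $(I_nt+\bar X)\bar V_j(t)=\bar W_j-\bar s_j(t)$ in $\M_n(\bF_p)[t]$. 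Since $I_nt+\bar X$ is monic, polynomial division (valid over any ring because the leading coefficient is the unit $I_n$) yields unique $\bar q_j(t),\bar r_j$ with $\bar s_j(t)=(I_nt+\bar X)\bar q_j(t)+\bar r_j$ and $\bar r_j$ a constant; taking $\bar V_j=-\bar q_j$ and $\bar W_j=\bar r_j$ and lifting arbitrarily to $A_k[t]$ and $A_k$ makes the bracket vanish mod $p$, so $r_{j+1}\in p^{j+1}A_k[t]$. Because $p^kA_k[t]=0$, after at most $k-1$ steps the residual is exactly $0$, and $U:=U_k\in I_n+pA_k[t]$, $X':=X_k'\equiv X\pmod p$ satisfy \eqref{eq:preparation}.

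For uniqueness — which I would prove over all of $A_k[t]\times A_k$, so that the ``Moreover'' statement is free — suppose two pairs $(U_1,X_1')$, $(U_2,X_2')$ satisfy \eqref{eq:preparation}. Subtracting gives $f(t)D(t)=c$ with $D:=U_1-U_2$ and $c:=X_1'-X_2'\in A_k$. The key lemma is that if $h(t)\in\M_n(\bF_p)[t]$ and $(I_nt+\bar X)h(t)$ is a constant polynomial, then $h=0$, since $\deg\bigl((I_nt+\bar X)h\bigr)=1+\deg h$ for $h\ne 0$ (the leading coefficient of $I_nt+\bar X$ is the unit $I_n$). Reducing $f(t)D(t)=c$ mod $p$ gives $(I_nt+\bar X)\bar D(t)=\bar c$, so $\bar D=0$, i.e. $D\in pA_k[t]$. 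Now descend: if $D\in p^jA_k[t]$ with $1\le j\le k-1$, write $D=p^j\tilde D$; then $p^j[f\tilde D]_i=0$ for every $i\ge 1$ (the $t^i$‑coefficient of $c$ vanishes), hence $[f\tilde D]_i\in p^{k-j}A_k\subseteq pA_k$, so $(I_nt+\bar X)\overline{\tilde D}(t)$ is constant, forcing $\overline{\tilde D}=0$ and $D\in p^{j+1}A_k[t]$. Iterating, $D\in p^kA_k[t]=0$, hence $c=f\cdot 0=0$.

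The main obstacle is the per‑step correction in the existence argument: one must recognize that, modulo $p$, the left factor is the monic polynomial $I_nt+\bar X$, so that dividing by it leaves only a \emph{constant} remainder, which can then be absorbed into the unknown constant $X'$ rather than appearing as a spurious higher‑degree term — this is exactly the noncommutative form of the Weierstrass mechanism, and it is what guarantees the residual gains a factor of $p$ each time. Everything else (termination because $p^k=0$, the lift choices, and the degree bookkeeping in the uniqueness descent) is routine, and the noncommutativity of $A_k$ causes no difficulty because all the polynomials we divide by have leading coefficient $I_n$.
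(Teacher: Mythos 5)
Your proof is correct, and the existence argument takes a genuinely different route from the paper's. The paper constructs $U(t)$ and $X'$ by a division algorithm that works in the $p$-adic completion $\Athat$: starting from $q_1=0$, $r_1 = f$, it repeatedly strips the nonconstant part $s_j$ of the remainder, updates $q_{j+1}=q_j + s_j/t$ and $r_{j+1}=r_j - g\cdot s_j/t$, and proves convergence via a mixed valuation $\delta(at^b)=v(a)-\eps b$; the finite Theorem~\ref{thm:finweierstrass} is then obtained by reducing the whole construction modulo $p^k$. You instead work directly in $A_k[t]$ by successive $p$-adic approximation in the Hensel-lifting pattern: given a residual in $p^jA_k[t]$, you correct $(U_j,X_j')$ by $p^j(V_j,W_j)$, where $(\bar V_j,\bar W_j)$ is found by monic polynomial division by $I_nt+\bar X$ over $\M_n(\bF_p)[t]$ (the remainder, necessarily constant, is absorbed into $X'$), and the process terminates after at most $k-1$ steps because $p^kA_k=0$. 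Your route is arguably more transparent for the finite statement and avoids the $\delta$-valuation bookkeeping entirely, but as you note it does not by itself give the infinite version Theorem~\ref{thm:weierstrass}, whereas the paper's single algorithm handles both at once. The uniqueness arguments are essentially the same in both: exploit that modulo $p$ the left factor becomes the monic degree-one polynomial $I_nt+\bar X$ with unit leading coefficient, so that a nonzero product cannot be a constant; you phrase this by factoring $D=p^j\tilde D$ and passing to $\M_n(\bF_p)[t]$, while the paper phrases it as reduction modulo $p^{k+1}$ inside $A_{k+1}[t]$, but the mechanism is identical.
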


\subsection{Proof that Theorems \ref{thm:weierstrass} and \ref{thm:finweierstrass} imply Lemma \ref{lem:equidist_map}}

Here we prove Lemma \ref{lem:equidist_map} assuming Theorems \ref{thm:weierstrass} and \ref{thm:finweierstrass}. Recall that, after this, the proof of Theorem \ref{main} would be complete once we prove Theorems \ref{thm:weierstrass} and \ref{thm:finweierstrass}.

\begin{proof}[Proof of Theorems \ref{thm:weierstrass} and \ref{thm:finweierstrass} imply Lemma \ref{lem:equidist_map}] Recall $R := \bZ_p[t]/(P(t))$. We first note that we can identify $A[t] = \M_n(\bZ_p[t])$. Consider the modulo-$(P(t))$ surjective map 
\[A[t] = \M_n(\bZ_p[t]) \tra \M_n(R).\] 
Explicitly, the map is given by
\[C_0 + C_1 t + C_2 t^2 + \cdots + C_m t^m \mapsto C_0 + C_1 \bar{t} + C_2 \bar{t}^2 + \cdots + C_m \bar{t}^m,\]
where $\bar{t}$ is the image of $t$ under the projection $\bZ_p[t] \tra \bZ_p[t]/(P(t))$. Now, consider any
\[F(t) = C_0 + C_1 t + C_2 t^2 + \cdots \in \Athat\]
Using the fact that $\lim_{l \ra\infty 0}C_l = 0$ in $A$ with the $p$-adic topology, given any $k \in \bZ_{\geq 1}$, there exists minimal $m_{F,k} \in \bZ_{\geq 1}$ such that if $l > m_{F,k}$, then $C_l \in p^{k}A$. This lets us define a map $\Athat \ra \M_n((\bZ/p^k\bZ)[t])$ given by
\[F(t) = \sum_{l=0}^{\infty} C_lt^l \mapsto \sum_{l=0}^{m_{F,k}}\bbar C_lt^l,\]
where $\bbar C_l$ is $C_l$ modulo $p^k$. Hence, we get a map $\Athat \ra \M_n((\bZ/p^k\bZ)[t]/(P(t)))$ given by
\[F(t) = \sum_{j=0}^{\infty}C_l t^l \mapsto \sum_{l=0}^{m_{F,k}}\bbar C_l \bar{t}^l.\]
Since $p^kA \sups p^{k+1}A \sups p^{k+2}A \sups \cdots$, we have $m_{F,k} \leq m_{F,k+1} \leq m_{F,k+2} \leq \cdots$. By taking $k=1$, we have $m_{F,1} \leq m_{F,2} \leq m_{F,3} \leq \cdots$, so this induces a map $\Athat \ra \M_n(\bZ_p[t]/(P(t))) = \M_n(R)$ compatible with the projection maps $\M_n((\bZ/p^{k+1}\bZ)[t]/(P(t))) \tra \M_n((\bZ/p^k\bZ)[t]/(P(t)))$ for all $k \geq 1$. We have $\sum_{j=0}^{\infty}C_j\bar{t}^j \in \M_n(R)$ as the image of $\sum_{j=0}^{\infty}C_jt^j \in \Athat$. This map is surjective because the map $A[t] \ra \M_n(R)$ we described above is surjective.
    
\hspace{3mm} Let $M(t)\in \Athat$ be any lift of $Y_2+\ol t Y_3 + \dots + \ol t^{d-3} Y_{d-1}\in \M_n(R)$ and fix $M(t)$ from now on. Then for any $X\in \M_n(\Zp)$, by Theorem \ref{thm:weierstrass} with $N(t)=0$, there exists a unique $U(t)\in I_n + p\Athat$ and $X'\in \M_n(\Zp)$ such that
\begin{equation}\label{eq:preparation_lifted}
    (X + I_nt + p t^2 M(t))\, U(t) = X'+ I_nt \in \Athat.
\end{equation}
Define the map $\Phi:\M_n(\Zp)\to \M_n(\Zp)$ by $\Phi(X):=X' = (X + I_nt + p t^2 M(t))\, U(t) - I_n t$. Theorem \ref{thm:weierstrass} implies $X\equiv X'\ppmod p$. We claim $\Phi$ is the desired bijection.

\hspace{3mm} First, we show $\Phi$ is a bijection by constructing an inverse. By switching the role of $M(t)$ and $N(t)$ in Theorem \ref{thm:weierstrass}, for any $X'\in \M_n(\Zp)$, there exists a unique $V(t)\in I_n + p\Athat$ and $X''\in \M_n(\Zp)$ such that
\begin{equation*}
    (X' + I_nt)\, V(t) = X''+ I_nt+ p t^2 M(t)\in \Athat.
\end{equation*}
Define the map $\Psi:\M_n(\Zp)\to \M_n(\Zp)$ by $\Psi(X'):=X'' = (X' + I_nt)\, V(t) - I_nt - p t^2 M(t)$. By the uniqueness statement in Theorem \ref{thm:weierstrass}, it follows that $\Psi$ is the inverse of $\Phi$. 

\hspace{3mm} Next, we note that \eqref{eq:preparation} holds for some $U\in \GL_n(R)$ instead of $U(t)$. This is immediate by letting $U$ be the image of $U(t)$ under $\Athat \tra \M_n(R)$, and applying this surjection to \eqref{eq:preparation_lifted}.

\hspace{3mm} Finally, we prove that $\Phi$ is Haar measure-preserving. It suffices to prove that for $k\geq 1$, the bijection $\Phi$ is compatible with the mod-$p^k$ reduction map. More precisely, we claim that if $X_1,X_2\in \M_n(\Zp)$ satisfy $X_1\equiv X_2\ppmod {p^k}$, then $\Phi(X_1)\equiv \Phi(X_2)\ppmod {p^k}$. To prove the claim, write $X_i'= \Phi(X_i)$ and $\bar X = (X_1\bmod {p^k})=(X_2\bmod {p^k})$. Theorem \ref{thm:finweierstrass} with $N(t) = 0$ implies that there exist unique $\bar X'\in A_k$ and $U(t)\in I_n + p A_k[t]$ such that
\begin{equation*}
    (\bar X + I_n t + \pi t^2 M(t))\, U(t)\equiv \bar X' + I_n t \ppmod {p^k}. 
\end{equation*}
If we replace $\bar X'$ in the above identity with $(X_1'\bmod {p^k})$ and $(X_2'\bmod {p^k})$, the new identity still holds by Theorem \ref{thm:weierstrass}. Hence, it follows from the uniqueness Theorem \ref{thm:finweierstrass} that $X_1'\equiv X_2'\ppmod {p^k}$.
\end{proof}

\hspace{3mm} For the rest of the section, we prove Theorems \ref{thm:weierstrass} and \ref{thm:finweierstrass}, which would finish the proof of Theorem \ref{main}. We start with some elementary observations.

\subsection{Elementary observations}

The following observation is simple but crucial in the proofs of Theorems \ref{thm:weierstrass} and \ref{thm:finweierstrass}. Recall the notation $A = \M_n(\bZ_p)$ and $A_k = \M_n(\bZ/p^k\bZ)$.

\begin{lem}\label{lem:terminate}
For any $k \in \bZ_{\geq 1}$, we can identify
\begin{equation*}
\frac{\Athat}{p^k \Athat} = (A/p^k A)[t] = A_k[t].
\end{equation*}
In other words, every element in $\Athat$ is a polynomial modulo $p^k$.
\end{lem}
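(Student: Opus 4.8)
The plan is to exhibit $\Athat$ as $A[t] + p^k\Athat$ and then invoke the second isomorphism theorem. Throughout I use the elementary fact that $p$ (that is, $pI_n$) is a central non-zero-divisor in $A = \M_n(\bZ_p)$: it is central since $pI_n$ commutes with everything, and it is a non-zero-divisor since $\bZ_p$ is an integral domain and multiplication by $p$ acts entrywise. In particular $p^k\Athat$ is a two-sided ideal of $\Athat$, so the quotient ring $\Athat/p^k\Athat$ makes sense, and likewise $p^kA[t]$ is a two-sided ideal of $A[t]$ with $A[t]/p^kA[t] = (A/p^kA)[t] = A_k[t]$.

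First I would prove $\Athat = A[t] + p^k\Athat$, which is precisely the informal restatement that every element of $\Athat$ is a polynomial modulo $p^k$. Given $F(t) = \sum_{l\geq 0} C_l t^l \in \Athat$, the convergence $\lim_{l\to\infty} C_l = 0$ in the $p$-adic topology on $A$ provides an index $m$ with $C_l \in p^kA$ for all $l > m$; write $C_l = p^k D_l$ with $D_l \in A$ for such $l$. Because $p$ is a non-zero-divisor, $C_l \in p^N A$ with $N \geq k$ forces $D_l \in p^{N-k}A$, so $D_l \to 0$ and hence $G(t) := \sum_{l > m} D_l t^l \in \Athat$. Then $F(t) = \big(\sum_{l=0}^m C_l t^l\big) + p^k G(t) \in A[t] + p^k\Athat$, as claimed.

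Next I would check $A[t] \cap p^k\Athat = p^kA[t]$: the inclusion $\supseteq$ is immediate, and for $\subseteq$, if $P(t) \in A[t]$ satisfies $P(t) = p^k F(t)$ with $F(t) = \sum_l C_l t^l \in \Athat$, then comparing coefficients in all degrees exceeding $\deg P$ gives $p^k C_l = 0$, hence $C_l = 0$ there, so $F(t) \in A[t]$ and $P(t) \in p^kA[t]$. Combining the two steps, the ring inclusion $A[t] \hookrightarrow \Athat$ descends to a ring isomorphism $A_k[t] = A[t]/p^kA[t] \cong \Athat/p^k\Athat$, surjective by the first step and injective by the second. The analogous statement for $A_k$ in place of $A$ is immediate, since $\hhat{A_k[t]} = A_k[t]$ by definition. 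The only point demanding any care is the division-by-$p^k$ bookkeeping in the first step — verifying that the tail coefficients can legitimately be divided by $p^k$ while remaining a null sequence — and this rests entirely on $p$ being a non-zero-divisor in $\M_n(\bZ_p)$; there is no genuinely hard step.
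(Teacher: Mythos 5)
Your proof is correct and rests on the same core observation as the paper's one-line argument, namely that $\lim_{l\to\infty} C_l = 0$ forces all but finitely many coefficients to vanish modulo $p^k$. The paper simply states this and treats the identification as immediate, whereas you package it via the decomposition $\Athat = A[t] + p^k\Athat$ together with $A[t]\cap p^k\Athat = p^kA[t]$ and the second isomorphism theorem, and you additionally verify the small point the paper leaves implicit — that the tail coefficients, once divided by $p^k$ (using that $p$ is a central non-zero-divisor in $\M_n(\bZ_p)$), still form a null sequence, so that $p^k\Athat$ really is the full kernel. This is a more careful rendering of the same idea rather than a genuinely different route.
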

\begin{proof}
This is simply because for any element $\sum_{l=0}^\infty C_l t^l \in \Athat$, we must have $\lim_{l \to \infty} C_l = 0$ with respect to the $p$-adic topology, so only finitely many $C_l$ are nonzero mod $p^k$. 
\end{proof}

\subsection{Uniqueness for Theorems \ref{thm:weierstrass} and \ref{thm:finweierstrass}} We now prove the uniqueness parts of Theorems \ref{thm:weierstrass} and \ref{thm:finweierstrass}:
\begin{proof}[Proofs of the uniqueness statements in Theorems \ref{thm:weierstrass} and \ref{thm:finweierstrass}] We first prove the uniqueness statement in Theorem \ref{thm:weierstrass}. Say
\begin{align*}
(X + I_nt + p t^2 M(t)) U_1(t) &= X_1' + I_n t + p t^2 N(t) \text{ and}\\
(X + I_nt + p t^2 M(t)) U_2(t) &= X_2' + I_n t + p t^2 N(t)
\end{align*}
are two expressions with $U_1(t), U_2(t)\in \Athat$ and $X_1',X_2'\in A$. Then
\begin{equation*}
(X + I_n t + p t^2 M(t)) f(t)=Y,
\end{equation*}
where $f(t) := U_1(t) - U_2(t)\in \Athat$ and $Y :=X_1' - X_2'\in A$. 

\hspace{3mm} We need to show that $f(t)=0$. To do so, it suffices to show $f(t) \equiv 0 \ppmod{p^k}$ for every $k \in \bZ_{\geq 0}$. We proceed by induction on $k$. The base case $k=0$ is vacuously true, and we assume $f(t)\equiv 0 \pmod{p^{k}}$ for arbitrary $k \in \bZ_{\geq 0}$. Reducing modulo $p^{k+1}$, we have
\begin{equation*}
Y = (X + I_n t+ p t^2 M(t)) f(t)\equiv (X + I_n t) f(t) \ppmod{p^{k+1}}.
\end{equation*}

For contradiction, suppose $f(t)\nequiv 0 \pmod{p^{k+1}}$. By Lemma \ref{lem:terminate}, the above identity can be considered in the \emph{polynomial ring} $(A/p^{k+1}A)[t] = A_{k+1}[t]$. In particular, $\bbar f(t):=f(t) \bmod p^{k+1}$ has a highest degree term because it is nonzero by assumption. Since the highest degree coefficient of $X+ I_n t$ is $I_n = 1_A$, which is not a zero divisor in $A$, the product $(X + I_n t) f(t)$ cannot be a constant modulo $p^{k+1}$. This contradicts with $(X +  I_n t)f(t)\equiv Y\ppmod{p^{k+1}}$, which completes the proof of the uniqueness statement of Theorem \ref{thm:weierstrass}.

\hspace{3mm} The proof of the uniqueness statement of Theorem \ref{thm:finweierstrass} is almost identical, so we omit it.
\end{proof}

\subsection{Proofs of final assertions in Theorems \ref{thm:weierstrass} and \ref{thm:finweierstrass}} Here, we prove that in either the setting of Theorem \ref{thm:weierstrass} or that of Theorem \ref{thm:finweierstrass}, if $U(t)$ and $X'$ in the statement exist, then they must satisfy $U(t)\equiv I_n \ppmod p$ and $X'\equiv X\ppmod p$. 

\begin{proof}[Proofs of final assertions in Theorems \ref{thm:weierstrass} and \ref{thm:finweierstrass}] We first assume Theorem \ref{thm:weierstrass} except its final assertion. Reducing \eqref{eq:preparation} modulo $p$ and using Lemma \ref{lem:terminate}, we have
    \begin{equation*}
    (\bar X + I_nt) \bar U(t) = \bar X' + I_n t \in (A/pA)[t],
    \end{equation*}
    where $\bar{X}$ denotes the reduction of $X$ modulo $p$ and similarly for $\bar X'$ and $\bar U(t)$. By comparing the highest degree terms of both sides, the only possibility for the above identity to hold in $(A/pA)[t]$ is when $\bar U(t) = I_n$. It then follows that $\bar X'=\bar X$.
    
  \hspace{3mm} The proof of the final assertion in Theorems \ref{thm:finweierstrass} is identical, so we omit it.
\end{proof}

\subsection{Proof of existence statements in Theorems \ref{thm:weierstrass} and \ref{thm:finweierstrass}}
Here, we prove the existence statements in Theorems \ref{thm:weierstrass} and \ref{thm:finweierstrass}. As is suggested by Example \ref{exmp:division}, our approach to constructing $U(t)$ and $X'$ is to perform a recursive algorithm and take the limit of the process. To be more systematic than the computations given in Example \ref{exmp:division}, we utilize the following division algorithm by the series $g(t):=X + I_n t+ p t^2 M(t)$.

\begin{lem}\label{lem:division} Fix $M(t)\in \Athat$. Define $g(t):=X + I_n t+ p t^2 M(t)$ and let $f(t)$ be any element of $\Athat$. Then there exist $q(t)\in \Athat$ and $r\in A$ such that
    \begin{equation*}
        f(t)=g(t) q(t)+r.
    \end{equation*}
\end{lem}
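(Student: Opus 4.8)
The plan is to construct $q(t)$ and $r$ by $p$-adic successive approximation, whose only nontrivial ingredient is ordinary one-sided polynomial division by the degree-one polynomial $X + I_n t$, carried out after reducing modulo $p$.

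\emph{Base step.} By Lemma~\ref{lem:terminate} we may reduce modulo $p$ to land in the polynomial ring $A_1[t] = (A/pA)[t]$, and there $g(t) = X + I_n t + p t^2 M(t)$ becomes $\bar g(t) = X + I_n t$, whose degree-one coefficient is $I_n = 1_A$. Hence for any $F(t) \in \Athat$ whose reduction $\bar F(t) \in A_1[t]$ has degree $d$ and top coefficient $\bar F_d$, I can run the usual division from the top: subtract $\bar g(t)\cdot(\bar F_d t^{d-1})$ to cancel the degree-$d$ term — this only multiplies $\bar g$ on the left of a monomial, so noncommutativity is harmless, and the leading coefficient $1_A$ means no inversion is needed — and iterate downward to obtain $\bar q(t) \in A_1[t]$ and $\bar r \in A_1$ with $\bar F = \bar g\,\bar q + \bar r$. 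Lifting $\bar q,\bar r$ arbitrarily to $q_0(t) \in A[t] \subseteq \Athat$ and $r_0 \in A$, and using Lemma~\ref{lem:terminate} once more, we get $F(t) - g(t) q_0(t) - r_0 \in p\Athat$.

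\emph{Iteration and limit.} Suppose we have already produced $q^{[k]}(t) \in \Athat$ and $r^{[k]} \in A$ with $f(t) - g(t) q^{[k]}(t) - r^{[k]} = p^k \phi(t)$ for some $\phi(t) \in \Athat$. Applying the base step to $\phi(t)$ yields $\psi(t) \in \Athat$ and $\sigma \in A$ with $\phi - g\psi - \sigma \in p\Athat$; then $q^{[k+1]} := q^{[k]} + p^k\psi$ and $r^{[k+1]} := r^{[k]} + p^k\sigma$ satisfy $f - g q^{[k+1]} - r^{[k+1]} \in p^{k+1}\Athat$, with $q^{[k+1]} \equiv q^{[k]}$ and $r^{[k+1]} \equiv r^{[k]} \pmod{p^k}$. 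The sequences $(q^{[k]})$ and $(r^{[k]})$ are therefore Cauchy for the $p$-adic topology, so $q^{[k]} \to q(t) \in \Athat$ by Lemma~\ref{com} and $r^{[k]} \to r \in A$ by completeness of $A$. Since $q - q^{[k]} \in p^k\Athat$, $r - r^{[k]} \in p^k A$, and multiplication by $g(t)$ preserves $p^k\Athat$, we get $f - gq - r \in p^k\Athat$ for every $k$; as an element of $\Athat$ lies in $p^k\Athat$ iff each of its coefficients lies in $p^k A$ and $\bigcap_k p^k A = 0$ by Properties~\ref{setting:weierstrass}(1), we conclude $f - gq - r \in \bigcap_k p^k\Athat = 0$, i.e. $f = gq + r$.

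I do not expect a genuine obstacle here: the whole argument is routine once set up. The only place needing a little care is the base step — verifying that reduction modulo $p$ really turns $g(t)$ into an honest polynomial with unit leading coefficient, so that the (one-sided, noncommutative) division terminates — together with keeping the convergence bookkeeping in the limit step straight. An alternative that avoids the modular reduction entirely is to divide $F$ by $\bar g(t) = X + I_n t$ directly over $A$ via the explicit (convergent) formulas $r = \sum_{j\ge 0}(-X)^j C_j$ and $q(t) = \sum_{l\ge 0}\bigl(\sum_{i\ge 0}(-X)^i C_{l+1+i}\bigr)t^l$ for $F(t) = \sum_l C_l t^l$, and then to absorb the perturbation $p t^2 M(t) = g(t) - \bar g(t)$ by the same $p$-adic iteration (each pass gains one power of $p$); I find the modular version cleaner, but either works.
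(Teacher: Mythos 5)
Your proof is correct, and it takes a genuinely different route from the paper's. The paper sets $q_1 = 0$, $r_1 = f$, and iterates $q_{j+1} = q_j + s_j(t)/t$, $r_{j+1} = r_j - g(t)\,s_j(t)/t$ with $s_j := r_j - r_j(0)$; this is the greedy ``long division from the top'' that Example~\ref{exmp:division} illustrates. Its convergence is \emph{not} established one power of $p$ at a time: after one step the nonconstant residual $-Xs_j/t - pt M(t)s_j$ neither drops degree uniformly nor gains a full factor of $p$. The paper handles this by introducing the mixed valuation $\delta(at^b) = v_k(a) - \eps b$ with $0 < \eps \leq 1/D$ and proving $\delta(s_j)$ increases by at least $\eps$ per step, so it eventually exceeds the maximal finite value $k-1$ in $A_k[t]$. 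You instead run a Hensel/Weierstrass-style successive approximation: reduce modulo $p$, where $g$ collapses to the degree-one monic $X + I_nt$, perform ordinary finite polynomial division there (Lemma~\ref{lem:terminate} guarantees the reduced series is an honest polynomial), lift, and divide the error by $p$ to bootstrap. Each pass gains a full power of $p$, so $(q^{[k]})$ and $(r^{[k]})$ are Cauchy on the nose and convergence follows directly from Lemma~\ref{com}. Your version is closer to the textbook proof of Weierstrass preparation and sidesteps the $\delta$-valuation bookkeeping entirely; the paper's version stays closer to the explicit algorithm one would actually run in $\Athat$ (or $A_k[t]$) and explains post hoc why the na\"{\i}ve process of Example~\ref{exmp:division} converges. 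Both hinge on the same two facts — that $g(t) \bmod p$ has degree one with invertible leading coefficient $I_n$, and that $\Athat$ is $p$-adically complete — and both are valid proofs of the lemma.
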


\hspace{3mm} Before proving Lemma \ref{lem:division}, we show why it would resolve Theorems \ref{thm:weierstrass} and \ref{thm:finweierstrass}, which would finish the proof of Theorem \ref{main}.

\begin{proof}[Proof of the existence statement of Theorems \ref{thm:weierstrass} and \ref{thm:finweierstrass} assuming Lemma \ref{lem:division}]
    Construct $q(t)$ and $r$ using Lemma \ref{lem:division} with $f(t) := I_n t + p t^2 N(t)$. Letting $U(t)=q(t)$ and $X'=-r$, this proves Theorem \ref{thm:weierstrass}. For Theorem \ref{thm:finweierstrass}, we reduce the statement of Lemma \ref{lem:division} modulo $p^k$ and then repeat the proof.
\end{proof}

\hspace{3mm} Hence, it remains to show Lemma \ref{lem:division} to prove Theorem \ref{main}. Given $f(t)$ and $g(t)$ as in Lemma \ref{lem:division}, we describe an algorithm to construct sequences $(q_j(t))_{j \geq 1}$ and $(r_j(t))_{j \geq 1}$, and prove that they converge to the desired elements $q(t)$ and $r$, respectively. More precisely, we prove the following lemma, which is stronger than Lemma \ref{lem:division}.

\begin{lem} Assume the hypotheses of Lemma \ref{lem:division}. Define $q_1(t) := 0$ and $r_1(t) := f(t)$ and recursively construct $q_j(t)$ and $r_j(t)$ for $j \geq 1$ by
    \begin{equation}\label{eq:division_algorithm}
    \left\{
    \begin{array}{l}
    q_{j+1}(t)=q_j(t)+\dfrac{s_j(t)}{t},\\
    r_{j+1}(t)=r_j(t) - g(t) \dfrac{s_j(t)}{t},
    \end{array}
    \right.
    \end{equation}
    where $s_j(t) := r_j(t) - r_j(0)$, which is the sum of all nonconstant terms of $r_j(t)$. Then both $(q_j(t))_{j \in \bZ_{\geq 1}}$ and $(r_j(t))_{j \in \bZ_{\geq 1}}$ converge $p$-adically in $\Athat$. Moreover, if $q(t) := \lim_{j\to \infty} q_j(t)$ and $r(t) := \lim_{j \to \infty} r_j(t)$, then $r(t) = r \in A$ and $f(t) = g(t)q(t)+r$.
\end{lem}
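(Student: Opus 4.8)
The plan is to exhibit the invariant preserved by the recursion, reduce both convergence claims to the single statement that $s_j(t)\to 0$ in $\Athat$, and then prove that statement by an induction on $p$-adic precision.

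First I would record the invariant $f(t)=g(t)q_j(t)+r_j(t)$, valid for all $j\geq 1$: it holds for $j=1$ since $q_1=0$ and $r_1=f$, and the update rules \eqref{eq:division_algorithm} are built precisely so that $g(t)q_{j+1}(t)+r_{j+1}(t)=g(t)q_j(t)+r_j(t)$. In particular $q_{j+1}(t)-q_j(t)=s_j(t)/t$ and $r_{j+1}(t)-r_j(t)=-g(t)\,s_j(t)/t$, both legitimate elements of $\Athat$ because $s_j(t)$ has vanishing constant term. Since $\Athat$ is complete (Lemma \ref{com}) and carries an ultrametric $p$-adic topology, a sequence whose consecutive differences tend to $0$ is automatically Cauchy, hence convergent. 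So everything reduces to proving $s_j(t)\to 0$ in $\Athat$: granting this, $q_{j+1}-q_j=s_j/t\to 0$ and $r_{j+1}-r_j=-g(t)(s_j/t)\to 0$ (multiplication by the fixed $g(t)$ is continuous), so $q(t):=\lim q_j$ and $r(t):=\lim r_j$ exist in $\Athat$, and letting $j\to\infty$ in the invariant gives $f(t)=g(t)q(t)+r(t)$. Finally, because the constant-term map $\Athat\to A$ is continuous, $r_j(0)\to r(0)$, so $r(t)-r(0)=\lim_j(r_j(t)-r_j(0))=\lim_j s_j(t)=0$; hence $r(t)=r(0)=:r\in A$, as required.

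To analyze the tails, set $w_j(t):=s_j(t)/t\in\Athat$ and let $L\colon\Athat\to\Athat$ be the additive shift operator $L(F)(t):=(F(t)-F(0))/t$, so that the $t^l$-coefficient of $L(F)$ is the $t^{l+1}$-coefficient of $F$. Substituting $g(t)=X+I_nt+pt^2M(t)$ into the recursion and using $r_j(t)-s_j(t)=r_j(0)$, a short computation gives $r_{j+1}(t)=r_j(0)-Xw_j(t)-pt^2M(t)w_j(t)$; subtracting off the constant term and dividing by $t$ yields the clean exact recursion
\[
w_{j+1}(t)=-X\,L(w_j)(t)-p\,t\,M(t)\,w_j(t).
\]
So the whole algorithm is governed by one additive operator $\Theta$ on $\Athat$, namely $\Theta(F):=-X\,L(F)-p\,t\,M(t)\,F$, and it remains to show $\Theta^{m}(w_1)\to 0$ as $m\to\infty$.

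This last step is the crux, and the main obstacle. The operator $\Theta$ is non-expansive for the $p$-adic norm but \emph{not} a contraction --- the summand $ptM(t)F$ gains a factor of $p$, yet the shift $L$ does not decrease the norm (take $F=t^l$) --- so the conclusion cannot come from a fixed-point argument and must be forced by the defining property of $\Athat$. I would prove, by induction on $k\geq 0$, that there is an index $J_k$ with $w_j\equiv 0\pmod{p^k}$ for all $j\geq J_k$ (the case $k=0$ being vacuous). Given $J_{k-1}$, for $j\geq J_{k-1}$ we have $w_j\equiv 0\pmod{p^{k-1}}$, so $ptM(t)w_j\equiv 0\pmod{p^k}$ and hence $w_{j+1}\equiv -X\,L(w_j)\pmod{p^k}$; iterating, $w_{J_{k-1}+m}\equiv(-1)^mX^mL^m(w_{J_{k-1}})\pmod{p^k}$, whose $t^l$-coefficient is $(-1)^mX^m$ times the $t^{l+m}$-coefficient of $w_{J_{k-1}}$. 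Since $w_{J_{k-1}}\in\Athat$, its coefficients tend $p$-adically to $0$, so there is $m_0$ with every coefficient of $w_{J_{k-1}}$ of degree $\geq m_0$ lying in $p^kA$; then $w_j\equiv 0\pmod{p^k}$ for all $j\geq J_k:=J_{k-1}+m_0$, completing the induction. Therefore $w_j\to 0$, hence $s_j(t)=t\,w_j(t)\to 0$ in $\Athat$, and by the reduction above the proof is complete. (The finite assertion, Theorem \ref{thm:finweierstrass}, follows by running the same algorithm modulo $p^k$, where it terminates in finitely many steps.)
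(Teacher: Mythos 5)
Your proof is correct, and it takes a genuinely different route from the paper's. The setup is the same: record the invariant $f(t)=g(t)q_j(t)+r_j(t)$, observe that completeness of $\Athat$ together with the ultrametric property reduces everything to $s_j(t)\to 0$, and extract $r\in A$ at the end. Where you diverge is in proving $s_j\to 0$. The paper works in $A_k[t]=\Athat/p^k\Athat$ (via Lemma \ref{lem:terminate}, so $g(t)$ has some finite degree $D$ there), picks a small $\epsilon\leq 1/D$, and defines a hybrid valuation $\delta(at^b)=v_k(a)-\epsilon b$ that strictly increases by at least $\epsilon$ with each iteration; since $\delta$ is bounded above on nonzero elements of $A_k[t]$, the sequence $s_j$ must eventually vanish modulo $p^k$. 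You instead isolate $w_j=s_j/t$, reorganize the recursion as $w_{j+1}=-X\,L(w_j)-p\,tM(t)\,w_j$ with $L$ the coefficient shift, and induct directly on the $p$-adic precision $k$: modulo $p^k$, the inductive hypothesis kills the $p$-gain term so the recursion degenerates to pure powers $(-X)^m L^m$, and the decaying-coefficient property built into $\Athat$ then forces eventual vanishing. The paper's $\delta$-trick cleverly fuses the $p$-gain and the degree shift into a single monotone quantity, making the termination one-step; your version disentangles these two mechanisms, which is arguably more transparent about why the algorithm converges (the shift eats the tail, the $p$-factor lifts precision) and avoids introducing the auxiliary real parameter $\epsilon$. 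Both arguments live modulo $p^k$ for each $k$ and both correctly use that $g$'s leading behavior in low degree is $X+I_nt$ with $I_n$ invertible; neither is circular.
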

\begin{proof}
    We note by the recursive construction \eqref{eq:division_algorithm} that we always have 
    \begin{equation}\label{eq:division_step}
        f(t)=g(t)q_j(t)+r_j(t)
    \end{equation}
    for all $j \in \bZ_{\geq 1}$. To prove the convergence of sequences $(q_j(t))_{j \in \bZ_{\geq 1}}$ and $(r_j(t))_{j \in \bZ_{\geq 1}}$ in $\Athat$, we work modulo $p^k$ for any given $k \geq 1$. We again use the notation $A_k = A/p^k A$ and note that $\Athat/p^k \Athat = A_k[t]$ by Lemma \ref{lem:terminate}. We denote by $\ol{q_j(t)}$ the image of $q_j(t)$ in $A_k[t]$, and similarly for $\ol{r_j(t)}$ and $\ol{s_j(t)}$. We claim that for any $k \in \bZ_{\geq 1}$, we have 
    \begin{equation}\label{eq:division_stabilize}
        \ol{s_j(t)} = 0 \in A_k[t] \text{ for large enough } j \geq 1.
    \end{equation}
    
\hspace{3mm} Before we prove \eqref{eq:division_stabilize}, we note that proving this claim suffices to prove the desired result. Indeed, if $\ol{s_j(t)}$ is eventually zero, then $\ol{q_j(t)}$ and $\ol{r_j(t)}$ eventually stabilizes from \eqref{eq:division_algorithm}. Since this is true for arbitrary $k \geq 1$, both $(q_j(t))_{j \in \bZ_{\geq 1}}$ and $(r_j(t))_{j \in \bZ_{\geq 1}}$ converge in $\Athat$ because $\Athat$ is $p$-adically complete by Lemma \ref{com}. We denote their limits by $q(t)$ and $r(t)$, and we have $f(t)=g(t)q(t)+r(t)$ by taking the $p$-adic limit of \eqref{eq:division_step} as $j \to \infty$. Furthermore, it follows from the definition of $s_j(t)$ that $\ol{r_j(t)} = \ol{r_j(0)} \in A_k[t]$ for large enough $j \geq 1$ given arbitrary $k$, so we must have $\lim_{j \ra \infty}(r_j(t) - r_j(0)) = 0$ in $\Athat$, which implies that
\[\lim_{j \ra \infty}r_j(0) = \lim_{j \ra \infty}(r_j(t) - (r_j(t) - r_j(0))) = r(t)\]
in $\Athat$. This implies that $r(t) \in A$.

\hspace{3mm} We now prove \eqref{eq:division_stabilize}. As we work in $A_k[t]$, we denote by $M(t), f(t), g(t), q_j(t), r_j(t), s_j(t)$ to mean their reductions modulo $p^k$. Let $D\geq 1$ be the degree of $g(t) = X + I_n t+ p t^2 M(t)$ as a \emph{polynomial} in $A_k[t]$. Fix a real number $\epsilon$ such that $0<\epsilon\leq 1/D$. For a monomial $at^b$ in $A_k[t]$ with nonzero $a \in A_k$ and $b\geq 0$, we define
    \begin{equation*}
        \delta(a t^b) := v_k(a) - \epsilon b \in \bR,
    \end{equation*}
where $v_k(a) := \max\{m \in \bZ_{\geq 0} : a \in p^m A_k\}$. Since $a \neq 0$, we have $v_k(a) \in \{0, 1, \dots, k-1\}$. For example, we have $\delta(I_n t)=-\eps$ and $\delta(a)=v_k(a)\geq 0$ for any nonzero $a \in A_k$. We also define $\dt(0) := \infty$. More generally, for any polynomial $f(t)\in A_k[t]$, we define $\delta(f)$ to be the minimal $\delta$-valuation of terms of $f(t)$. Note that $\delta(f)=\infty$ if and only if $f(t)=0$ in $A_k[t]$. Thus, our goal is to show that $\dt(s_j(t)) = \infty$ for large enough $j \geq 1$.

\hspace{3mm}  Since $p t^2 M(t) = g(t) - X - I_n t$, we see that $t^2 M(t)$ has degree at most $D$. Since there is no constant term for $p t^2 M(t)$, we have
    \begin{equation*}
        \delta(p t^2 M(t)) \geq 1 - \eps D \geq 0.
    \end{equation*}
    
\hspace{3mm} We claim $\lim_{j \to \infty} \delta(s_j(t))=\infty$. This is the crux of the entire proof. Expand \eqref{eq:division_algorithm} to get
    \begin{equation}\label{rel}
    r_{j+1}(t) = r_j(t) - s_j(t) -  X \frac{ s_j(t)}{t} - p I_n t  M(t)  s_j(t)
    \end{equation}
    and inspect the $\delta$-valuations of its terms. If we denote by $a t^b$ is a typical term of $s_j(t)$ with nonzero $a \in A_k$ and $b \geq 1$, a typical term for $s_j(t)/t$ can be described as $a t^{b-1}$. If $Xa = 0$, then $\dt(Xa t^{b-1}) = \infty$. Otherwise, we have
\[\dt(X a t^{b-1}) = v_k(Xa) - \ep(b-1) = v_k(Xa) - \ep b + \ep \geq v_k(a) - \ep b + \ep = \dt(a t^b) + \ep,\]
so we always have   
    \begin{equation*}
    \delta \lt( X \frac{ s_j(t)}{t} \rt) \geq \delta( s_j(t))+\eps.
    \end{equation*}

\hspace{3mm} We note $\delta(f_1(t)f_2(t))\geq \delta(f_1(t))+\delta(f_2(t))$ for any $f_1(t), f_2(t) \in A_k[t]$ from definition of $\dt$. Since $t M(t)$ has degree at most $D-1$ and has no constant term, we have
    \begin{equation*}
    \delta(p I_n t  M(t)  s_j(t)) \geq \delta(p I_n t  M(t)) + \delta(s_j(t)) \geq 1-(D-1)\eps+\delta( s_j(t) ) \geq \delta( s_j(t) )+\eps
    \end{equation*}
    by our assumption that $\eps \leq 1/D$.
        
\hspace{3mm} Since $r_j(t) - s_j(t) = - r_j(0)$ has only constant term, every possible nonconstant term of $r_{j+1}(t)$ in \eqref{rel} must be contributed from $X { s_j(t)}/{t}$ and $pI_n t  M(t)  s_j(t)$. Since
\[s_{j+1}(t) = r_{j+1}(t) - r_{j+1}(0) = -r_{j+1}(0) + r_j(0) -  X \frac{ s_j(t)}{t} - p I_n t  M(t)  s_j(t),\]
using the fact that $s_{j+1}(t)$ has no constant terms, we have
\begin{align*}
    \delta( s_{j+1}(t) ) &\geq  \min\lt\{\dt\lt(X \frac{ s_j(t)}{t}\rt),  \dt\lt(p I_n t  M(t)  s_j(t)\rt)\rt\} \\
    &\geq \delta( s_j(t))+\eps.
\end{align*}
    
In particular, we have $\lim_{j\to \infty} \delta( s_j(t))= \infty$, but the largest possible finite $\delta$-value in $A_k[t]$ is $k-1$: since $p^k A_k=0$, the largest possible finite $v_k(a)$ is $k-1$, so $\delta(at^b) = v_k(a) - \eps b\leq k-1$ for any nonzero monomial $at^b\in A[t]$. Hence, $\delta(s_j(t))=\infty$ for $j \gg 1$, which implies \eqref{eq:division_stabilize}. 
\end{proof}

\hspace{3mm} We are done with proving Theorem \ref{main}. For the rest of the paper, we use Theorem \ref{main} to prove the remaining parts of Theorem \ref{main2}.


\section{Reduction of Theorem \ref{main2} in terms of moments}\label{red}

\hspace{3mm} By choosing any $k \in \bZ_{\geq 1}$ such that $p^{k-1}G = 0$, Theorem \ref{main2} can be proven by proving the analogous statement we get by replacing $\bZ_p$ with $\bZ/p^k\bZ$. (The details can be found in \cite[Lemmas 2.1 and 3.1]{CY}.) Write $R := (\bZ/p^k\bZ)[t]/(P(t))$ for the rest of the paper. Fix $n \in \bZ_{\geq 1}$, and we assume that $A_n \in \M_n(\bF_p)$ is of the form \eqref{specialA}:
\[A_n = \begin{bmatrix}
J & \ast \\
0 & J'
\end{bmatrix},\]
where $J \in \M_{n- r}(\bF_p)$ and $J' \in \M_r(\bF_p)$ with $r = r_p(G)$ such that every eigenvalue of $J$ in $\ol{\bF_p}$ is not a root of $\bar{P}(t)$. We fix a finite-sized $\bF_p[t]/(\bar{P}(t))$-module $\rr$ so that $\rr \simeq_{\bF_p[t]} G/pG \simeq_{\bF_p[t]} \cok(\bar{P}(A_n))$. We introduce this notation because we may vary $G$, while the isomorphism class of $G/pG$ is fixed (as a $\bF_p[t]/(P(t))$-module). We shall write
\[\M_n(\bZ/p^k\bZ)_{A_n} := \{X \in \M_n(\bZ/p^k\bZ) : X \equiv A_n \ppmod{p}\}\]
so that 
\[\underset{X \in \M_n(\bZ/p^k\bZ)}{\Prob}(\cok(P(X)) \simeq_R G \mid X \equiv A_n \ppmod{p}) = \underset{X \in \M_n(\bZ/p^k\bZ)_{A_n}}{\Prob}(\cok(P(X)) \simeq_R G).\]
That is, we consider $\M_n(\bZ/p^k\bZ)_{A_n}$ as the sample space instead of mentioning conditional probabilities for the statement of Theorem \ref{main2} (after we replace $\bZ_p$ by $\bZ/p^k\bZ$). The \textbf{Haar measure} on $\M_n(\bZ/p^k\bZ)_{A_n}$ is defined to be the probability measure induced by the Haar measure of $\M_n(\bZ/p^k\bZ)$, which is equal to the uniform measure. If $k = 1$, the statement we get from replacing $\bZ_p$ with $\bZ/p^k\bZ$ in Theorem \ref{main2} is immediate (as $p^{k-1}G = 0$ with $k=1$ would imply $G = 0$), so we may assume $k \geq 2$ from now on. Given $X \in \M_n(\bZ/p^k\bZ)$, its $(i,j)$-entry $X_{ij}$ can be written as
\begin{equation}\label{digits2}
X_{ij} = X_{i,j,0} + X_{i,j,1} p + X_{i,j,2} p^2 + \cdots + X_{i,j,k-1} p^{k-1}
\end{equation}
with $X_{i,j,l} \in \{0, 1, 2, \dots, p-1\}$. When $X \in \M_n(R)_{A_n}$, we have $X_{i,j,0} = A_{ij}^{(n)}$ fixed, where $A_{ij}^{(n)}$ is the $(i,j)$-entry of $A_n$. Having $X \in \M_n(\bZ/p^k\bZ)_{A_n}$ follow the Haar measure is equivalent to having $X_{i,j,0} = A_{ij}^{(n)}$ and $X_{i,j,1}, X_{i,j,2}, \dots, X_{i,j,k-1}$ uniformly distributed in $\{0,1,2, \dots, p-1\}$. We work with the discrete $\sg$-algebra on $\M_n(\bZ/p^k\bZ)_{A_n}$, and we assume that $X \in \M_n(\bZ/p^k\bZ)$ has $n^{2}$ independent entries and that the entries of the bottom-right $r \times r$ submatrix of $X$ are uniformly distributed, where $r = \dim_{\bF_p}(\rr)$.

\hspace{3mm} Denote by $\Mod_A^{<\infty}$ the set of isomorphism classes of finite size $A$-modules for a given commutative ring $A$. Given $H \in \Mod_R^{<\infty}$, the \textbf{$H$-moment} of the distribution $(\cok(P(X)))_{X \in \M_n(\bZ/p^k\bZ)}$ is defined to be 
\[\underset{X \in \M_n(\bZ/p^k\bZ)_{A_n}}{\bE}(|\Sur_R(\cok(P(X)), H)|),\]
where $\Sur_R(S, T)$ means the set of surjective $R$-linear maps from $S$ to $T$ given $S, T \in \Mod_R^{<\infty}$. Sawin and Wood \cite[Lemma 6.1]{SW} noticed that the category of finite size $R$-modules is a \textbf{diamond category}, whose definition can be found in \cite[Definition 1.3]{SW}. The point of working in a diamond category is that the $H$-moments of a distribution in such a category determines the distribution, where $H$ varies in the category, as long as the $H$-moments do not ``grow too fast'' (i.e., the $H$-moments are \textbf{well-behaved} in the sense of \cite[p.4]{SW}).

\subsection{The Haar moment is independent to $n$}\label{ind} By applying Theorem \ref{main}, when $\M_n(\bZ/p^k\bZ)_{A_n}$ is given the Haar measure, the $H$-moment of the distribution $(\cok(P(X)))_{X \in \M_n(\bZ/p^k\bZ)}$ is
\begin{align*}
&\underset{X \in \M_n(\bZ/p^k\bZ)_{A_n}^{\Ha}}{\bE}(|\Sur_R(\cok(P(X)), H)|) \\
&= \sum_{M \in \Mod_R^{<\infty}} |\Sur_R(M, H)| \underset{X \in \M_n(\bZ/p^k\bZ)_{A_n}^{\Ha}}{\Prob}(\cok(P(X)) \simeq_R M) \\
&= \sum_{M \in \Mod_R^{<\infty}} |\Sur_R(M, H)| \underset{Y \in \M_n(\bZ_p)_{A_n}^{\Ha}}{\Prob}((\cok(P(Y)) \otimes_{\bZ_p} \bZ/p^k\bZ) \simeq_R M) \\
&= \sum_{M \in \Mod_R^{<\infty}} \sum_{\substack{W \in \Mod_{\bZ_p[t]/(P(t))}^{<\infty}: \\ W \otimes_{\bZ_p} \bZ/p^k\bZ \simeq_R M}} |\Sur_R(W \ot_{\bZ_p} \bZ/p^k\bZ, H)| \underset{Y \in \M_n(\bZ_p)_{A_n}^{\Ha}}{\Prob}(\cok(P(Y)) \simeq_R W) \\
&= \sum_{W \in \Mod_{\bZ_p[t]/(P(t))}^{<\infty}} |\Sur_R(W \ot_{\bZ_p} \bZ/p^k\bZ, H)| \underset{Y \in \M_n(\bZ_p)_{A_n}^{\Ha}}{\Prob}(\cok(P(Y)) \simeq_R W) \\
&= \sum_{\substack{W \in \Mod_{\bZ_p[t]/(P(t))}^{<\infty}: \\ W/pW \simeq_{\bF_p[t]} \rr}} |\Sur_R(W \ot_{\bZ_p} \bZ/p^k\bZ, H)| \underset{Y \in \M_n(\bZ_p)_{A_n}^{\Ha}}{\Prob}(\cok(P(Y)) \simeq_R W) \\
&= \sum_{\substack{W \in \Mod_{\bZ_p[t]/(P(t))}^{<\infty}: \\ W/pW \simeq_{\bF_p[t]} \rr \text{ and} \\ |\Hom_{\bZ_{p}[t]}(W, \bF_{p^{d_{j}}})| = |\Ext_{\bZ_{p}[t]/(P(t))}^{1}(W, \bF_{p^{d_{j}}})| \\ \text{for } 1 \leq j \leq l }} |\Sur_R(W \ot_{\bZ_p} \bZ/p^k\bZ, H)| \frac{|\Aut_{\bZ_p[t]}(W/pW)|\prod_{j=1}^{l}\prod_{i=1}^{u_j(\rr)}(1 - p^{-id_j})}{|\Aut_{\bZ_p[t]}(W)|}.
\end{align*}
The last sum is a convoluted expression, but we can still observe that this only depends on $p, k, P(t), \rr,$ and $H$, not depending on $A_n$ nor $n$. Since we fix $p, k, P(t),$ and $\rr$, this justifies the following notation:
\[M_H := \underset{X \in \M_n(\bZ/p^k\bZ)_{A_n}^{\Ha}}{\bE}(|\Sur_R(\cok(P(X)), H)|).\]

\subsection{The Haar moment is well-behaved}\label{well-behaved} We have
\[M_H =\underset{X \in \M_n(\bZ/p^k\bZ)_{A_n}^{\Ha}}{\bE}(|\Sur_R(\cok(P(X)), H)|) = \sum_{\substack{M \in \Mod_R^{<\infty}: \\ M/pM \simeq_{\bF_p[t]} \rr}} |\Sur_R(M, H)| \underset{X \in \M_n(\bZ/p^k\bZ)_{A_n}^{\Ha}}{\Prob}(\cok(P(X)) \simeq_R M),\]
which is bounded above by
\[\sum_{\substack{M \in \Mod_R^{<\infty}: \\ M/pM \simeq_{\bF_p[t]} \rr}} |\Hom_R(M, H)| \leq C_{\rr} |H|^{N_{\rr}}\]
for some constants $C_{\rr}, N_{\rr} > 0$ depending only on $\rr$. We explain how the last inequality holds. First, note that by Hensel's lemma, we have a factorization
\[P(t) = Q_1(t)Q_2(t) \cdots Q_l(t) \in (\bZ/p^k\bZ)[t]\]
such that each $Q_j(t)$ is a monic polynomial whose reduction modulo $p$ is $\bar{Q}_j(t) = \bar{P}_j(t)^{m_j}$ in $\bF_p[t]$. These $Q_1(t), Q_2(t), \dots, Q_l(t)$ are pairwise comaximal in $(\bZ/p^k\bZ)[t]$, so we have $R \simeq R_1 \times R_2 \times \cdots \times R_l$ as rings with $R_j := (\bZ/p^k\bZ)[t]/(Q_j(t))$ by the Chinese Remainder Theorem. If we consider any $M$ in the last summand, this necessarily implies that $M \simeq_R M_1 \times M_2 \times \cdots \times M_l$, where each $M_j$ is an $R_j$-module, and this implies 
\[\rr \simeq_{\bF_p[t]} M/pM \simeq_{\bF_p[t]} (M_1/pM_1) \times (M_2/pM_2) \times \cdots \times (M_l/pM_l).\]
Since each $R_j$ is a local ring with the maximal ideal $(p, P_j(t))$ where $P_j(t) \in (\bZ/p^k\bZ)[t]$ is a lift of $\bar{P}_j(t) \in \bF_p[t]$, Nakayama's lemma implies that $M_j$ can be generated by $|\Hom_{\bF_p[t]}(\rr, \bF_{p^{d_j}})|$ elements. Thus, taking $N_{\rr} := \sum_{j=1}^l|\Hom_{\bF_p[t]}(\rr, \bF_{p^{d_j}})|$ and $C_{\rr}$ to be the number of $M \in \Mod_{R}^{< \infty}$ such that $M/pM \simeq_{\bF_p[t]} \rr$, we establish the desired inequality.

\subsection{Reduction of Theorem \ref{main2} in terms of moments} By \cite[Corollary 6.5]{SW}, the previous subsection shows that $(M_H)_{H \in \Mod_{R}^{<\infty}}$ are well-behaved, so we may apply \cite[Theorem 1.6]{SW} to reduce the problem of showing the rest of Theorem \ref{main2} (in addition to Theorem \ref{main} that we previously established) into the problem of showing that every $H$-moment for the distribution $(\cok(P(X)))_{X \in \M_n(\bZ/p^k\bZ)_{A_n}}$ is equal to $M_H$. Thus, applying Lee's linearization trick \eqref{Lee}, proving Theorem \ref{main2} is reduced into proving the following:

\begin{thm}\label{main3} Suppose that $(\M_n(\bZ/p^k\bZ)_{A_n})_{n \in \bZ_{\geq 1}}$ are given probability measures such that each random $X \in \M_n(\bZ/p^k\bZ)$ has $n^2$ independent entries. If $A_n$ is of the form \eqref{specialA} and the entries of the bottom-right $r \times r$ submatirx of $X$ are uniformly distributed with $r = \dim_{\bF_p}(\rr)$, then
\[\underset{X \in \M_n(\bZ/p^k\bZ)_{A_n}}{\bE}(|\Sur_R(\cok_R(X - \bar{t}I_n), H)|) = M_H\]
for every $H \in \Mod_{R}^{<\infty}$.
\end{thm}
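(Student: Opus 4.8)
The plan is to compute the $H$-moment $\underset{X \in \M_n(\bZ/p^k\bZ)_{A_n}}{\bE}(|\Sur_R(\cok_R(X - \bar{t}I_n), H)|)$ by expanding $|\Sur_R(\cok_R(X-\bar t I_n), H)|$ as a sum over surjections and applying inclusion--exclusion, following the strategy that has become standard since \cite{Woo17}. Concretely, for an $R$-module $H$ we have $|\Sur_R(\cok_R(X-\bar t I_n),H)| = \sum_{\varphi} \bb{1}(\varphi \text{ well-defined and surjective})$, where $\varphi$ ranges over a suitable indexing set of candidate maps $R^n/(X-\bar tI_n)R^n \to H$; a map out of $\cok_R(X-\bar t I_n)$ is the same as a map $F: R^n \to H$ with $F(X-\bar t I_n) = 0$, i.e., $F X = \bar t F$ as maps $R^n \to H$. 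So the moment becomes $\sum_{F \in \Sur_R(R^n,H)} \underset{X}{\Prob}(FX = \bar t F)$. The next step is to analyze this probability: fixing a surjection $F: R^n \to H$, the condition $FX = \bar t F$ is a system of linear conditions on the entries of $X$, and because the entries are independent, the probability factors over columns of $X$. The key technical input is a bound, uniform in $n$ and depending only on $H$ (and $\rr$, $p$, $k$, $P$), on $\underset{X}{\Prob}(FX = \bar t F)$ in terms of how "degenerate" $F$ is, so that the contribution of all but finitely many isomorphism types of $F$ (those factoring through a small quotient) is negligible as $n \to \infty$; this is where the hypothesis that the bottom-right $r\times r$ block is Haar-random enters, guaranteeing enough randomness to kill the bad terms, exactly as in the counterexample remark after Theorem \ref{CY}.

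The main structural point is that $\underset{X}{\Prob}(FX = \bar t F)$ stabilizes as $n \to \infty$ to a value depending only on the $R$-module $H$ together with the $\bF_p[t]$-module structure $\rr$ coming from $A_n$. I would establish this by a change-of-basis argument: using $\cok(\bar P(A_n)) \simeq_{\bF_p[t]} \rr$ and the block form \eqref{specialA}, the "new randomness directions" in $X = A_n + pB$ that actually affect $\cok_R(X-\bar t I_n)$ are controlled by the part of $B$ interacting with $J'$, and the part interacting with $J$ contributes an automatically-satisfied or automatically-surjective factor because no eigenvalue of $J$ is a root of $\bar P$ (so $\bar P(J)$ is invertible). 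This reduces the stabilized moment to a purely local computation over $R_l' := (\bZ/p^k\bZ)[t]/(Q_j(t))$-type rings, matching exactly the convoluted sum defining $M_H$ in \S \ref{ind}. Rather than re-deriving that sum from scratch, the cleanest route is: (i) prove the moment $\underset{X}{\bE}(|\Sur_R(\cdots,H)|)$ has a limit as $n\to\infty$ that is independent of the chosen measure (subject to the hypotheses), via the inclusion--exclusion bound above; (ii) observe that the Haar measure is one allowed choice, for which \S \ref{ind} already computed the moment to be $M_H$ for every $n$; hence (iii) the limit equals $M_H$, and since in fact the moment is already $n$-independent for the Haar case, one argues it is $n$-independent in general by the same factorization-over-columns analysis — the per-column probabilities do not see $n$ once $F$ is surjective and $n$ is large enough relative to $H$.

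The hard part will be step (i): obtaining the uniform decay estimate on $\underset{X}{\Prob}(FX = \bar t F)$ for "large" surjections $F$ with only the bottom-right block assumed uniform and the other entries allowed arbitrary independent distributions with \emph{fixed} residue $A_n$. Unlike the not-too-concentrated setting of Wood, here each entry's reduction mod $p$ is pinned, so the standard argument that a random vector avoids a fixed proper subgroup with probability bounded away from $1$ must be run one level up, mod $p^2$ (or mod $p^j$), and only along the Haar-random block; one must check that the image of $\bar P(A_n)$ being exactly $\rr$ forces enough of the relevant linear functionals to be supported on that block. I expect this to require a careful analysis of which coordinates of $F$ can be "detected" by perturbing the free block of $X$, essentially a rank computation for the linear map $B \mapsto F(A_n + pB)$ restricted to the allowed coordinates, combined with Lemma \ref{lem:row_col_equiv}-style normalizations of $A_n$. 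Once that estimate is in hand, the tail of the inclusion--exclusion sum is controlled uniformly in $n$, the dominated-convergence/interchange-of-limits step goes through, and comparison with the Haar case (\S \ref{ind}) finishes the proof.
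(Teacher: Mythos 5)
Your proposal correctly identifies the moment framework and the sum-over-surjections expansion, as well as the structural role of the block form \eqref{specialA} (that $J-\bar t I_{n-r}$ being invertible modulo $p$ isolates the relevant randomness in the bottom-right block). But the overall strategy — prove the moment converges as $n\to\infty$ to a measure-independent limit via tail estimates and inclusion--exclusion, then compare with the Haar case — is not the route the paper takes, and more importantly it would run into a real obstruction here. The theorem asserts equality of the moment with $M_H$ for \emph{every fixed} $n$, not just asymptotically, and the paper's proof is an exact, term-by-term identity with no limits and no estimates whatsoever.

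The gap in your approach is the uniform decay estimate you flag as "the hard part." In this concentrated-residue model the entries of $X$ away from the bottom-right $r\times r$ block are allowed \emph{arbitrary} independent distributions — including deterministic ones. There is therefore no "not-too-concentrated" hypothesis to run Wood's avoidance argument on those coordinates, and the only genuinely random block has fixed size $r\times r$, independent of $n$; you cannot extract an $n$-dependent decay bound from it. So step (i) as you describe it is unlikely to close. What the paper does instead is: after restricting to $F \in \Sur_R(R^n,H)_{A_n}^\#$ (the only $F$ that contribute), it proves a \emph{deterministic} property — the invertibility of $J-\bar t I_{n-r}$ forces $F(v_1),\dots,F(v_{n-r})\in pH$ — and then observes that writing $X = A_n + pB$, the equation $F(A_n-\bar t I_n)v_j = \sum_i pB_{ij}F(v_i)$ in the first $n-r$ columns can always be solved for $F(v_1),\dots,F(v_{n-r})$ (since $(A_n-\bar t I_n)$ restricted there is a unit) regardless of the off-block entries of $B$. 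Hence $\Prob(F(X-\bar t I_n)=0)$ depends only on $F(v_{n-r+1}),\dots,F(v_n)$ and the bottom-right $r\times r$ block of $B$, which has the same (Haar) distribution in both measures. This gives $\Prob_{\mu_n}(F(X-\bar t I_n)=0)=\Prob_{\mathrm{Haar}}(F(X-\bar t I_n)=0)$ exactly, for each $F$, which is \eqref{suff}; no tail bound, no interchange of limits, and no recourse to \cite{Woo17}-style estimates is needed. I would redirect your effort toward proving that exact per-$F$ equality directly.
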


\section{Proof of Theorem \ref{main3}} 

\hspace{3mm} For the rest of the paper, we prove Theorem \ref{main3}. Fix $H \in \Mod_{R}^{<\infty}$. Denoting by $\mu_n$ the given measure on $\M_n(\bZ/p^k\bZ)_{A_n}$ and $\bb{1}(\ms{P})$ the characteristic function of a property $\ms{P}$, we have
\begin{align*}
\underset{X \in \M_n(\bZ/p^k\bZ)_{A_n}}{\bE}(|\Sur_R(\cok_R(X - \bar{t}I_n), H)|) &= \int_{X \in \M_n(\bZ/p^k\bZ)_{A_n}} |\Sur_R(\cok_R(X - \bar{t}I_n), H)| d \mu_n \\
&= \int_{X \in \M_n(\bZ/p^k\bZ)_{A_n}} \sum_{\bar{F} \in \Sur_R(\cok_R(X - \bar{t}I_n), H)} 1 d \mu_n \\
&= \int_{X \in \M_n(\bZ/p^k\bZ)_{A_n}} \sum_{F \in \Sur_R(R^n, H)} \bb{1}(F(X - \bar{t}I_n) = 0) d \mu_n \\
&= \sum_{F \in \Sur_R(R^n, H)}\underset{X \in \M_n(\bZ/p^k\bZ)_{A_n}}\Prob(F(X - \bar{t}I_n) = 0).
\end{align*}

\hspace{3mm} We first note that for many $F \in \Sur_R(R^n, H)$, the summand in the last sum is $0$. We have
\begin{align*}
\underset{X \in \M_n(\bZ/p^k\bZ)_{A_n}}{\Prob}(F(X - \bar{t}I_n) = 0) &= \underset{B \in \M_n(\bZ/p^k\bZ)}{\Prob}(F(A_n + pB - \bar{t}I_n) = 0) \\
&= \underset{B \in \M_n(\bZ/p^k\bZ)}{\Prob}(pFB = -F(A_n - \bar{t}I_n)),
\end{align*}
where the entries of $B \in \M_n(\bZ/p^k\bZ)$ are independent and the entries in the bottom-right $r \times r$ submatrix of $B$ are uniformly distributed, where $r = \dim_{\bF_p}(\rr)$. We note that the above probability is $0$ when the image of $F(A_n - \bar{t}I_n)$ is not in $pH$. We shall identify
\[\Hom_R(R^n, pH) = \{\phi \in \Hom_R(R^n, H) : \im(\phi) \sub pH\}.\] 

\begin{notn}\label{resdef} From now on, we write
\bi
	\item $\Hom_{R}(R^n, H)_{A_n} := \{F \in \Hom_R(R^n, H) : F(A_n - \bar{t}I_n) \in \Hom_R(R^n, pH)\}$ and
	\item $\Sur_{R}(R^n, H)_{A_n} := \{F \in \Sur_R(R^n, H) : F(A_n - \bar{t}I_n) \in \Hom_R(R^n, pH)\}$.
\ei
\end{notn}

Moreover, we also note that the condition $F(X - \bar{t}I_n) = 0$ implies that $F(\bar{t}v) = F(Xv) \in F((\bZ/p^k\bZ)^{n})$ for any $v \in (\bZ/p^k\bZ)^n$. In particular, for any such $F$, we have $F((\bZ/p^k\bZ)^{n}) = F(R^n)$.

\begin{notn}\label{resdef2} We write
\bi
	\item $\Hom_{R}(R^n, H)_{A_n}^{\#} := \{F \in \Hom_R(R^n, H)_{A_n} : F((\bZ/p^k\bZ)^n) = F(R^n)\}$ and
	\item $\Sur_{R}(R^n, H)_{A_n}^{\#} := \{F \in \Hom_R(R^n, H)_{A_n}^{\#} : F \text{ is surjective}\}$.
\ei
\end{notn}

We note that to show Theorem \ref{main3}, it suffices to show
\begin{equation}\label{suff}
\sum_{F \in \Sur_{R}(R^n, H)_{A_n}^{\#}}\lt(\underset{X \in \M_n(\bZ/p^k\bZ)_{A_n}}\Prob(F(X - \bar{t}I_n) = 0) - \underset{X \in \M_n(\bZ/p^k\bZ)_{A_n}^{\Ha}}\Prob(F(X - \bar{t}I_n) = 0)\rt) = 0.
\end{equation}

\hspace{3mm} The following lemma counts $\#\Sur_{R}(R^n, H)_{A_n}$, which is an upper bound of $\#\Sur_{R}(R^n, H)_{A_n}^{\#}$.

\begin{lem}\label{res} We have
\be
	\item $\#\Hom_{R}(R^n, H)_{A_n} = \#\Hom_{R}(\rr, H/pH) |pH|^n$ and
	\item $\#\Sur_{R}(R^n, H)_{A_n} = \#\Sur_{R}(\rr, H/pH) |pH|^n$.
\ee
\end{lem}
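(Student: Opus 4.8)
The plan is to reduce everything to a statement about $R$-linear maps modulo $p$. Identify $\Hom_R(R^n,H)$ with $H^n$ via $F\mapsto(F(e_1),\dots,F(e_n))$, where $e_1,\dots,e_n$ is the standard basis of $R^n$. Coordinatewise reduction modulo $pH$ then gives a surjection $\rho\colon \Hom_R(R^n,H)\tra \Hom_R((R/pR)^n,H/pH)$ all of whose fibers have cardinality $|pH|^n$. The key observation is a reinterpretation of the defining condition of $\Hom_R(R^n,H)_{A_n}$: for any $F\in\Hom_R(R^n,H)$, the composite $R^n\xrightarrow{A_n-\bar{t}I_n}R^n\xrightarrow{F}H\to H/pH$ kills $pR^n$ (since $p$ annihilates $H/pH$), hence factors through $(R/pR)^n$ and equals $\bar F\circ\overline{(A_n-\bar{t}I_n)}$, where $\bar F:=\rho(F)$ and $\overline{(A_n-\bar{t}I_n)}$ is left multiplication on $(R/pR)^n$. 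Thus $F\in\Hom_R(R^n,H)_{A_n}$ if and only if $\bar F$ kills the image of $\overline{(A_n-\bar{t}I_n)}$, i.e. if and only if $\bar F$ factors through the canonical surjection $\pi\colon (R/pR)^n\tra \cok_{R/pR}(\overline{A_n-\bar{t}I_n})$.

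Next I would invoke the $\bZ/p\bZ$-version of Lee's linearization trick \eqref{Lee} together with the standing hypothesis $\cok(\bar{P}(A_n))\simeq_{\bF_p[t]}\rr$ to conclude that $\cok_{R/pR}(\overline{A_n-\bar{t}I_n})\simeq_R\rr$ as $R$-modules (equivalently, as $\bF_p[t]/(\bar{P}(t))$-modules). Fixing such an isomorphism, $\pi$ becomes a surjection $(R/pR)^n\tra\rr$, so $\tilde\psi\mapsto\tilde\psi\circ\pi$ is a bijection from $\Hom_R(\rr,H/pH)$ onto $\{\bar F : F\in\Hom_R(R^n,H)_{A_n}\}$. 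Composing with the $|pH|^n$-to-one map $\rho$ gives $\#\Hom_R(R^n,H)_{A_n}=|pH|^n\cdot\#\Hom_R(\rr,H/pH)$, which is part (1).

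For part (2), I would first show that $F$ is surjective if and only if $\bar F=\rho(F)$ is. Indeed $\cok_R(F)$ is a finitely generated $R$-module with $\cok_R(F)/p\cok_R(F)\simeq_R\cok_R(\bar F)$, and $p$ is nilpotent in $R$ (as $p^k=0$), hence lies in the Jacobson radical of the Artinian ring $R$; Nakayama's lemma then gives $\cok_R(F)=0$ if and only if $\cok_R(\bar F)=0$. Since $\pi$ is surjective, $\bar F=\tilde\psi\circ\pi$ is surjective if and only if $\tilde\psi$ is. In particular surjectivity of $F$ depends only on $\bar F$, so each fiber of $\rho$ lies entirely inside or entirely outside the surjective locus, and the bijection of part (1) restricts to one matching $\Sur_R(R^n,H)_{A_n}$ (modulo the $|pH|^n$ fibers) with $\Sur_R(\rr,H/pH)$, giving part (2).

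The argument is mostly bookkeeping; the single substantive input is the identification $\cok_{R/pR}(\overline{A_n-\bar{t}I_n})\simeq_R\rr$, which is exactly where Lee's linearization \eqref{Lee} is needed, since it pins down the $\bar{t}$-action and thereby connects the linearized cokernel to $\cok(\bar{P}(A_n))$. The Nakayama step is routine once one notes that $p$ is nilpotent in $R$, hence in its Jacobson radical.
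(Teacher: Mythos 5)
Your proposal is correct and follows essentially the same route as the paper: reduce modulo $p$ to reinterpret the condition $F(A_n-\bar tI_n)\in\Hom_R(R^n,pH)$ as $\bar F\circ\overline{(A_n-\bar tI_n)}=0$, identify $\cok_{R/pR}(\overline{A_n-\bar tI_n})\simeq_{\bF_p[t]}\rr$ via Lee's trick, and count fibers of size $|pH|^n$. Your argument merely spells out two steps the paper leaves implicit, namely the factorization through the cokernel and the Nakayama-based equivalence of surjectivity of $F$ with that of $\bar F$.
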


\begin{proof} Write $Y := A_n - \bar{t}I_n \in \M_n(R)$ and denote by $\bar{Y} \in \M_n(R/pR)$ the reduction of $Y$ modulo $p$. For any $F \in \Hom_R(R^n, H)$, denoting by $\bar{F}$ its reduction modulo $p$, we see that $FY \in \Hom_R(R^n, pH)$ if and only if $\bar{F}\bar{Y} = 0 \in \Hom_{R/pR}((R/pR)^n, H/pH)$. Since $\rr \simeq_{\bF_p[t]} \cok(\bar{P}(A_n)) \simeq_{\bF_p[t]} \cok(A_n - \bar{t}I_n) = \cok(\bar{Y})$, the number of $\bar{F}$ such that $\bar{F}\bar{Y} = 0$ is 
\[\#\Hom_{R}(\cok(\bar{Y}), H/pH) = \#\Hom_{R/pR}(\rr, H/pH).\]
Since the size of each fiber under the modulo $p$ projection
\[\Hom_{R}(R^n, H) \tra \Hom_{R/pR}((R/pR)^n, H/pH)\]
is $\#\Hom_R(R^n, pH) = |pH|^n$, this finishes the proof of (1). The same proof works for (2) because $F$ is surjective if and only if $\bar{F}$ is.
\end{proof}

\begin{notn} From now on, we write $V := R^n$ and $V' := (\bZ/p^k\bZ)^n$ for convenience although both expressions do depend on $n$. We write $v_1, \dots, v_n$ to mean the standard $R$-basis for $V$. The same notation also means the standard $\bZ/p^k\bZ$-basis for $V'$.
\end{notn}

\subsection{Deterministic property of each $F$ and proof of Theorem \ref{main2}} We fix any $F \in \Sur_{R}(R^n, H)_{A_n}^{\#}$. Recall that $F$ satisfies $F(V') = F(V) = H$. Denoting by $\bar{F} : (R/pR)^n \ra H/pH$ the surjective map induced by $F$, we also note that its restriction $\bF_p^n \ra H/pH$ is a surjective $\bF_p$-linear map. We denote by $h := r_p(H)$ the $\bF_p$-dimension of $H/pH$. We may assume that $r = \dim_{\bF_p}(\rr) \geq h$ because otherwise \eqref{suff} holds trivially. Recall that $A_n$ is of the form \eqref{specialA}, and since $J \in \M_{n-r}(\bF_p)$ does not have any eigenvalues that are roots of $P(t)$ over $\ol{\bF_p}$, we know that $J - \bar{t}I_{n-r} \in \M_{n-r}(\bF_p[t]/(\bar{P}(t)))$ is invertible because its image over $\bF_p[t]/(\bar{P}_j(t))$ is invertible for all $1 \leq j \leq l$. Since $\bar{F}(A_n - \bar{t}I_n) = 0$, due to the form \eqref{specialA}, we must have $\bar{F}|_{(R/pR)^{n-r}}(J - \bar{t}I_{n-r}) = 0$, so the invertibility of $J - \bar{t}I_{n-r}$ implies that $\bar{F}|_{(R/pR)^{n-r}} = 0$, which is equivalent to saying that $F(v_1), \dots, F(v_{n-r}) \in pH$. Applying Nakayama's lemma, this implies that $F(v_{n-r+1}), \dots, F(v_n)$ generate $H$.

\begin{proof}[Proof of Theorem \ref{main2}]  We may consider a random matrix $X \in \M_n(\bZ/p^k\bZ)_{A_n}$ by writing $X = A_n + pB$, where $B$ is a random matrix in $\M_n(\bZ/p^k\bZ)$. Having $F(X - \bar{t}I_n) = 0$ is equivalent to $F(A - \bar{t}I_n) = pFB$, which can be seen as a system of equations
\[F(A_n - \bar{t}I_n)v_j = \sum_{i=1}^{n}pB_{ij}F(v_i),\]
for $1 \leq j \leq n$, where $B_{ij}$ is the $(i,j)$-entry of $B$. Due to the form \eqref{specialA}, we know that $(A_n - \bar{t}I_n)v_1, \dots, (A_n - \bar{t}I_n)v_{n-r}$ form an $R$-basis for $R^{n-r}$, so choosing values for $F(v_1), \dots, F(v_{n-r})$ is equivalent to choosing values of $F(A_n - \bar{t}I_n)v_1, \dots, F(A_n - \bar{t}I_n)v_{n-r}$. We may rewrite each equation as 
\[F(A_n - \bar{t}I_n)v_j - \sum_{i=1}^{n-r}pB_{ij}F(v_i)= \sum_{i=n-r+1}^{n}pB_{ij}F(v_i),\]
so considering $1 \leq j \leq n-r$, we see that any choice of $F(v_{n-r+1}), \dots, F(v_n) \in H$ and the entries of $B$ that are not in the bottom-right $r \times r$ submatrix of $B$ determine $F(v_1), \dots, F(v_{n-r}) \in pH$. We also note that such choices of entries of $B$ have no constraints. Hence, we see that the probability that $F(X - \bar{t}I_n) = 0$ is completely determined by the values of $F(v_{n-r+1}), \dots, F(v_n)$ and the entries of $r \times r$ bottom-right submatrix of $B$. This implies that we have
\[\underset{X \in \M_n(\bZ/p^k\bZ)_{A_n}}\Prob(F(X - \bar{t}I_n) = 0) = \underset{X \in \M_n(\bZ/p^k\bZ)_{A_n}^{\Ha}}\Prob(F(X - \bar{t}I_n) = 0),\]
so we must have \eqref{suff}, which implies Theorem \ref{main2}.
\end{proof}

\section*{Acknowledgments}

\hspace{3mm} We thank Nathan Kaplan for helpful discussions and comments on an earlier draft of this paper. We thank Rohan Das, Christopher Qiu, and Shiqiao Zhang for sharing some computer generated data relevant to the paper. We thank Melanie Matchett Wood for helpful advice for the last part of this paper. The first author received support from NSF grant DMS 2154223 for the project. The second author thanks the AMS-Simons Travel Grant for supporting his visit to the first author. The first author thanks Jungin Lee, Youn-Seo Choi, and the Korea Institute for Advanced Study for their hospitality during his visit to the institute, thanks Myungjun Yu and Yeonsei University for their hospitality during his visit to the university, and also thanks Peter Jaehyun Cho and Ulsan National Institute of Science and Technology for their hospitality during his visit to a workshop, where a part of this work was completed.

\newpage


\begin{thebibliography}{999999}

\bibitem[BKLPR2015]{BKLPR}
M. Bhargava, D. M. Kane, H. W. Lenstra, B. Poonen, and E. Rains,
\emph{Modeling the distribution of ranks, Selmer groups, and Shafarevich--Tate groups of elliptic curves}, Cambridge Journal of Mathematics \textbf{3} (2015), no. 3, 275-321.

\bibitem[CH2021]{CH}
G. Cheong and Y.Huang,
\emph{Cohen--Lenstra distributions via random matrices over complete discrete valuation rings with finite residue fields}, Illinois Journal of Mathematics \textbf{65} (2021), no. 2, 385-415.

\bibitem[CK2022]{CK}
G. Cheong and N. Kaplan,
\emph{Generalizations of results of Friedman and Washington on cokernels of random $p$-adic matrices}, Journal of Algebra \textbf{604} (2022), 636-663.

\bibitem[CKLPW2015]{CKLPW}
J. Clancy, N. Kaplan, T. Leake, S. Payne, and M. M. Wood,
\emph{On a Cohen--Lenstra heuristic for Jacobians of random graphs}, Journal of Algebraic Combinatorics \textbf{42} (2015), 701-723.

\bibitem[CLS2023]{CLS}
G. Cheong, Y. Liang, and Michael Strand,
\emph{Polynomial equations for matrices over integers modulo a prime power and the cokernel of a random matrix}, Linear Algebra and its Applications \textbf{677} (2023), 1-30.

\bibitem[CY2023+]{CY}
G. Cheong and M. Yu,
\emph{The distribution of the cokernel of a polynomial evaluated at a random integral matrix}, preprint available at \url{https://arxiv.org/abs/2303.09125}

\bibitem[CL1983]{CL}
H. Cohen and H. W. Lenstra, Jr.,
\emph{Heuristics on class groups of number fields}, Proceedings of the Journees Arithmetiques held at Noordwijkerhout, the Netherlands, July 11-15, 1983, Lecture
Notes in Mathematics \textbf{1068} (1983), Springer-Verlag, New York, 33--62.

\bibitem[CT]{CT}
T. M. Cover and J. A. Thomas,
\emph{Elements of information theory} (Second Edition), John Wiley \& Sons, Inc. \textbf{65} (2006).

\bibitem[E]{E}
D. Eisenbud,
\emph{Commutative Algebra with a view toward algebraic geometry}, Springer-Verlag.

\bibitem[FW1987]{FW}
E. Friedman and L. Washington,
\emph{Divisor class groups of curves over a finite field}, Th\'eorie des Nombres (Quebec, PQ, 1987), de Gruyter, Berlin (1989), 227-239.

\bibitem[Hoc]{Hoc}
M. Hochster,
\emph{Introduction to Commutative Algebra}, lecture notes available at \url{http://www.math.lsa.umich.edu/~hochster/614F20/614Lx.pdf}

\bibitem[Lee2022A]{LeeA}
J. Lee,
\emph{Joint distribution of the cokernels of random $p$-adic matrices}, Forum Mathematicum, \textbf{35} (2023), no. 4, 1005–1020.

\bibitem[Lee2022B]{LeeB}
J. Lee,
\emph{Universality of the cokernels of random $p$-adic Hermitian matrices}, to appear in Transactions of the American Mathematical Society.

\bibitem[NV2022+]{NV}
H. H. Nguyen and R. Van Peski,
\emph{Universality for cokernels of random matrix products}, preprint available at \url{https://arxiv.org/abs/2209.14957}

\bibitem[NW2022+]{NW22+}
H. H. Nguyen and M. M. Wood,
\emph{Local and global universality of random matrix cokernels}, preprint available at \url{https://arxiv.org/abs/2210.08526}

\bibitem[NW2022]{NW22}
H. H. Nguyen and M. M. Wood,
\emph{Random integral matrices: universality of surjectivity and the cokernel}, Inventiones mathematicae \textbf{228} (2022), 1-76.

\bibitem[SW2022+]{SW}
W. Sawin and M. M. Wood,
\emph{The moment problem for random objects in a category}, preprint available at \url{https://arxiv.org/abs/2210.06279}

\bibitem[Van2023]{Van}
R. Van Peski,
\emph{Hall--Littlewood polynomials, boundaries, and $p$-adic random matrices}, International Mathematics Research Notices \textbf{13} (2023), 11217-11275.

\bibitem[VE2010]{VE}
A. Venkatesh and J. S. Ellenberg,
\emph{Statistics of number fields and function fields}, Proceedings of the International Congress of Mathematicians 2010, 383-402.

\bibitem[Woo2017]{Woo17}
M. M. Wood,
\emph{The distribution of sandpile groups of random graphs}, Journal of the American Mathematical Society \textbf{30} (2017), 915-958.

\bibitem[Woo2019]{Woo19}
M. M. Wood,
\emph{Random integral matrices and the Cohen--Lenstra heuristics}, American Journal of Mathematics \textbf{141} (2019), 383-398.

\bibitem[Woo2022]{Woo22}
M. M. Wood,
\emph{Probability theory for random groups arising in number theory}, ICM lecture notes available at \url{https://arxiv.org/abs/2301.09687}

\bibitem[Yan2023]{Yan}
E. Yan,
\emph{Universality for cokernels of Dedekind domain valued random matrices}, preprint, available at \url{https://arxiv.org/pdf/2301.09196.pdf}
\end{thebibliography}
\end{document}